\newcounter{citedtheorems}
\newcounter{theoremcounter}
\newtheorem{defn}[theoremcounter]{Definition}
\newtheorem{theorem}[theoremcounter]{Theorem}
\newtheorem*{theorem-m}{Theorem \ref{main-theorem}}
\newtheorem*{thm-m}{Main Theorem}
\newtheorem{thm-star}[theoremcounter]{Theorem} 
\newtheorem*{theorem-x}{Theorem}
\newtheorem*{theorem-abs1}{Theorem \ref{ind-theorem}}
\newtheorem*{theorem-abs2}{Theorem \ref{a23}}
\newtheorem*{theorem-abs3}{Theorem \ref{ind-new}}
\newtheorem*{theorem-abs4}{Theorem \ref{m1}}
\newtheorem{main-claim}[theoremcounter]{Main Claim}
\newtheorem{thm-lit}[citedtheorems]{Theorem}
\newtheorem{defn-lit}[citedtheorems]{Definition}
\newtheorem{fact-lit}[citedtheorems]{Fact}
\newtheorem{fact}[theoremcounter]{Fact}
\newtheorem{cor}[theoremcounter]{Corollary}
\newtheorem{defn-claim}[theoremcounter]{Definition/Claim}
\newtheorem{mlem}[theoremcounter]{Meta-lemma}
\newtheorem{summary}[theoremcounter]{Summary}
\newtheorem{concl}[theoremcounter]{Conclusion}
\newtheorem{conv}[theoremcounter]{Convention}
\newtheorem{claim}[theoremcounter]{Claim}
\newtheorem{subclaim}[theoremcounter]{Subclaim}
\newtheorem{lemma}[theoremcounter]{Lemma}
\newtheorem{obs}[theoremcounter]{Observation}
\newtheorem{rmk}[theoremcounter]{Remark}
\newtheorem{ntn}[theoremcounter]{Notation}
\newtheorem{disc}[theoremcounter]{Discussion}
\newtheorem{expl}[theoremcounter]{Example}
\newtheorem{qst}[theoremcounter]{Question}
\newtheorem{hyp}[theoremcounter]{Hypothesis}
\newcommand{\tfeq}{ T_{\operatorname{feq}} }
\newcommand{\br}{\vspace{3mm}}
\newcommand{\vsbr}{\vspace{1mm}}
\newcommand{\tcb}{ } 
\newcommand{\mkfin}{\mk_{\operatorname{fin}}}
\newcommand{\ml}{\mathcal{L}}
\newcommand{\tlf}{\trianglelefteq}
\newcommand{\cf}{\operatorname{cof}}
\newcommand{\dom}{\operatorname{Dom}}
\newcommand{\mcf}{\mathcal{F}}
\newcommand{\lgn}{\ell} 
\newcommand{\lex}{<_{\operatorname{lex}}}
\newcommand{\tpqf}{\operatorname{tp}_{\operatorname{qf}}}
\newcommand{\upk}{\Upsilon_{\mk}}
\newcommand{\qftp}{\operatorname{tp}_{\operatorname{qf}}}
\newcommand{\tp}{\operatorname{tp}}
\newcommand{\up}{\Upsilon}
\newcommand{\mct}{\mathcal{T}}
\newcommand{\ii}{\mathbf{i}}
\newcommand{\de}{\mathcal{D}}
\newcommand{\ts}{\mathbf{S}}
\newcommand{\trv}{\mathbf{t}} 
\newcommand{\rstr}{\upharpoonright}
\newcommand{\range}{\operatorname{range}}
\newcommand{\vp}{\varphi}
\newcommand{\ma}{\mathbf{a}}
\newcommand{\mb}{\mathbf{b}}
\newcommand{\trg}{T_{\mathbf{rg}}}
\newcommand{\tdlo}{T_{\mathbf{dlo}}}
\newcommand{\GEM}{\operatorname{GEM}}
\newcommand{{\xw}}{\mathbf{w}}
\newcommand{\xr}{\mathfrak{r}}
\newcommand{\inc}{\operatorname{inc}}
\newcommand{\mk}{\mathcal{K}}
\newcommand{\dl}{<_{\operatorname{dlo}}}
\newcommand{\lequ}{\leq_{\Upsilon}}
\title{A new look at interpretability and saturation}
\author{M. Malliaris and S. Shelah} 
\thanks{\emph{Thanks:}   
Malliaris was partially supported by NSF 1553653, and at IAS by NSF 1128155 and a Minerva research foundation membership. 
Shelah was partially supported by European Research Council grant 338821.  Both authors thank 
NSF grant 1362974 (Rutgers) and ERC 338821. 
This is paper 1124 in Shelah's list.}
\address{Department of Mathematics, University of Chicago, 5734 S. University Avenue, Chicago, IL 60637, USA}
\email{mem@math.uchicago.edu}
\address{Einstein Institute of Mathematics, Edmond J. Safra Campus, Givat Ram, The Hebrew
University of Jerusalem, Jerusalem, 91904, Israel, and Department of Mathematics,
Hill Center - Busch Campus, Rutgers, The State University of New Jersey, 110
Frelinghuysen Road, Piscataway, NJ 08854-8019 USA}
\email{shelah@math.huji.ac.il}
\urladdr{http://shelah.logic.at}
\begin{document}

\begin{abstract} 
We investigate the interpretability ordering $\tlf^*$ using generalized Ehrenfeucht-Mostowski models. This
gives a new approach to proving inequalities and investigating the structure of types. 
\end{abstract}

\maketitle

$T_0 \tlf^*_\kappa T_1$ in the interpretability order if, for sufficiently large regular $\lambda$, there is some $T_*$ which interprets 
both theories and which has the property that for any $\kappa$-saturated model $M_* \models T_*$, if the reduct of $M_*$ to $\tau(T_1)$ is 
$\lambda$-saturated, then so is the reduct to $\tau(T_0)$. 
It was introduced in the mid-90s as a potential 
help to the study of Keisler's order $\tlf$, which is defined via saturation of regular ultrapowers. 

Encouraged by our recent characterization of the maximal class in $\tlf^*$ under instances of GCH \cite{MiSh:1051} building on 
\cite{DzSh:692} and \cite{ShUs:844}, here we look further at $\tlf^*$. We prove a series of results about its structure,  
focusing on results which may give us insight into the structure of unstable theories, especially simple unstable theories.  
We prove  
the theory of the random graph is minimum among the unstable theories,
 and prove $\tfeq$ is minimum among the non-simple theories. 
We prove directly, i.e. without appealing to Keisler's order, that the theory of the random graph is not maximal. 
Finally, we prove directly that for any simple theory $T_0$ and any non-simple theory $T_1$, $T_1$ is not below $T_0$. To quote 
Keisler's order for this result would require assuming existence of an uncountable supercompact cardinal, so here both the proof and the 
theorem are new.  \tcb{(As indicated, $\tlf^*$ is often given with cardinal subscripts: as we'll explain in \S\ref{s:triangle-star}, 
here our main focus is $\kappa = 1$.) 

The proofs of the two separation results depend on sharpening the tool of Ehrenfeucht-Mostowski models so as to allow for a 
certain relative measurement of types. We plan to study this further in a companion manuscript in preparation. 

This paper has benefitted from very helpful comments of N. Ramsey, the members of the Berkeley model theory seminar, F. Parente, W. Boney,   
and the anonymous referee.

\setcounter{tocdepth}{1}
\tableofcontents

\section{What is the interpretability order $\tlf^*$?} \label{s:triangle-star}
\setcounter{theoremcounter}{0}

The interpretability order $\tlf^*$ was introduced in Shelah 1996 \cite{Sh:500} as a weakening of Keisler's order $\tlf$ \cite{keisler}. 
It was then studied in several subsequent papers, notably D\v{z}amonja and Shelah 2004 \cite{DzSh:692} and Shelah and Usvyatsov 2008 \cite{ShUs:844}.   In this section we will define $\tlf^*$,  
following \cite{DzSh:692}, and in the next section we will record what was known. All theories are complete. 

\begin{defn} \emph{(Interpretations, c.f. \cite{DzSh:692} 1.1)} \label{star1} 
Let $T_0$ and $T_*$ be complete first-order theories. Suppose that
\[ \overline{\vp} = \langle \vp_R(\overline{x}_R) : ~\mbox{$R$ a predicate or function symbol of $\tau(T_0)$, or ~~$=$}~~ \rangle \]
is such that each $\vp_R(\overline{x}_r) \in \tau(T_*)$. 

\noindent
(a) For any model $M_* \models T_*$, we define the model 
$N = {M_*}^{[\overline{\vp}]}$ as follows:
\begin{itemize}
\item $N$ is a $\tau(T_0)$-structure
\item $\dom(N) = \{ a : M_* \models \vp_{=}(a,a) \} \subseteq M_*$
\item for each predicate symbol $R$ of $\tau(T_0)$, $R^N = \{ \overline{a} : M_* \models \vp_R[\overline{a}] \}$
\item for each function symbol $f$ of $\tau(T_0)$ and each $b \in N$, 
$N \models$ ``$f(\overline{a}) = b$'' iff $M_* \models \vp_f(\overline{a}, b)$, and $M_* \models$ ``$\vp_f(\overline{a}, b) \land 
\vp_f(\overline{a}, c) \implies b = c$''. \footnote{This clause allows us to restrict to vocabularies with only relation symbols.}
\end{itemize}
(b) We call $\overline{\vp}$ an \emph{interpretation} of $T_0$ in $T_*$ if:
\begin{itemize}
\item each $\vp_R(\overline{x}_r) \in \tau(T_*)$ 
\item for any model $M_* \models T_*$, we have that ${M_*}^{[\overline{\vp}]} \models T_0$
\end{itemize}
(c) ``$T_*$ interprets $T_0$'' means: there exists $\overline{\vp}$ which is an interpretation of $T_0$ in $T_*$.  
\end{defn}

\tcb{In the definition of $\tlf^*$, note there are naturally three parameters: the amount of saturation to be transferred,
the size of the interpreting theory $T_*$, and a base level of saturation required for models of $T_*$ before we 
ask about transfer of saturation.  These are denoted by $\lambda, \chi, \kappa$ respectively. In the present paper, we focus
on the cardinal $\kappa$, so (following a suggestion of the referee) we have given the main definition \ref{d:trs} with only 
this one cardinal, for clarity.} In the present paper, the theories will be countable, but this isn't necessary. 

\begin{ntn}
In Definition $\ref{d:trs}$ $\kappa$ is $1$ or an infinite cardinal. In that context, ``for every $1$-saturated model'' means simply: for every model. \footnote{F. Parente has suggested using subscript ``$0$'' instead of ``$1$'' for full symmetry of notation.}  
\end{ntn}

\begin{defn} \emph{(The interpretability order $\tlf^*$, c.f \cite{DzSh:692} 1.2 and \cite{Sh:500} 2.10)} \label{d:trs} 

\noindent Let $T_0$ and $T_1$ be complete first-order theories, and let $\kappa$ be $1$ or an infinite cardinal. 
\[ T_0 \tlf^*_\kappa T_1 \]
means that for all large enough regular $\lambda$, there exists $T_*$ of cardinality $\leq |T_0| + |T_1|$ such that
\begin{enumerate}
\item $T_*$ interprets $T_0$ via $\overline{\vp}_0$, and $T_*$ interprets $T_1$ via $\overline{\vp}_1$, and 
without loss of generality the signatures of the two interpretations are disjoint.  
\item For every $\kappa$-saturated model $M_* \models T_*$, \emph{if} ${M_*}^{[\overline{\vp_1}]}$ is $\lambda$-saturated, then 
${M_*}^{[\overline{\vp_0}]}$ is $\lambda$-saturated. 
\end{enumerate}

\end{defn}

\begin{disc} \label{3-cardinals}
For compatibility with earlier papers and occasional full generality here, we include the original definition: 
 $T_0 \tlf^*_{\lambda, \chi, \kappa} T_1$  when there exists   
$T_*$ of cardinality $\leq |T_0| + |T_1| + \chi$ such that (a) $T_*$ interprets $T_0$ via $\overline{\vp}_0$, and $T_*$ interprets $T_1$ via $\overline{\vp}_1$, and without loss of generality the signatures of the two interpretations are disjoint, and (b) for every $\kappa$-saturated model $M_* \models T_*$, \emph{if} ${M_*}^{[\overline{\vp_1}]}$ is $\lambda$-saturated, then 
${M_*}^{[\overline{\vp_0}]}$ is $\lambda$-saturated.  Note that from the definition we have immediately that 
if $T_0 \tlf^*_{\lambda, \chi, \kappa} T_1$ and $\chi^\prime \geq \chi$, $\kappa^\prime \geq \kappa$ then $T_0 \tlf^*_{\lambda, \chi^\prime, \kappa^\prime} T_1$. 
\end{disc}

{Regarding $\kappa$, we will focus here on countable theories, and our investigations here show that the two cases $\kappa = 1$ and $\kappa = \aleph_1$ are 
interesting for different reasons; these might be called the superstable and stable case, respectively. The case $\kappa = \aleph_1$ 
retains a stronger analogy to regular ultrapowers, whereas the case $\kappa = 1$ allows for the introduction of powerful 
techniques from EM models, and will be our focus here. However, future investigations may illuminate other aspects, and so 
to allow for easy quotation, we have written $\kappa$ throughout the paper.}  

\begin{obs}
If $T_0 \tlf^*_{\kappa} T_1$ and $\kappa^\prime \geq \kappa$ then $T_0 \tlf^*_{\kappa^\prime} T_1$.
\end{obs}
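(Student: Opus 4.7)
The plan is to prove this directly from the definition by observing that higher saturation is a stronger hypothesis. Fix large enough regular $\lambda$ witnessing $T_0 \tlf^*_\kappa T_1$, and let $T_*$ of cardinality $\leq |T_0| + |T_1|$, together with interpretations $\overline{\vp}_0$ and $\overline{\vp}_1$ in disjoint signatures, be the accompanying witnesses. I claim the \emph{same} $T_*$, $\overline{\vp}_0$, $\overline{\vp}_1$ witness $T_0 \tlf^*_{\kappa'} T_1$ at $\lambda$.

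The only clause that mentions $\kappa$ is clause (2) of Definition \ref{d:trs}, so the first step is to check that replacing ``$\kappa$-saturated'' by ``$\kappa'$-saturated'' weakens the universal quantifier. This is immediate: if $\kappa' \geq \kappa$, then any $\kappa'$-saturated model $M_* \models T_*$ is a fortiori $\kappa$-saturated. (In the edge case $\kappa = 1$, per the notation convention, ``$1$-saturated'' means simply ``a model,'' and any $\kappa'$-saturated structure is in particular a model, so monotonicity still holds.) Thus the collection of $\kappa'$-saturated models of $T_*$ is a subcollection of the $\kappa$-saturated models of $T_*$.

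Second, the transfer-of-saturation implication in clause (2) is a universally quantified statement: for every such model, the implication ``if the $T_1$-reduct is $\lambda$-saturated, then the $T_0$-reduct is $\lambda$-saturated'' holds. A universally quantified statement that holds over a collection automatically holds over any subcollection, so the $\kappa$-version implies the $\kappa'$-version. Doing this uniformly for every large enough regular $\lambda$ yields $T_0 \tlf^*_{\kappa'} T_1$.

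I do not expect any real obstacle: this observation is purely a matter of unwinding the definition of $\tlf^*_\kappa$ and using the trivial monotonicity of saturation in its cardinal parameter; no model-theoretic content beyond the definition is required, which is why the paper labels it an Observation rather than a Lemma.
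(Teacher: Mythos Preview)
Your argument is correct and is exactly the reasoning the paper intends: the paper states this as an Observation with no proof (and in Discussion~\ref{3-cardinals} remarks that the analogous monotonicity is ``immediate from the definition''), since a $\kappa'$-saturated model is $\kappa$-saturated and clause~(2) is a universal statement over such models. There is nothing to add.
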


\begin{cor} \label{corp3}
\tcb{In particular, if $T_0 \tlf^*_1 T_1$, then $T_0 \tlf^*_{\aleph_1} T_1$, and if $\neg (T_0 \tlf^*_{\aleph_1} T_1)$, then 
$\neg (T_0 \tlf^*_1 T_1)$.} 
\end{cor}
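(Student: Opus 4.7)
The corollary is immediate from the preceding observation, and I would just record it as such. Concretely, part (a), namely $T_0 \tlf^*_1 T_1 \implies T_0 \tlf^*_{\aleph_1} T_1$, is the special case $\kappa = 1$, $\kappa^\prime = \aleph_1$ of the observation (noting $\aleph_1 \geq 1$ in our convention). Part (b) is then the contrapositive of part (a), so it requires no separate argument.

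For completeness, I would briefly unwind why the observation itself holds, since the corollary is essentially a pointer to it. Suppose $T_0 \tlf^*_\kappa T_1$ and $\kappa^\prime \geq \kappa$. Fix a sufficiently large regular $\lambda$ witnessing $T_0 \tlf^*_\kappa T_1$, and let $T_*$, together with interpretations $\overline{\vp}_0, \overline{\vp}_1$ (with disjoint signatures), be as in Definition \ref{d:trs}. I claim the same $T_*$ and the same pair of interpretations witness $T_0 \tlf^*_{\kappa^\prime} T_1$ at $\lambda$. Indeed, let $M_* \models T_*$ be any $\kappa^\prime$-saturated model. Since $\kappa^\prime \geq \kappa$, $M_*$ is also $\kappa$-saturated, so by choice of $T_*$, if ${M_*}^{[\overline{\vp_1}]}$ is $\lambda$-saturated then ${M_*}^{[\overline{\vp_0}]}$ is $\lambda$-saturated. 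This is exactly the required condition in Definition \ref{d:trs} for $\kappa^\prime$, so $T_0 \tlf^*_{\kappa^\prime} T_1$.

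There is no obstacle to this argument; everything is read off the definition, and the corollary is a single sentence once the observation is in hand. The only thing worth emphasizing for the reader is the asymmetric direction: strengthening the base saturation requirement on $M_*$ (from $\kappa = 1$ to $\kappa = \aleph_1$) makes the hypothesis of Definition \ref{d:trs}(2) quantify over fewer models, hence makes $\tlf^*_\kappa$ weaker (more pairs are comparable). Accordingly, non-comparability at the larger $\kappa = \aleph_1$ implies non-comparability at the smaller $\kappa = 1$, which is the content of part (b).
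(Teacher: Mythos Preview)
Your proposal is correct and matches the paper's approach exactly: the paper states this corollary with no proof, since it is immediate from the preceding observation (monotonicity in $\kappa$), and your unwinding of that observation is accurate and even more detailed than what the paper provides.
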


\noindent For easy reference, we include the following immediate translation of Definition \ref{d:trs}. 

\begin{summary} \label{s:sum} \emph{ }

\begin{enumerate}
\item \emph{To show $T_0 \tlf^*_\kappa T_1$  means to show that for all large enough regular $\lambda$, there exists 
$T_*$ of size $\leq |T_0|+ |T_1|$ 
interpreting $T_0$ via some $\bar{\vp}_0$ and $T_1$ via some $\bar{\vp}_1$,   
such that for every $M_* \models T_*$ which is $\kappa$-saturated, {if} ${M_*}^{[\overline{\vp_1}]}$ is $\lambda$-saturated then 
${M_*}^{[\overline{\vp_0}]}$ is $\lambda$-saturated.}

\item \emph{To show $\neg ( T_0 \tlf^*_\kappa T_1 )$ means to show that for arbitrarily large regular $\lambda$, for 
every $T_*$ $($of size no more than $|T_0| + |T_1|$$)$ 
interpreting $T_0$ via some $\bar{\vp}_0$ 
and $T_1$ via some $\bar{\vp}_1$, there exists some $\kappa$-saturated 
 $M_* \models T_*$ such that ${M_*}^{[\overline{\vp_1}]}$ is $\lambda$-saturated but ${M_*}^{[\overline{\vp_0}]}$ is not $\lambda$-saturated.} 
$($And clearly it suffices to show that for arbitrarily large regular $\lambda$, for every $T_*$ interpreting our two theories, there exists some extension $T_{**} \supseteq T_*$ of the 
same cardinality, e.g. with 
Skolem functions, and some $\kappa$-saturated 
 $M_* \models T_{**}$ such that ${M_*}^{[\overline{\vp_1}]}$ is $\lambda$-saturated but ${M_*}^{[\overline{\vp_0}]}$ not $\lambda$-saturated.$)$ 

\item \emph{$T_0$ and $T_1$ are $\tlf^*_\kappa$-equivalent when $T_0 \tlf^*_\kappa T_1$ and $T_1 \tlf^*_\kappa T_0$, and they are $\tlf^*_\kappa$-incomparable when 
$\neg ( T_0 \tlf^*_\kappa T_1 )$ and $\neg ( T_1 \tlf^*_\kappa T_0 )$.} 

\item \emph{$T_0 \triangleleft^* T_1$ $($i.e. strictly less than$)$ when $T_0 \tlf^* T_1$ and $\neg ( T_0 \tlf^* T_1 )$. } 

\end{enumerate}

\end{summary}
Though in many cases this hypothesis won't be necessary, our focus here will be complete countable theories, because of the connection to Keisler's order, which is stated for such theories, and makes most sense for them, see \cite{Sh:14}.

\begin{conv} \label{c:ctble}
Unless otherwise stated, in this paper all theories are complete and countable, 
so also any relevant $T_*$ is countable. 
\end{conv}

\br

\section{What was known about $\tlf^*$?}  \label{s:known}

In this section we describe the state of knowledge on $\tlf^*_1$ and $\tlf^*_{\aleph_1}$  
as work on this paper began. Not all these results were previously known, 
e.g. as recently as \cite{MiSh:1051} we didn't record the structure on the stable theories, Theorem \ref{tr:st}, or that $\tlf^*_{1}$ strictly 
refines $\tlf$, Conclusion \ref{c:refines}. 
It may be most correct to say they could have been known: the results proved in this section may be deduced with a little thought from 
results in the literature.  

\tcb{The interpretability order $\tlf^*$ refines Keisler's order $\tlf$ in a natural sense as we now explain, but because the quantification over $\lambda$ in the 
two orders is different (all sufficiently large vs. all), we will keep track of $\lambda$ in the next few claims.} 
Recall that 
Keisler's order $\tlf$ on complete countable theories sets $T_0 \tlf T_1$ if $T_0 \tlf_\lambda T_1$ for every infinite cardinal $\lambda$, 
where $T_0 \tlf_\lambda T_1$ means that for every regular 
ultrafilter $\de$ on $\lambda$, every model $M_1 \models T_0$, and every model $M_2 \models T_1$, if $(M_2)^\lambda/\de$ is $\lambda^+$-saturated, then 
$(M_1)^\lambda/\de$ is $\lambda^+$-saturated.\footnote{A discussion of this order may be found in e.g. \cite{MiSh:1030} \S 2.} 
The subscripts $i,j$ in Claim \ref{c15} are for easier quotation. 

\begin{claim} \label{c15}  
\tcb{If $\neg (T_j \tlf_\lambda T_i)$ in Keisler's order, $\neg (T_j \tlf^*_{\lambda^+, \aleph_1} T_i)$. Thus by monotonicity, $\neg (T_j \tlf^*_{\lambda^+, 1} T_i)$.} 
\end{claim}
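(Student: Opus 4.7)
The plan is to lift the witness from Keisler's order through any interpreting theory via a single regular ultrapower. Unpacking the hypothesis, $\neg(T_j \tlf_\lambda T_i)$ gives a regular ultrafilter $\de$ on $\lambda$, a model $M \models T_i$ with $M^\lambda/\de$ being $\lambda^+$-saturated, and a model $N \models T_j$ with $N^\lambda/\de$ failing to be $\lambda^+$-saturated. I will show that for any $T_*$ interpreting both theories, taking an ultrapower by this same $\de$ of an arbitrary model of $T_*$ produces an $\aleph_1$-saturated witness to $\neg(T_j \tlf^*_{\lambda^+, \aleph_1} T_i)$.

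In detail, let $T_*$ of cardinality $\leq |T_i| + |T_j|$ interpret $T_i$ via $\bar{\vp}_i$ and $T_j$ via $\bar{\vp}_j$, fix any $M_*^0 \models T_*$, and set $M_* := (M_*^0)^\lambda/\de$. Then $M_*$ is $\aleph_1$-saturated because $\de$, a regular ultrafilter on the infinite set $\lambda$, is countably incomplete. Because the interpretations are uniformly definable, a direct application of \Los' theorem identifies the $\bar{\vp}$-reducts of $M_*$ with ultrapowers of the reducts of $M_*^0$:
\[ M_*^{[\bar{\vp}_i]} \cong \bigl((M_*^0)^{[\bar{\vp}_i]}\bigr)^\lambda/\de, \qquad M_*^{[\bar{\vp}_j]} \cong \bigl((M_*^0)^{[\bar{\vp}_j]}\bigr)^\lambda/\de. \]
The crucial input is Keisler's theorem that, for a regular ultrafilter on $\lambda$, $\lambda^+$-saturation of $(P)^\lambda/\de$ depends only on $\operatorname{Th}(P)$. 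Applied to $T_i$, since $(M_*^0)^{[\bar{\vp}_i]} \equiv M$, the $\lambda^+$-saturation of $M^\lambda/\de$ transfers to $M_*^{[\bar{\vp}_i]}$. Applied to $T_j$, the failure of $\lambda^+$-saturation for $N^\lambda/\de$ transfers to $M_*^{[\bar{\vp}_j]}$. Hence $M_*$ is the required witness for this $T_*$; as $T_*$ was arbitrary, $\neg(T_j \tlf^*_{\lambda^+, \aleph_1} T_i)$ follows, and the second inequality is Corollary \ref{corp3}.

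There is essentially no serious obstacle: the three ingredients---that regular ultrafilters on infinite sets are countably incomplete, that \Los' theorem makes interpretations commute with ultrapowers, and that regular-ultrapower saturation is a theory-only invariant (Keisler)---are all standard. The conceptual content of the claim is just the observation that a single regular ultrapower construction can simultaneously govern the saturation of every uniformly interpreted reduct, so a Keisler-style witness for the ambient theories automatically lifts to a $\tlf^*$-style witness inside any interpreting $T_*$, and at no cost beyond the built-in $\aleph_1$-saturation of regular ultrapowers.
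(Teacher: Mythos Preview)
Your proof is correct and follows essentially the same route as the paper's: take the regular ultrafilter $\de$ witnessing $\neg(T_j \tlf_\lambda T_i)$, form the $\de$-ultrapower of an arbitrary model of $T_*$, observe it is $\aleph_1$-saturated, and use that interpretations commute with ultrapowers together with the theory-invariance of regular-ultrapower saturation to obtain the required witness. The paper frames this as a contradiction argument and is terser about the commutation step (writing ``ultrapowers commute with reducts''), but the content is the same.
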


\begin{proof} 
By monotonicity, it will suffice to prove this for $\kappa = \aleph_1$. 
Suppose there were a regular ultrafilter $\de$ 
on $\lambda$ so that for any $M_\ell \models T_\ell$ for $\ell = i,j$,  
$(M_i)^\lambda/\de$ is $\lambda^+$-saturated but 
$(M_j)^\lambda/\de$ is not $\lambda^+$-saturated (recall that in Keisler's order, the choice of the models $M_\ell$ matters only 
up to elementary equivalence). 
Suppose for a contradiction that there were 
$T_*$ interpreting both $T_j$ and $T_i$ such that in any model of $T_*$ which is $\aleph_1$-saturated, if the reduct to 
$\tau(T_i)$ is $\lambda^+$-saturated then so is the reduct to $\tau(T_j)$. 
Let $M_* \models T_*$ and let $N_* = {M_*}^\lambda/\de$. Because $N_*$ is a regular ultrapower, it will be $\aleph_1$-saturated. 
As ultrapowers commute with reducts, 
$N_* \rstr_{\tau(T_i)}$ will be $\lambda^+$-saturated but 
$N_* \rstr_{\tau(T_j)}$ will not be $\lambda^+$-saturated. This contradicts the assumption on $T_*$ 
and shows no such $T_*$ exists, i.e. $\neg (T_j \tlf^*_{\lambda^+,\aleph_1} T_i)$. 
\end{proof}

\begin{cor} \label{triangle-refines} 
\tcb{Let $\kappa \in \{ 1, \aleph_1\}$. 
If $T_0 \tlf^*_\kappa T_1$ 
then $T_0 \tlf_\lambda T_1$ for all sufficiently large $\lambda$.} 
\end{cor}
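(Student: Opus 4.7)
The plan is to deduce this from Claim \ref{c15} via a short chain of monotonicity and definition-unpacking, with no new content beyond what is already in the section.

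First I would reduce to the case $\kappa = \aleph_1$. By the Observation immediately preceding Corollary \ref{corp3}, $T_0 \tlf^*_\kappa T_1$ and $\kappa' \geq \kappa$ imply $T_0 \tlf^*_{\kappa'} T_1$, so whether $\kappa = 1$ or $\kappa = \aleph_1$ we always end up with $T_0 \tlf^*_{\aleph_1} T_1$. Next I would translate between the two notations for $\tlf^*$ in use here. By Definition \ref{d:trs}, $T_0 \tlf^*_{\aleph_1} T_1$ says: there is a threshold cardinal $\lambda_0$ such that for every regular $\lambda \geq \lambda_0$, there exists a $T_*$ of size $\leq |T_0|+|T_1|$ interpreting both theories and witnessing that $\lambda$-saturation of the $T_1$-reduct transfers to $\lambda$-saturation of the $T_0$-reduct inside every $\aleph_1$-saturated model of $T_*$. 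In the three-cardinal notation of Discussion \ref{3-cardinals}, this reads $T_0 \tlf^*_{\lambda, |T_0|+|T_1|, \aleph_1} T_1$ for every regular $\lambda \geq \lambda_0$.

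Finally, I would apply the contrapositive of Claim \ref{c15}. Fix any infinite $\mu \geq \lambda_0$; then $\mu^+$ is a successor, hence regular, and $\mu^+ \geq \lambda_0$. So the previous paragraph gives $T_0 \tlf^*_{\mu^+, \aleph_1} T_1$, and Claim \ref{c15} (contrapositive, with $T_j = T_0$, $T_i = T_1$, $\lambda = \mu$) yields $T_0 \tlf_\mu T_1$ in Keisler's order. Since this holds for every sufficiently large $\mu$, the corollary is proved.

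There is no substantive obstacle here; the proof is bookkeeping between the one-cardinal formulation of Definition \ref{d:trs} (which hides a quantification over sufficiently large $\lambda$) and the explicit three-cardinal formulation of Discussion \ref{3-cardinals} used in Claim \ref{c15}. The only mild subtlety is that Keisler's order is indexed by an arbitrary $\mu$ while $\tlf^*$ quantifies over regular $\lambda$; this is harmless because every successor cardinal $\mu^+$ is automatically regular, so one can always feed $\lambda := \mu^+$ into Claim \ref{c15}.
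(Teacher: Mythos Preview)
Your proof is correct and follows essentially the same route as the paper: unpack $T_0 \tlf^*_\kappa T_1$ to obtain $T_0 \tlf^*_{\lambda^+,\kappa} T_1$ for all sufficiently large $\lambda$ (using that $\lambda^+$ is regular), then apply the contrapositive of Claim~\ref{c15}. The only cosmetic difference is that you first reduce to $\kappa=\aleph_1$ by monotonicity, whereas the paper keeps $\kappa$ general and relies on Claim~\ref{c15} already covering both cases.
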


\begin{proof} 
Recall that $T_0 \tlf^*_\kappa T_1$ means that for all large enough regular $\rho$, $T_0 \tlf^*_{\rho, \kappa} T_1$.  
It follows that for all sufficiently large $\lambda$, $T_0 \tlf^*_{\lambda^+, \kappa} T_1$, so 
apply the contrapositive of Claim \ref{c15} with $j=0$, $i=1$ to conclude $T_0 \tlf_\lambda T_1$. 
 \end{proof} 

\begin{cor} \label{c16} 
\tcb{Let $\kappa \in \{ 1, \aleph_1\}$. 
If for arbitrarily large 
$\lambda$ we have $T_0 \triangleleft_\lambda T_1$ in Keisler's order, then $\neg (T_1 \tlf^*_\kappa T_0)$.} 
\end{cor}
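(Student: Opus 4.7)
The plan is essentially to chain together Claim \ref{c15} with the definition of $\tlf^*_\kappa$, which quantifies over all sufficiently large regular $\lambda$. Accordingly, a failure of $\tlf^*_{\rho,\kappa}$ at arbitrarily large regular $\rho$ should already suffice to conclude $\neg(T_1 \tlf^*_\kappa T_0)$.

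First, I would unpack the strict-inequality hypothesis. By definition, $T_0 \triangleleft_\lambda T_1$ in Keisler's order includes the clause $\neg (T_1 \tlf_\lambda T_0)$. Since this is asserted for arbitrarily large $\lambda$, we obtain a cofinal class of cardinals $\lambda$ at which $\neg (T_1 \tlf_\lambda T_0)$ holds.

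Next, for each such $\lambda$, apply Claim \ref{c15} with the indices $j=1$ and $i=0$. This delivers $\neg (T_1 \tlf^*_{\lambda^+, \aleph_1} T_0)$, and the monotonicity clause of Discussion \ref{3-cardinals} (together with Corollary \ref{corp3}) then yields $\neg (T_1 \tlf^*_{\lambda^+, \kappa} T_0)$ for $\kappa \in \{1, \aleph_1\}$.

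Finally, I would invoke the definition of $\tlf^*_\kappa$: $T_1 \tlf^*_\kappa T_0$ means $T_1 \tlf^*_{\rho, \kappa} T_0$ for all sufficiently large regular $\rho$. Contrapositively, to witness $\neg (T_1 \tlf^*_\kappa T_0)$ it is enough to produce arbitrarily large regular $\rho$ at which $\tlf^*_{\rho, \kappa}$ fails. The cardinals $\rho = \lambda^+$ supplied above are regular and cofinal in the class of cardinals, so they furnish exactly such a witness family, completing the argument. There is no real obstacle here: the substantive content is already packaged in Claim \ref{c15}, and the role of this corollary is simply to reconcile Keisler's "for all $\lambda$" formulation with the "for all sufficiently large regular $\lambda$" formulation built into $\tlf^*_\kappa$.
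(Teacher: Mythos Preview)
Your proposal is correct and follows exactly the same approach as the paper: unpack the strict inequality to obtain $\neg(T_1 \tlf_\lambda T_0)$, then apply Claim~\ref{c15} with $j=1$, $i=0$. You have simply spelled out in more detail the passage from the per-$\lambda$ conclusion to the global statement $\neg(T_1 \tlf^*_\kappa T_0)$, which the paper leaves implicit.
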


\begin{proof}
If $T_0~ \triangleleft_\lambda ~T_1$, the strict inequality means that $\neg (T_1 \tlf_\lambda T_0)$, so apply Claim \ref{c15} with $j=1$, $i=0$. 
\end{proof}

\begin{concl}[$\tlf^*$ refines $\tlf$ in a natural sense]  \label{cor-finer}
\tcb{If $T_0$ and $T_1$ are equivalent in $\tlf^*_{1}$ or in $\tlf^*_{\aleph_1}$ 
then they are equivalent in $\tlf_\lambda$ 
for all sufficiently large $\lambda$.}  
\end{concl} 

To show the equivalence relation induced by $\tlf^*$ is strictly finer than $\tlf$, we will need a few facts. 

\begin{disc} At least a priori, 
Corollary \ref{cor-finer} does not imply that the ordering on the Keisler classes must be inherited by the $\tlf^*$-classes. A priori,  
all or many of the $\tlf^*$-classes could be pairwise incomparable. 
\end{disc}

First recall that if $T_* \supseteq T$, the class $PC(T_*, T)$ is the class of reducts to $\tau(T)$ of models of $T_*$. 

\begin{fact}[\cite{Sh:c}, Theorem VI.5.3 p. 383] \label{ct-fact}
If $T$ is countable, superstable, and does not have the f.c.p., then there is 
$T_*$, $T \subseteq T_*$, $|T| = 2^{\aleph_0}$ such that $PC(T_*, T)$ is 
categorical in every cardinality $\geq 2^{\aleph_0}$.  
[The proof goes by showing one can choose $T_*$ so that the reduct to $T$ of any model of $T_*$ of 
cardinality at least continuum is saturated in its own cardinality]. 
\end{fact}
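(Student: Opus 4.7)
The plan is to follow the hint given in the statement: construct $T_*$ so that every model $M_* \models T_*$ of cardinality $\lambda \geq 2^{\aleph_0}$ has a $\tau(T)$-reduct that is saturated in its own cardinality. Since a complete first-order countable theory has at most one saturated model in each cardinality up to isomorphism, this would immediately give categoricity of $PC(T_*, T)$ in every $\lambda \geq 2^{\aleph_0}$ (the reduct has the same domain, hence the same cardinality as $M_*$).

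The structural inputs I would use are standard for countable superstable theories without f.c.p. First, superstability gives that every complete type $p \in S(A)$ is based on a finite subset $B \subseteq A$ and is determined by $\tp(B)$ together with a ``type-definition scheme'' involving only finitely many formulas. Second, the absence of f.c.p. provides uniform first-order bounds on how consistently finite approximations of a type extend, so these schemes can be captured by finitely many first-order sentences in the parameters. Collecting all such schemes produces a family $\{\sigma_i : i < 2^{\aleph_0}\}$.

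I would then take $T_*$ to consist of $T$ together with (a) Skolem functions for every formula, and (b) for each scheme $\sigma_i$, a function symbol $F_{\sigma_i}$ whose intended reading is: ``given a tuple $\bar{b}$ realizing the base type of $\sigma_i$, $F_{\sigma_i}(\bar{b})$ realizes over $\bar{b}$ the complete type determined by $\sigma_i$.'' The axioms of $T_*$ are the Skolem axioms together with, for each $\sigma_i$, the first-order statement that $F_{\sigma_i}(\bar{b})$ satisfies the defining formulas of $\sigma_i$; the no-f.c.p. hypothesis is precisely what lets these realization statements be written uniformly as first-order sentences rather than infinitary schemes. Consistency of $T_*$ follows by interpreting all the $F_{\sigma_i}$ inside any sufficiently saturated model of $T$, and completing arbitrarily yields $T_*$ with $|T_*| \leq 2^{\aleph_0}$.

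Given $M_* \models T_*$ with $|M_*| = \lambda \geq 2^{\aleph_0}$, to verify that $M_* \rstr \tau(T)$ is $\lambda$-saturated one takes $A \subseteq M_*$ with $|A| < \lambda$ and $p \in S(A)$; superstability produces a finite $B \subseteq A$ and a scheme $\sigma_i$ corresponding to $p$, and applying $F_{\sigma_i}$ to an enumeration of $B$ yields a realization of $p$ in $M_*$. The main obstacle I anticipate is the faithful packaging of the family $\{\sigma_i\}$ into a single first-order, consistent, complete $T_*$ of cardinality $2^{\aleph_0}$: this amounts to checking that the finite-base behavior coming from superstability and the uniform-consistency behavior coming from no f.c.p. can be simultaneously enforced by the symbols $F_{\sigma_i}$ across all schemes at once, without inducing any hidden first-order conflict. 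This is where both hypotheses on $T$ are essential, and it is the step where one naturally refers to the more detailed bookkeeping carried out in \cite{Sh:c}, Chapter VI.
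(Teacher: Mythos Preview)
The paper does not prove this statement; it is quoted as a Fact from \cite{Sh:c} with only the bracketed hint, so there is no in-paper proof to compare against. Your outline follows the hint, but it has a genuine gap in the saturation step.

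The problem is the sentence ``applying $F_{\sigma_i}$ to an enumeration of $B$ yields a realization of $p$ in $M_*$.'' Your axioms only guarantee that $F_{\sigma_i}(\bar{b})$ realizes the type determined by $\sigma_i$ \emph{over $\bar{b}$}; they say nothing about its type over the larger set $A$. Even though $p$ is the unique nonforking extension of $p \rstr B$ to $A$, the particular element $F_{\sigma_i}(\bar{b})$ may realize a forking extension. Concretely, take $T$ to be the theory of an equivalence relation with infinitely many infinite classes (superstable, no f.c.p.). Your $T_*$ adds only Skolem functions and finitely-based type-realization functions, so any $\aleph_0$-saturated model of $T$ can be expanded to a model of (some completion of) your axioms. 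Take such a model $M_*$ of size $\lambda$ in which one equivalence class $C_0$ has size $\aleph_0$. Then the type over the countable set $C_0$ saying ``in $C_0$ but not equal to any of its listed elements'' is omitted, so $M_* \rstr \tau(T)$ is not even $\aleph_1$-saturated.

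What your schemes actually deliver is at best $F^a_{\aleph_0}$-saturation (every strong type over a finite set is realized). The missing half is exactly the saturation characterization the paper invokes in the proof of Theorem \ref{tr:st}: for countable stable $T$, a model is $\lambda$-saturated iff it is suitably saturated over small sets \emph{and} every maximal indiscernible set has size $\geq \lambda$. The construction in \cite{Sh:c} VI.5 handles this second condition by adding function symbols that force every infinite indiscernible set to extend to one of size $|M|$; the absence of f.c.p.\ is precisely what makes ``this finite sequence extends to an infinite indiscernible set'' uniformly first-order, so that such extension functions can be axiomatized. Your anticipated obstacle (packaging the schemes consistently) is not the real difficulty; the real difficulty is that term functions of finite arity cannot by themselves force realizations over arbitrarily large parameter sets, and the indiscernible-set mechanism is what bridges that gap.
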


As an immediate corollary, proved just as in the proof of \ref{c:refines}, we have the following. 
(Recall from \ref{3-cardinals} the notation means $|T_*| \leq |T_0| + |T_1| + 2^{\aleph_0}$, and $\kappa = 1$.)
 
\begin{cor}
If $T$ is countable, superstable, and does not have the f.c.p., $T$ is $\tlf^*_{\lambda, 2^{\aleph_0}, 1}$-minimal among 
complete countable theories for every $\lambda \geq 2^{\aleph_0}$.  
\end{cor}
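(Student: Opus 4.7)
The plan is to combine the template from the preceding corollaries with Fact~\ref{ct-fact}. Fix any complete countable $T_1$; we must exhibit a single $T_{**}$ of cardinality $\leq 2^{\aleph_0}$ interpreting $T$ via some $\overline{\vp}_0$ and $T_1$ via some $\overline{\vp}_1$ in disjoint signatures, such that every model $M_{**} \models T_{**}$ whose $\tau(T_1)$-reduct is $\lambda$-saturated also has its $\tau(T)$-reduct $\lambda$-saturated. Because $\kappa = 1$, no saturation hypothesis on $M_{**}$ itself enters the definition, which is what makes this work.

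First apply Fact~\ref{ct-fact} to $T$ to obtain $T_* \supseteq T$ with $|T_*| = 2^{\aleph_0}$ whose models of cardinality $\geq 2^{\aleph_0}$ all have their $\tau(T)$-reduct saturated in their own cardinality. Then take $T_{**}$ to be $T_* \cup T_1$ over the disjoint union of signatures $\tau(T_*) \sqcup \tau(T_1)$ with no interaction axioms; this is consistent, and $|T_{**}| \leq 2^{\aleph_0}$. Let $\overline{\vp}_0$ be the natural interpretation of $T$ in $T_{**}$ arising from $\tau(T) \subseteq \tau(T_*) \subseteq \tau(T_{**})$ (taking $\vp_{=}(x,x) \equiv (x = x)$ and $\vp_R(\overline{x}) \equiv R(\overline{x})$ for each $R \in \tau(T)$; by the footnote in Definition~\ref{star1} we may restrict to relational vocabularies), and analogously define $\overline{\vp}_1$ for $T_1$; then each $M_{**}^{[\overline{\vp}_i]}$ is simply the $\tau(T_i)$-reduct of $M_{**}$.

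To verify the defining condition, suppose $M_{**} \models T_{**}$ has $M_{**}^{[\overline{\vp}_1]}$ $\lambda$-saturated. Then $|M_{**}| \geq \lambda \geq 2^{\aleph_0}$, so the $\tau(T_*)$-reduct of $M_{**}$ is a model of $T_*$ of cardinality at least $2^{\aleph_0}$; by Fact~\ref{ct-fact} its further reduct $M_{**}^{[\overline{\vp}_0]}$ is saturated in cardinality $|M_{**}|$, hence in particular $\lambda$-saturated, as required. I do not anticipate a genuine obstacle: the content sits entirely inside Fact~\ref{ct-fact}, and what remains is the standard ``place the two theories side by side in disjoint signatures'' maneuver. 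The only bookkeeping subtlety is that $|T_*| = 2^{\aleph_0}$ rather than $\aleph_0$, which is precisely why the statement is phrased in the three-cardinal form with $\chi = 2^{\aleph_0}$ (rather than as a statement about $\tlf^*_1$ tout court), and why the bound on $\lambda$ is $\lambda \geq 2^{\aleph_0}$ (so that saturation of the $\tau(T_1)$-reduct forces $|M_{**}| \geq 2^{\aleph_0}$, the threshold needed to invoke Fact~\ref{ct-fact}).
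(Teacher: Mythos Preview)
Your proof is correct and is essentially the argument the paper has in mind: the paper simply says this is ``an immediate corollary, proved just as in the proof of \ref{c:refines},'' meaning one combines Fact~\ref{ct-fact} with the interpreted theory $T_1$ and reads off that $\lambda$-saturation of the $T_1$-reduct forces cardinality $\geq 2^{\aleph_0}$, whence the $T$-reduct is saturated. One small bookkeeping point: since the paper's convention is that all theories (including the interpreting $T_*$) are complete, you should strictly take a completion of your $T_* \cup T_1$; but the implication you need to verify is preserved under completion, so this changes nothing.
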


Second, following \cite{BlGrSh:570}, and note we now return to our default value $\chi = \aleph_0$, 

\begin{defn}
We say that saturation is $(\mu, \kappa)$ transferrable in $T$ (the interesting case is $\mu < \kappa$) 
if there is an expansion $T_* \supseteq T$ 
with $|T_*| = |T|$ such that if $M$ is a $\mu$-saturated model of $T_*$ and $|M| \geq \kappa$ then the 
reduct of $M$ to $\tau(T)$ is $\kappa$-saturated. 
\end{defn} 

Using this notion of transfer of saturation, Baldwin, Grossberg, and 
Shelah characterized four classes of countable theories, one of which we'll use here. 

\begin{fact}[\cite{BlGrSh:570}, p. 11] \label{bgs-fact} {Let $\lambda$ be an uncountable cardinal and $T$ a countable theory.}
Then 
($T$ is superstable and does not have the finite cover property) $\iff$ (saturation is $(\aleph_0, \lambda)$-transferrable for $T$).
\end{fact}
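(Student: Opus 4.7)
The plan is to prove the two directions separately. For the forward implication ($T$ superstable and without f.c.p. implies $(\aleph_0, \lambda)$-transferability), I would construct an explicit expansion $T_*$ using the strong definability properties available in this setting. For the converse, I would argue by contrapositive, showing that failure of either superstability or no f.c.p. obstructs transfer via a diagonal construction.

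For the forward direction, I would exploit the fact that stability gives definability of $\varphi$-types and that no f.c.p. upgrades this to uniform definability with bounded local ranks, while superstability guarantees that every complete type is determined, in a strong sense, by a finite ``base.'' The expansion $T_*$ would add: (i) function symbols witnessing the $\varphi$-definability schemes for each formula $\varphi(\bar x, \bar y)$; (ii) Skolem-like functions that, given any finite parameter set, produce realizations of the canonical (stationary) types extending each possible finite-rank configuration. The claim is then that in any $\aleph_0$-saturated model $M \models T_*$ with $|M| \geq \lambda$, any type $p$ over $A \subseteq M$ with $|A| < \lambda$ is controlled by the $\varphi$-defining data of its $\varphi$-restrictions for each $\varphi$; $\aleph_0$-saturation of $M$ ensures that any finite piece of this data is realized by a parameter already in $M$, and the Skolem functions applied to those parameters produce a single element realizing $p$. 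The non-forking calculus of superstable $T$ guarantees the realizations cohere across formulas.

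For the reverse direction, I would split into cases. If $T$ has f.c.p., witnessed by a formula $\varphi(x, \bar y)$ with $\{\varphi(x, \bar a_n)\}_n$ $k$-consistent for every $k$ but inconsistent, I would build, for any $T_* \supseteq T$ of size $|T|$, an $\aleph_0$-saturated $M_* \models T_*$ of size $\geq \lambda$ containing an indiscernible-like sequence $(\bar a_\alpha : \alpha < \lambda)$ whose $\varphi$-type is locally consistent but not realized: $\aleph_0$-saturation cannot detect the global inconsistency coming from f.c.p. Dually, if $T$ is not superstable, a type of infinite local rank provides a sequence of parameters along which saturation in $T_*$ fails to propagate to saturation in $T$; here the key point is that $\kappa(T) > \aleph_0$ lets one find, by a standard tree construction, a type requiring uncountably many parameters to be determined, which can be omitted in an $\aleph_0$-saturated model of any countable-cardinality expansion.

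The main obstacle is the forward direction, specifically ensuring that the chosen Skolem functions actually suffice to realize arbitrary types over large sets: the correct choice of canonical (stationary, over algebraically closed ``bases'') types to target, and the proof that no f.c.p. allows us to pass from local realization to global realization uniformly in the formula, are the technical heart of the argument. A secondary difficulty is keeping $|T_*| = |T|$ while capturing enough witnesses; here one uses that, under no f.c.p., the defining schemes and rank computations for a fixed $\varphi$ involve only finitely many parameters in a bounded way, so countably many function symbols suffice for a countable $T$.
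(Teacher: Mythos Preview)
The paper does not prove this statement at all: it is stated as a \textbf{Fact} cited from \cite{BlGrSh:570} and used as a black box (for instance in the proof of Conclusion~\ref{c:refines}). There is therefore no proof in the paper to compare your proposal against.

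On the merits of your sketch itself: the forward direction is in the right spirit, though the phrase ``Skolem functions applied to those parameters produce a single element realizing $p$'' hides exactly the point that needs work, namely that a single element realizing all the $\varphi$-pieces simultaneously exists and is produced uniformly. In the reverse direction, your treatment of the f.c.p.\ case contains a genuine confusion. You describe f.c.p.\ as being witnessed by a single infinite sequence $\{\varphi(x,\bar a_n)\}_n$ which is ``$k$-consistent for every $k$ but inconsistent''; but that is just a finitely satisfiable omitted type, which exists in any non-saturated model and has nothing to do with f.c.p. The finite cover property says rather that for each $n$ there is a \emph{finite} set $\{\varphi(x,\bar a_i) : i < m_n\}$ which is $n$-consistent yet inconsistent --- the point is the failure of a uniform finite bound, not the existence of an omitted infinite type. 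The standard route (used in \cite{BlGrSh:570} and in the proof of Theorem~\ref{tr:st} above) goes through the equivalence of f.c.p.\ with the existence of a definable equivalence relation having classes of every finite size, and then shows that in any countable expansion one can build an $\aleph_0$-saturated model in which some such class is infinite but small; combined with the characterization of saturation in stable theories via sizes of maximal indiscernible sets, this blocks $\lambda$-saturation. Your proposed mechanism does not engage with this.
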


\begin{concl} \label{c:refines} 
Let $T_0, T_1$ be complete countable theories. 
If $T_0$ is superstable and does not have the finite cover property, and $T_1$ is strictly stable and does not have the finite cover property, 
then $T_0$ and $T_1$ are equivalent in Keisler's order $\tlf$ but not equivalent in $\tlf^*_{\lambda, {\aleph_0}, \aleph_0}$, 
\tcb{\emph{thus} not equivalent in $\tlf^*_1$}.

\emph{Thus}, the equivalence relation on countable theories induced by $\tlf^*_{1}$ is strictly finer than that induced by Keisler's order,  
already on the stable theories.
\end{concl}

\begin{proof}
We will prove this for $\kappa = \aleph_0$, which suffices for $\kappa = 1$ by monotonicity. 
The class of complete countable theories without the f.c.p. form an equivalence class in Keisler's order, see \cite{Sh:c} VI.5.1, so it will suffice 
to show that for $T_0, T_1$ as in the statement, $\neg (T_1 \tlf^*_{1} T_0)$. 
Suppose for a contradiction there were some theory $T_*$ interpreting both $T_0$ and 
$T_1$ witnessing that $T_1 \tlf^*_{1} T_0$ for all regular $\lambda$ 
above some $\lambda_*$.  Choose a regular $\lambda > \lambda_*$.  
There is no harm in expanding $T_*$ to $T_{**}$ witnessing that 
saturation is $(\aleph_0, \lambda)$-transferrable for $T_0$, i.e. if $M$ is an $\aleph_0$-saturated model of $T_{**}$ 
of cardinality $\geq \lambda$ then the reduct of $M$ to $\tau(T_0)$ is $\lambda$-saturated. Now, by the characterization of Fact \ref{bgs-fact}, 
there can be no theory witnessing such a transfer of saturation for $T_1$; in particular, $T_{**}$ cannot be such a theory, 
and so there must be a counterexample, namely a model $N \models T_{**}$ of cardinality $\geq \lambda$ such that $N$ 
is $\aleph_0$-saturated but the reduct of $N$ to $\tau(T_1)$ is not $\lambda$-saturated. However, since $N \models T_{**}$, its 
reduct to $\tau(T_0)$ is $\lambda$-saturated.  Since $N$ is a model of $T_*$, this contradiction completes the proof. 
\end{proof}

In the companion paper \cite{MiSh:F1692} being written we sort out the analogous cases, and will conclude there that 
$\tlf^*_1$  
has six classes on the countable stable theories, including incomparable classes. 

Returning to the case of $\tlf^*_{\aleph_1}$, on the stable theories the picture is the same 
as that given by Keisler's order.   The ordering $\tlf^*_{\aleph_1}$ does have incomparable classes on the unstable theories, however;
see below. 

\begin{theorem} \label{tr:st}
On the complete, countable, stable theories, $\tlf^*_{\aleph_1}$ has precisely two classes, those theories without the finite cover property 
and those theories which are stable but have the finite cover property. 
\end{theorem}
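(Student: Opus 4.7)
The proof plan has two parts: (I) establish that countable stable theories without the f.c.p.\ form a minimum $\tlf^*_{\aleph_1}$-class among countable stable theories, and (II) establish that countable stable theories with the f.c.p.\ form a single $\tlf^*_{\aleph_1}$-class strictly above.

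For (I), the key input is an $(\aleph_1, \lambda)$-saturation transfer result analogous to Fact \ref{bgs-fact}: for every countable stable $T$ without the f.c.p., there exists $T_* \supseteq T$ with $|T_*| = |T|$ such that every $\aleph_1$-saturated $M \models T_*$ of cardinality at least $\lambda$ has its $\tau(T)$-reduct $\lambda$-saturated. In the superstable case this is immediate from Fact \ref{bgs-fact}; in the strictly stable case this is the corresponding transfer at $\kappa = \aleph_1$ in the framework of \cite{BlGrSh:570}, using $\kappa(T) \leq \aleph_1$ together with non-f.c.p.\ to bootstrap $\kappa(T)$-saturation to saturation in one's own cardinality. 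Given $T_0$ stable non-f.c.p.\ and any countable theory $T_1$, take the disjoint union of such a $T_*$ with $T_1$; this interprets both theories. Any $\aleph_1$-saturated $M_*$ of this combined theory with $\lambda$-saturated $\tau(T_1)$-reduct has cardinality at least $\lambda$, so the $\tau(T_0)$-reduct is $\lambda$-saturated. This establishes $T_0 \tlf^*_{\aleph_1} T_1$ and hence (I).

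The strict separation asserted in (II) is immediate from Corollary \ref{c16}: since stable non-f.c.p.\ is strictly below stable f.c.p.\ in Keisler's order, no f.c.p.\ stable theory is $\tlf^*_{\aleph_1}$-below any non-f.c.p.\ stable theory; combined with (I) this gives strictness. For the equivalence of arbitrary stable f.c.p.\ theories $T_0, T_1$, the plan is to build a common $T_*$ interpreting both disjointly and adjoining Skolem-like functions that reduce type realization in each $\tau(T_i)$-reduct to realization of canonical ``f.c.p.-witness'' types, mirroring the Keisler-order analysis in \cite{Sh:c}~VI.5. By stability, realizing a type over a large set in $\tau(T_i)$ reduces to realizing types over small sets, and the f.c.p.-witnesses pinpoint the residual obstruction; if the expansion in $T_*$ uniformly links the f.c.p.-witnesses of $T_0$ and $T_1$, then $\aleph_1$-saturation of $M_*$ together with $\lambda$-saturation of its $\tau(T_1)$-reduct forces $\lambda$-saturation of the designated f.c.p.-witness types for $T_0$, and this propagates via stability to $\lambda$-saturation of the $\tau(T_0)$-reduct.

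The main obstacle is this last equivalence step in (II). Part (I) and the strict separation are direct. Equating arbitrary stable f.c.p.\ theories requires a careful design of the common $T_*$: its Skolem functions must translate f.c.p.-witness behavior in $\tau(T_0)$ into f.c.p.-witness behavior in $\tau(T_1)$ and vice versa, in a manner compatible with both stability-based type analyses. In Keisler's order this is handled through regular ultrafilter combinatorics on $\lambda$; inside $\tlf^*_{\aleph_1}$ we must use only the built-in $\aleph_1$-saturation hypothesis on $M_*$ together with a sufficiently rich Skolem layer to mimic that combinatorial content internally.
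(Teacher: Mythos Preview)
Your structure matches the paper's: part (I) via the $(\aleph_1,\lambda)$-transfer characterization of nfcp from \cite{BlGrSh:570}, the strict separation via Corollary \ref{c16}, and part (II) equivalence via an internal construction mimicking \cite{Sh:c} VI.5. Parts (I) and the separation are essentially as in the paper.

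For part (II), you correctly flag the obstacle but do not resolve it. The paper fills this gap with two concrete ingredients you are missing. First, the \emph{key background theorem}: a model of a countable stable theory is $\lambda$-saturated iff it is $\aleph_1$-saturated and every maximal indiscernible set has size at least $\lambda$. This is what reduces the entire saturation question to a single size condition. Second, the paper uses the fact that in a stable theory fcp is equivalent to having (in an imaginary sort) a definable equivalence relation $E$ with a class of size $n$ for each $n$; this is the canonical fcp-witness, not a general ``f.c.p.-witness type.'' The common $T_*$ is then built over $(\mathbb{N},<)$, interpreting $T_1$ and $T_2$ disjointly, together with a parametrized family of injections from the size-$n$ $E$-class (of $T_2$) into any definable set of size $\geq n$. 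Now if $N\models T_*$ and $N\rstr\tau(T_2)$ is $\lambda$-saturated, then every infinite $E$-class has size $\geq\lambda$; by overspill any bounded definable subset of $\mathbb{N}^N$ receives an injection from some infinite $E$-class, hence every pseudofinite set has size $\geq\lambda$. From there the argument of \cite{Sh:c} VI.5.1(2) (which you cite) goes through: combined with the $\aleph_1$-saturation hypothesis and the key background theorem, this forces $N\rstr\tau(T_1)$ to be $\lambda$-saturated. Your vague ``Skolem-like functions linking f.c.p.-witness behavior'' needs to be replaced by exactly this mechanism.
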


\begin{proof}
We adapt the proof that Keisler's order on the stable theories has precisely two classes, 
\cite{Sh:c} VI.5.1 (note that regular ultrapowers of countable theories are always $\aleph_1$-saturated).  
As there, the key background theorem says that a model of a countable stable theory is $\lambda$-saturated iff it is 
$\aleph_1$-saturated and every maximal indiscernible set has size at least $\lambda$. 
For one direction, by \cite{BlGrSh:570} Theorem 2.2, a complete countable stable $T$ has nfcp iff 
saturation is $(\aleph_1, \lambda)$-transferrable for all uncountable $\lambda$. Proceeding just as in the proof of   
\ref{c:refines}, this shows that the theories without the fcp are precisely the 
$\tlf^*_{\lambda, \aleph_0, \aleph_1}$-minimum class. 

For the other side 
it suffices to show that if $T_1, T_2$ are both stable with the fcp then 
$T_1 \tlf^*_{\aleph_1} T_2$.  Here 
we may follow \cite{Sh:c} VI.5.1(2)  p. 379, which proves that for 
$M$ a model of a countable stable theory $T$ with fcp, any regular ultrapower $M^I/\de$ is $\lambda$-saturated, where 
$\lambda$ is the minimum product of an unbounded sequence of finite 
cardinals mod $\de$. In fact, the proof there also shows that the ultrapower will not be $\lambda^+$-saturated; 
this is true here too, but it's simpler to just get two classes by quoting \ref{c16}. 
The argument will go through essentially verbatim except for one point: since we 
are not in a regular ultrapower, we'll need to justify the following: there is $T_*$ interpreting both 
$T_1$ and $T_2$ such that if $N$ is any model of $T_*$ whose reduct to $\tau(T_2)$ is $\lambda$-saturated, then in 
$N \rstr \tau(T_1)$, every pseudofinite set has size at least $\lambda$. (What does ``pseudofinite'' mean? 
We require that $T_*$ expands the theory of $(\mathbb{N}, <)$, and so here we are asking that any infinite,  
bounded subset of $\mathbb{N}^N$ which is definable with parameters in $T_*$, has size at least $\lambda$.)

Let's now justify this point. 
Recall that in a stable theory, having the fcp is equivalent to 
having (perhaps in an imaginary sort) a definable 
equivalence relation with a class of size $n$ for each $n$. 
Call this equivalence relation $E$.  Let 
$M$ be a countable model expanding $(\mathbb{N}, <)$. Expand $M$ 
so that its theory interprets $T_1$ and $T_2$, without loss of generality in disjoint signatures. Suppose finally 
that $Th(M)$ codes enough set theory, 
or number theory, so that there is a parametrized family of functions 
$F : E \times E \rightarrow \mathbb{N}$ 
witnessing that for each finite $n$ and each definable 
subset $X$ of $M$ of size $\geq n$ there is a definable injection of 
the $E$-class of size $n$ into $X$. Let $T_* = Th(M)$. 
Suppose now that $N \models T_*$, that $N \rstr \tau(T_2)$ is $\lambda$-saturated, and let $\vp(x, \bar{c})$ be a bounded, definable 
subset of $\mathbb{N}^N$. 
Then by overspill, for some $a$ in some infinite $E^N$-equivalence class, 
$F^N(a,-)$ will map $E^N(a,-)$ injectively into $\vp[N, \bar{c}]$. By our saturation hypothesis, 
$E^N(a,-)$ has size at least $\lambda$, so $\vp[N, \bar{c}]$ does too, as desired.  
\end{proof} 

In the present paper we have focused on the unstable theories, with an eye towards simple theories and Keisler's order. 

The papers \cite{DzSh:692} and \cite{ShUs:844} investigated maximality 
under $\tlf^*$. Building on the first, the second established 
that under relevant instances\footnote{The proof depends on a theorem of \cite{DzSh:692} which assumes relevant instances of GCH.} 
of GCH, any theory with $NSOP_2$ is non-maximal in $\tlf^*$.  

\begin{fact}[\cite{ShUs:844} 3.15(2)] \label{844-fact}
Assuming relevant instances of GCH, $T$ is $SOP_2$ if $T$ is maximal in $\tlf^*_{\aleph_1}$. Note that 
$SOP_2$ is an absolute property of $T$, so that is, 
$T$ is $SOP_2$ if $T$ is maximal in some universe $($e.g. some forcing extension$)$ extending 
$\mathbb{V}$.
\end{fact}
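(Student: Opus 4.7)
The plan is to prove the contrapositive: assuming the relevant GCH instance, if $T$ has $NSOP_2$ then $T$ is not $\tlf^*_{\aleph_1}$-maximal, i.e., there is some theory $T_1$ with $\neg(T_1 \tlf^*_{\aleph_1} T)$. I would take $T_1$ to be a canonical countable $SOP_2$ theory---for instance $\tfeq$, or the model-companion of the generic tree of sequences with an $SOP_2$-witnessing formula $\vp(x;y)$ and a designated branch-type.

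The bridge from the model-theoretic dichotomy $SOP_2$ vs.\ $NSOP_2$ to the interpretability order passes through regular ultrafilters. I would invoke the separating-ultrafilter machinery of \cite{DzSh:692}: under the assumed GCH instance, for each sufficiently large regular $\lambda$ there is a regular ultrafilter $\de$ on $\lambda$ such that for every $M \models T$ the ultrapower $M^\lambda/\de$ is $\lambda^+$-saturated, while for every $N \models T_1$ the ultrapower $N^\lambda/\de$ fails to be $\lambda^+$-saturated. The construction is an inductive selection of an increasing chain of filters of length $2^\lambda$: at each stage one either adds a distribution of a type whose realization is permitted by the $NSOP_2$ hypothesis (to saturate $T$) or refuses a distribution that would realize the designated branch-type in the $SOP_2$-tree of $T_1$ (to starve $T_1$). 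GCH underlies the cardinal-counting needed to enumerate both kinds of tasks and to close under Boolean combinations.

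Given such a $\de$, the transfer to $\tlf^*_{\aleph_1}$ is formal and follows the template of Claim \ref{c15}, run in the opposite direction. Suppose for contradiction that $T_1 \tlf^*_{\aleph_1} T$ is witnessed by some countable $T_*$ interpreting $T_1, T$ via disjoint $\bar{\vp}_1, \bar{\vp}_0$. For any $M_* \models T_*$, set $N_* := (M_*)^\lambda/\de$. Since $\de$ is regular, $N_*$ is $\aleph_1$-saturated; since ultrapowers commute with reducts, $N_* \rstr \tau(T)$ coincides with $(M_* \rstr \tau(T))^\lambda/\de$ and is thus $\lambda^+$-saturated, while $N_* \rstr \tau(T_1)$ is not. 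This contradicts the supposed witness, giving $\neg(T_1 \tlf^*_{\lambda^+, \aleph_1} T)$ for all sufficiently large $\lambda$, hence $\neg(T_1 \tlf^*_{\aleph_1} T)$.

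The main obstacle is the separating-ultrafilter construction in the first step: ensuring enough saturation power on the $T$-side while simultaneously starving a specific $SOP_2$-configuration on the $T_1$-side. This is the nontrivial content of \cite{DzSh:692}, handled via the \emph{peculiar-cut} technology---cuts in pseudofinite indiscernible orders of asymmetric cofinalities which are realized automatically in $NSOP_2$ theories but whose realization would force an unwanted branch through the $SOP_2$-tree. Regarding the absoluteness addendum: $SOP_2$ is a first-order combinatorial property of the formulas of $T$, hence absolute between transitive models of $\mathrm{ZFC}$; so if $T$ is maximal in some forcing extension of $\mathbb{V}$, one may pass to a further extension where the relevant GCH instance holds, apply the above to conclude $SOP_2$ there, and then pull back to $\mathbb{V}$ by absoluteness.
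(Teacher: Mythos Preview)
This statement is not proved in the paper; it is quoted as a fact from \cite{ShUs:844}. So there is no in-paper proof to compare against, but your sketch misrepresents the method of \cite{DzSh:692} and \cite{ShUs:844} in a way that matters.

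Your proposed route is through Keisler's order: build a regular ultrafilter $\de$ saturating $T$ but not the $SOP_2$ theory $T_1$, then quote Claim~\ref{c15}. But this would prove that under GCH, $NSOP_2$ implies non-maximality in \emph{Keisler's order}, which is strictly harder than the $\tlf^*$ statement and is not what \cite{DzSh:692} or \cite{ShUs:844} establish. Those papers work directly with $\tlf^*$: for each candidate interpreting theory $T_*$ they build, by a long inductive construction under GCH, an $\aleph_1$-saturated model $M_*\models T_*$ (not a regular ultrapower) whose $T$-reduct is $\lambda$-saturated and whose $T_1$-reduct is not. The $NSOP_2$ hypothesis enters as an amalgamation condition on types along trees---this is exactly what the present paper alludes to in the paragraph after Fact~\ref{maxl-fact}. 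The extra freedom of $\tlf^*$ over Keisler's order (arbitrary $\aleph_1$-saturated models rather than only regular ultrapowers) is precisely what makes the $\tlf^*$ result accessible; your route throws this advantage away. Two smaller points: $\tfeq$ has $TP_2$, not $SOP_2$---by Theorem~\ref{t:tfeq} it is $\tlf^*$-minimum among non-simple theories, so certainly not maximal---though your alternative of a generic tree is fine; and the peculiar-cut technology you invoke belongs to \cite{MiSh:998}, a later paper on Keisler's order, not to \cite{DzSh:692}. Your absoluteness remark at the end is correct.
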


Next, we quote a characterization of the maximal class under $\tlf^*$. The proof of the complementary direction to \ref{844-fact} given in 
\cite{MiSh:1051} is in ZFC and uses some ideas from the proof that $SOP_2$ is $\tlf$-maximum, from \cite{MiSh:998}.  Our proof there 
shows that $SOP_2$ suffices for $\tlf^*_1$-maximality, so a fortiori for $\tlf^*_{\aleph_1}$-maximality. 

\begin{fact}[\cite{MiSh:1051} Theorem 7.13] \label{maxl-fact} \tcb{Any theory with $SOP_2$ is $\tlf^*_1$-maximal.}
Thus, assuming relevant instances of GCH, $T$ is $\tlf^*_{\aleph_1}$-maximal if and only if it has $SOP_2$. 
\end{fact}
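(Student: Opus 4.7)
The plan is to establish the substantive half of Fact~\ref{maxl-fact}, namely that any $T_1$ with $SOP_2$ is $\tlf^*_1$-maximal; the ``only if'' clause under GCH then follows by combining with Fact~\ref{844-fact}. So fix an arbitrary countable complete $T_0$ and let $\vp(x;\bar y)$ witness $SOP_2$ in $T_1$, meaning that in sufficiently saturated models of $T_1$ one can find tree-indexed parameters $\langle \bar b_\eta : \eta \in {}^{<\omega}2 \rangle$ such that $\{\vp(x;\bar b_{\eta\rstr n}) : n<\omega\}$ is consistent along each branch while $\{\vp(x;\bar b_\eta),\vp(x;\bar b_\nu)\}$ is inconsistent whenever $\eta,\nu$ are incomparable. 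For each large enough regular $\lambda$ I need to produce one countable theory $T_*$ interpreting both $T_0$ and $T_1$ in disjoint signatures, such that any $M_* \models T_*$ whose $\tau(T_1)$-reduct is $\lambda$-saturated automatically has $\tau(T_0)$-reduct $\lambda$-saturated.

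The strategy, following and adapting \cite{MiSh:1051} (which refines the regular-ultrafilter argument of \cite{MiSh:998}), is to build $T_*$ on a richly Skolemized expansion of $\tau(T_0)\sqcup\tau(T_1)$ so as to \emph{code $T_0$-types as branches of an $SOP_2$-tree of $\vp$}. Concretely, I would arrange: \emph{(i)} a definable index set $X$ meant to parametrize finite approximations $p_s$ to $T_0$-types over tuples from ${M_*}^{[\overline{\vp_0}]}$, together with a tree order $\lex$ on $X$ reflecting refinement and incompatibility of approximations; \emph{(ii)} a definable map $F : X \to M_*^{|\bar y|}$ whose image is axiomatized to form an $SOP_2$-pattern for $\vp$, with $F$ mapping $\lex$-comparable indices to nodes on a common branch and $\lex$-incomparable indices to incomparable nodes; and \emph{(iii)} a definable Skolem ``decoder'' $G$ such that whenever $c \in M_*$ realizes the $\vp$-branch-type associated to some coherent $\lex$-chain $\langle s_n \rangle$ arising from a consistent partial $T_0$-type $p$, the element $G(c,\bar s) \in {M_*}^{[\overline{\vp_0}]}$ realizes $p$. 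Because $T_*$ is built to include Skolem functions for all these operations, everything is first-order expressible and $T_*$ remains countable.

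For the verification, suppose $M_* \models T_*$ has ${M_*}^{[\overline{\vp_1}]}$ $\lambda$-saturated and let $q(x)$ be a $T_0$-type over some $A \subseteq {M_*}^{[\overline{\vp_0}]}$ with $|A|<\lambda$. Use the coding apparatus to pick a $\lex$-chain $\langle s_n \rangle$ of indices whose associated approximations exhaust $q$; by axiom~\emph{(ii)} the set $\{\vp(x; F(s_n)) : n<\omega\}$ extends to a finitely consistent branch type over the small parameter set involved, hence is realized by some $c$ in ${M_*}^{[\overline{\vp_1}]}$ by $\lambda$-saturation; applying $G$ produces a realization of $q$ in ${M_*}^{[\overline{\vp_0}]}$. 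The central obstacle is step~\emph{(ii)}: arranging for the coding to be uniformly first-order definable in a single countable $T_*$, rich enough that \emph{every} consistent $T_0$-type over \emph{every} small parameter set is faithfully represented by some $SOP_2$-consistent branch, yet constrained enough that $T_*$ does not accidentally force spurious consistencies or incompatibilities that neither $T_0$ nor $T_1$ supplies. This is exactly where $SOP_2$ is used decisively: its tree-indexed incompatibility graph is universal enough to encode the possible consistency patterns of arbitrary finite fragments of first-order types, and this ``pattern transfer'' is the technical heart of the argument in \cite{MiSh:1051}.
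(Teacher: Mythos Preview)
This statement is a \emph{cited Fact}: the paper does not prove it here but refers to \cite{MiSh:1051} Theorem~7.13, noting only that the proof there ``uses some ideas from the proof that $SOP_2$ is $\tlf$-maximum, from \cite{MiSh:998}.'' So there is no in-paper proof to compare against; the question is whether your sketch is a faithful or viable outline of the cited argument.

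Your sketch has the right flavor but a genuine gap at step~(iii). The decoder $G(c,\bar s)$ is meant to take a realization $c$ of the $\vp$-branch-type together with ``$\bar s$'' and output a realization of $q$; but $q$ is a type over a set of size $<\lambda$, so the chain $\langle s_\alpha \rangle$ exhausting it has length $<\lambda$, not $\omega$, and certainly is not a finite tuple. A first-order term $G$ can only see $c$ and finitely many parameters, so as written it cannot recover the whole type. You acknowledge that making the coding uniform is ``the central obstacle,'' but the obstacle you name (step~(ii), faithful tree-coding of consistency patterns) is not actually where the argument breaks; it is the decoding step that needs a new idea.

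The actual route in \cite{MiSh:1051}, following \cite{MiSh:998}, is not a direct type-by-type coding with a decoder term. Instead one works in the framework of \emph{cofinality spectrum problems}: $T_*$ is chosen so that $\lambda$-saturation of the $SOP_2$-reduct forces every relevant internal tree to have \emph{treetops} (equivalently, certain pseudofinite linear orders have no small cuts). One then shows, uniformly in $T_*$, that treetops in these internal trees suffice to realize all $T_0$-types over small sets. The point is that the treetop $c$ itself, being an element of the model above the whole branch, replaces your infinite $\bar s$; the internal tree structure lets you read off from $c$ alone which finite fragment sits at each level, and Skolem functions in $T_*$ then produce the realization level-by-level in a way that is first-order uniform. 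Your step~(iii) is gesturing at this, but without the treetop/internal-tree machinery the passage from ``$c$ realizes the branch'' to ``some definable function of $c$ realizes $q$'' does not go through.
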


It would be nice to remove $GCH$ from \ref{844-fact} and therefore from the characterization of the maximal class, see \S \ref{s:op} Problem \ref{p:nsop2} below, but this seems to require new ideas: 
currently, GCH contributes to simplifying the structure of trees and thus to extracting a suitable 
amalgamation condition from the $NSOP_2$ hypothesis. 

The remainder of what (we believe) is known on $\tlf^*$ comes from quoting the known results on Keisler's order, in each case 
invoking \ref{c15} or \ref{c16} as appropriate, and 
noting that the known ZFC dividing lines in Keisler's order (including the infinitely many classes of \cite{MiSh:1050}) all hold for arbitrarily 
large $\lambda$.

\section{$\GEM$ models and indiscernible sequences} \label{s:gem}
\setcounter{theoremcounter}{0}

In this section we review the basic setup of generalized Ehrenfeucht-Mostowski models and establish that the classes of 
index models we'll use later in the paper have the desired properties. 
We roughly follow \cite{Sh:E59} \S 1,  but it seemed best 
to make the paper self contained. We prefer to call the models generalized EM models rather than just EM models 
to stress that we use different index models. 

Note to the reader: we've begun the paper with this introductory material to frame the approach, however, 
the reader interested only in the coding arguments of \S\ref{s:trg} and \S\ref{s:tfeq} may skip ahead.

\begin{defn} \label{d:generates}
We say that $\ma = \{ \bar{a}_t : t \in I \} \subseteq {^{\omega >}N}$ \emph{generates} the model $N$ if every element of $N$ is in the definable closure of $\{ \bar{a}_{t,\ell} : t\in I, \ell < \lgn(\bar{a}_t) \}$.  
\end{defn}

In the next definition, ``quantifier-free types'' means ``...of finite tuples.''

\begin{defn} 
A \emph{template} $\Phi$ is a map whose domain is the set of quantifier-free types of 
one structure and whose range is contained in the set of quantifier-free types of a possibly different structure.
\end{defn}

\begin{expl} \label{ex1}
The classical example of a template comes when $(I, <)$ is an infinite linear order, $N$ is a model, and 
$\langle \bar{a}_t : t \in I \rangle$ is an indiscernible sequence in $N$: the map $\Phi$ taking 
$\tpqf( \langle t_0, \dots, t_{k-1} \rangle, \emptyset, I )$ to $\tpqf ( \bar{a}_{t_0}~^\smallfrown \cdots ~^\smallfrown \bar{a}_{t_{k-1}}, 
\emptyset, N)$ is a template.  
\end{expl}

A related but much richer source of examples arise as follows. 

\begin{ntn} \label{bar-t} \emph{ }
When $\bar{t} = \langle t_{i_0}, \dots, t_{i_{k-1}} \rangle \in {^{k} I}$, write $\bar{a}_{\bar{t}}$
for ${\bar{a}_{t_{i_0}}} ~^\smallfrown \cdots ^\smallfrown \bar{a}_{t_{i_{k-1}}}$. 
\end{ntn}

\begin{defn}[$\GEM$ models and proper templates, \cite{Sh:E59} Definition 1.8] \label{d:t-e59} \emph{ }
We say 
$N = \GEM(I, \Phi) = \GEM(I, \Phi, \ma)$ is a generalized Ehrenfeucht-Mostowski model with skeleton ${\ma}$ when
for some vocabulary $\tau = \tau_\Phi$ we have: 
\begin{enumerate}
\item $I$ is a model, called the index model. 
\item $N$ is a $\tau_\Phi$-structure and ${\ma} = \{ \bar{a}_t : t \in I \}$ generates $N$. 
\item $\langle \bar{a}_t : t \in I \rangle$ is quantifier free indiscernible in $N$. 
\item $\Phi$ is a template, taking $($for each $n<\omega$$)$ the quantifier free type of 
$\bar{t} = \langle t_0, \dots, t_{n-1} \rangle$ in $I$ to the quantifier free type of 
$\bar{a}_{\bar{t}}$ in $N$. $($So $\Phi$ determines $\tau_\Phi$ uniquely, and also 
a theory $T_\Phi$, the maximal $\tau_\Phi$-theory which holds in every such $N$.$)$ 
\end{enumerate}
\end{defn}

The skeleton $\ma$ generating a given $\GEM$ model may not be unique, so we often display it. Note also that templates 
are simply possible instructions, which may not be `coherent' or give rise to a model.  Templates which do have a special name. 

\begin{defn}
The template $\Phi$ is called proper for $I$ if 
there is $M$ such that $M = \GEM(I, \Phi)$. We say $\Phi$ is proper for a class $\mk$ if $\Phi$ is proper for 
all $I \in \mk$. 
\end{defn}

Here are some helpful properties we will assume our templates have. 

\begin{rmk} \label{d:rmk} Let $\Phi$ be a template proper for $\mk$. 
When $T_\Phi$ has Skolem functions, 
\begin{enumerate}
\item $\Phi$ is also \emph{nice} $($by transitivity of $<$$)$, which implies:   
\begin{enumerate}
\item $T_\Phi$ is complete. 
\item for every $I \in \mk$, $\langle \bar{a}_{t} : t \in I \rangle$ is indiscernible, not just quantifier-free 
indiscernible, in $\GEM(I, \Phi)$. 
\item for every $J \subseteq I$ from $\mk$ we have $\GEM(J, \Phi) \preceq \GEM(I, \Phi)$. 
\end{enumerate} 
\item $\GEM(I, \Phi)$ is unique in the sense that it depends, up to isomorphism, 
on $\Phi$ and the isomorphism type of $I$. 
\end{enumerate}
\end{rmk}

To summarize in the usual terminology of EM models: 

\begin{conv} \label{c:nice} 
All templates we consider are assumed to be \emph{very nice}, meaning: they are non-trivial $($i.e. we may add in $\ref{d:t-e59}(1)$ the condition
that $\lgn(\bar{a}_t) \geq 1$ and $\langle \bar{a}_t : t \in I \rangle$ is without repetition$)$, and $T_\Phi$ is well defined and 
has Skolem functions, \emph{thus} $\ref{d:rmk}$ applies $($so $T_\Phi$ is complete, etc., as there$)$. 
\end{conv}

\begin{ntn} \emph{ }
We may write $\GEM_{\tau}(I,\Phi)$ to denote the reduct to $\tau$. 
\end{ntn}

Next we explain our conventions and requirements on the class $\mk$ of index models. 

\begin{conv} \label{conv1} 
In what follows $\mk$ will always denote a nonempty class of infinite models, called \emph{index models}, which are expansions of linear orders, to a vocabulary $\tau = \tau(\mk)$, so $<$ belongs to $\tau$.  In particular $\mk$ need not be elementary.  We will use $I, J$... for elements of $\mk$.  
\end{conv}

\begin{conv} \label{conv2}
As $\mk$ may not be elementary, the phrase ``$J$ in $\mk$ is $\aleph_0$-saturated'' will always abbreviate ``$J$ is $\aleph_0$-homogeneous and 
$\aleph_0$-universal for elements of $\mk$.''  We will always assume our $\mk$s contain $\aleph_0$-saturated elements. 
\end{conv}

In this setup a crucial property of a class $\mk$ is being \emph{Ramsey}.   
To motivate this property, consider again the example \ref{ex1} of an indiscernible sequence $\bar{a} = \langle \bar{a}_t : t \in I \rangle$ in 
a model $N$, and its associated template $\Phi$.  Suppose we were to expand $N$ to $N^+$ in some larger language. 
The sequence $\bar{a}$ might no longer be indiscernible in $N^+$, but we could find in some elementary extension $N_*$ of $N^+$  
an indiscernible sequence $\bar{b} = \langle \bar{b}_t : t \in I \rangle$ such that the template $\Psi$ associated to $\bar{b}$ 
is an extension of $\Phi$ in a natural sense: 
\[ \mbox{ if } \Phi( \tpqf( \bar{t}, \emptyset, I) ) = p 
\mbox{ and } \Psi( \tpqf( \bar{t}, \emptyset, I) ) = q 
\mbox{ then } p = q \rstr \tau(\Phi). \]  
The right analogue for $\GEM$ models is given by the Ramsey property, which both tells us certain templates exist, and ensures that these templates reflect 
the given base structure. This definition is somewhat less general than \cite{Sh:E59} Definition 1.15, but it suffices for our purposes here.

\begin{defn} \label{d:r1}
We say the class $\mk$ is Ramsey, or simplified Ramsey, when: 

\noindent given any 
\begin{enumerate}[a)]
\item $J \in \mk$ which is $\aleph_0$-saturated,  
\item model $M$, and 
\item sequence ${\mb} = \langle \bar{b}_t : t \in J \rangle$ of finite sequences from $M$ with the length of $\bar{b}_t$ 
determined by $\tpqf(t, \emptyset, J)$, 
\end{enumerate}
there exists a template $\Psi$ which is proper for $\mk$  such that: 
\begin{enumerate}[i)]
\item $\tau(M) \subseteq \tau(\Psi)$ 
\item $\Psi$ reflects ${\mb}$ in the following sense: 

\noindent for any $s_0, \dots, s_{n-1}$ from $J$, 
\\ any $\vp = \vp(x_0, \dots, x_{m-1}) \in \ml(\tau(M))$, 
\\ and any $\tau(M)$-terms $\sigma_\ell(\bar{y}_0, \dots, \bar{y}_{n-1})$ for $\ell = 0, \dots, m-1$, 

\begin{quotation}
\vsbr
\noindent \underline{if} 
$ M \models \vp[\sigma_0(\bar{b}_{t_0}, \dots, \bar{b}_{t_{n-1}}), \dots, \sigma_{m-1}(\bar{b}_{t_0}, \dots, \bar{b}_{t_{n-1}})] $
\\ for every $t_0, \dots, t_{n-1}$ realizing $\tpqf(s_0~^\smallfrown \cdots ^\smallfrown s_{n-1}, \emptyset, J)$ in $J$,

\vsbr 
\noindent \underline{then} $\GEM(J, \Psi) \models \vp[\sigma_0(\bar{a}_{s_0}, \dots, \bar{a}_{s_{n-1}}), \dots, \sigma_{m-1}(\bar{a}_{s_0}, \dots, \bar{a}_{s_{n-1}})]$. 
\end{quotation}
\end{enumerate}
\end{defn}

\begin{disc}  
The term ``Ramsey property'' for an index model class is justified by the fact (in our language) that it  
corresponds naturally to the set of finite substructures of elements of the class 
being a Ramsey class in the sense of  Ne\v{s}et\v{r}il-R\"{o}dl \cite{nr} and
of Kechris-Pestov-Todor\v{c}evi\'{c} \cite{kpt}. See Scow \cite{scow2} Theorem 4.31 (in a slightly different language). 
\end{disc}

\noindent 
We will generally use Definition \ref{d:r1} in the form of Corollary \ref{d:ramsey-exp}. 

\begin{conv}
Given a class $\mk$, we may write $\upk$ for the class of templates proper for $\mk$, and write 
$\up$ when $\mk$ is clear from context. 
\end{conv}

\begin{defn} \label{d:order}
Given a class of templates $\Upsilon$, let $\leq_{\Upsilon}$ be the natural partial order 
on $\Upsilon$, that is, $\Phi \leq_{\Upsilon} \Psi$ means that $\tau(\Phi) \subseteq \tau(\Psi)$ and 
$\GEM(I,\Phi) \subseteq \GEM(I, \Psi)$, $\GEM_{\tau(\Phi)}(I,\Phi) \preceq \GEM_{\tau(\Phi)}(I, \Psi)$.  
We may just use $\leq$ when $\Upsilon$ is clear from context. 
\end{defn}

\begin{cor} \label{d:ramsey-exp} If $\mk$ is Ramsey,  whenever we are given:
\begin{enumerate}[a)]
\item $J \in \mk$ is $\aleph_0$-saturated
\item $\Phi$ a template proper for $\mk$
\item $M = \GEM(J, \Phi)$ with skeleton $\ma$ 
\item $N^+$, an elementary extension or expansion of $M$, or both   
\end{enumerate}
then there is a template $\Psi$ proper for $\mk$ with $\tau(\Psi) \supseteq \tau(N^+)$ and $\Psi \geq \Phi$. 
Moreover, $\Psi$ reflects $\ma$ in the sense of $\ref{d:r1}$ i)-ii), with $\ma$, $N^+$ here replacing $\mb$, $M$ there. 
\end{cor}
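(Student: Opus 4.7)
The plan is to apply the Ramsey property, Definition \ref{d:r1}, with $N^+$ in the role of $M$ and with $\mb := \ma = \langle \bar{a}_t : t \in J \rangle$. This is legitimate: $\ma$ is a sequence of finite tuples from $M \subseteq N^+$, the length of $\bar{a}_t$ depends only on $\tpqf(t, \emptyset, J)$ because $\ma$ is the skeleton of a $\GEM$ model with template $\Phi$ proper for $\mk$, and $J$ is $\aleph_0$-saturated by hypothesis. The conclusion delivers a template $\Psi \in \upk$ with $\tau(\Psi) \supseteq \tau(N^+)$ that reflects $\ma$ in the sense of \ref{d:r1}(ii). The inclusion $\tau(\Psi) \supseteq \tau(N^+) \supseteq \tau(M) \supseteq \tau(\Phi)$ and the reflection clause itself immediately supply everything in the conclusion except the comparison $\Phi \leq_\Upsilon \Psi$.

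To verify $\Phi \leq_\Upsilon \Psi$, write the skeleton of $\GEM(J,\Psi)$ as $\mc = \langle \bar{c}_t : t \in J \rangle$, and apply the reflection clause to formulas $\vp \in \ml(\tau(\Phi))$ and $\tau(\Phi)$-terms $\sigma_\ell$, in particular the Skolem terms available by Convention \ref{c:nice}. Since $N^+$ is an elementary extension and/or expansion of $M$ and $\tau(\Phi) \subseteq \tau(M)$, the truth of $\vp[\sigma_0(\bar{a}_{\bar{t}}), \dots, \sigma_{m-1}(\bar{a}_{\bar{t}})]$ is the same in $M$ and in $N^+$. By the quantifier-free indiscernibility of $\ma$ in $M$ (inherited from $\Phi$), any such statement that holds for one realization of a fixed qf-type over $\emptyset$ in $J$ holds for every realization, so the reflection clause yields the analogous statement for $\bar{c}_{\bar{t}}$ in $\GEM(J,\Psi)$. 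Thus the $\tau(\Phi)$-diagrams of $\ma$ in $M$ and of $\mc$ in $\GEM(J,\Psi)$ (computed via all $\tau(\Phi)$-terms, including Skolem ones) coincide.

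Because $\ma$ generates $M = \GEM(J,\Phi)$ and $\mc$ generates $\GEM(J,\Psi)$ as Skolem hulls, the assignment $\bar{a}_t \mapsto \bar{c}_t$ extends uniquely to a $\tau(\Phi)$-elementary embedding of $M$ into $\GEM_{\tau(\Phi)}(J,\Psi)$; identifying along this embedding gives $\GEM(J,\Phi) \subseteq \GEM(J,\Psi)$ and $\GEM_{\tau(\Phi)}(J,\Phi) \preceq \GEM_{\tau(\Phi)}(J,\Psi)$, which is Definition \ref{d:order} restricted to the index model $J$. To propagate this to an arbitrary $I \in \mk$, invoke Convention \ref{conv2}: since $J$ is $\aleph_0$-universal for $\mk$, every qf-type of a finite tuple realized in any $I \in \mk$ is realized in $J$, and both $\Phi$ and $\Psi$ are determined by their action on qf-types of finite tuples, so the agreement we established on $J$-indexed skeletons propagates to $\GEM(I,\Phi) \subseteq \GEM(I,\Psi)$ and $\GEM_{\tau(\Phi)}(I,\Phi) \preceq \GEM_{\tau(\Phi)}(I,\Psi)$ for every $I \in \mk$. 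No step here is deep; the only point to be careful about is that reflection on the single $\aleph_0$-saturated index model $J$ is enough, via universality, to compare the two templates on the whole class $\mk$.
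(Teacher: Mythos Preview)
Your proof is correct and follows the same approach as the paper: apply Definition \ref{d:r1} with $N^+$ in the role of $M$ and $\ma$ in the role of $\mb$. The paper's proof is two sentences and leaves the verification of $\Psi \geq \Phi$ entirely implicit; you have filled this in carefully, which is helpful. The one thing the paper makes explicit that you do not is the preliminary step of expanding $N^+$ to have Skolem functions (if it does not already), so that the resulting $\Psi$ is guaranteed to satisfy Convention \ref{c:nice}; you invoke that convention as if it applies automatically, which is defensible given how it is phrased, but strictly speaking one should check that the Ramsey property can be arranged to output a very nice template, and Skolemizing $N^+$ beforehand is the clean way to do this.
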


\begin{proof} 
If it is not already the case, expand $N^+$ to have Skolem functions.  
Apply \ref{d:r1} with $J, N^+, \ma$ here for $J, M, \mb$ there. 
\end{proof}

The reader will notice that also in the proofs below, before applying \ref{d:ramsey-exp}, we expand the models in question to have 
Skolem functions, and this is to ensure \ref{c:nice} above. 
Note: we are assuming all templates are nice (\ref{c:nice}), so what we have called ``Ramsey'' 
is sometimes called ``nicely Ramsey'' elsewhere, e.g. \cite{Sh:E59} Definition 1.16.

In this language, Ehrenfeucht and Mostowski \cite{EM} proved that: 

\begin{fact}
Let $\mk$ be the class of linear orders. Then $\mk$ is Ramsey. 
\end{fact}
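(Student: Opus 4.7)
The plan is to apply the finite Ramsey theorem iteratively together with compactness to extract from $\mb$ an indiscernible $\omega$-sequence in an elementary extension of $M$, and then read off the template $\Psi$; this is the classical Ehrenfeucht--Mostowski argument packaged to satisfy Definition \ref{d:r1}.

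First I would expand $M$ to a Skolemization $M^+$ so that the eventual $\Psi$ satisfies Convention \ref{c:nice}. Since the length of $\bar{b}_t$ depends only on the (trivial) quantifier-free type of a single point in $J$, it is some fixed $\ell$. Choose any strictly increasing $\omega$-sequence $t_0<t_1<\cdots$ in $J$ and set $\bar{b}^{(n)}:=\bar{b}_{t_n}$. A standard iterated Ramsey-plus-compactness extraction then produces an elementary extension $M^{**}\succeq M^+$ and a sequence $\langle\bar{b}^*_n:n<\omega\rangle$ of $\ell$-tuples from $M^{**}$ which is indiscernible over $\emptyset$ and moreover has the transfer property: for every formula $\psi(\bar{x}_0,\dots,\bar{x}_{k-1})$ in the language of $M^+$, if $M^+\models\psi(\bar{b}^{(n_0)},\dots,\bar{b}^{(n_{k-1})})$ for all $n_0<\cdots<n_{k-1}<\omega$, then $M^{**}\models\psi(\bar{b}^*_0,\dots,\bar{b}^*_{k-1})$.

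Next I would define $\Psi$. A quantifier-free type $p(y_0,\dots,y_{n-1})$ of an $n$-tuple from a linear order is determined by the equality pattern on the $y_i$'s and the induced linear order on their equivalence classes, so it is encoded by an integer $k_p\leq n$ together with an order-preserving surjection $f_p\colon\{0,\dots,n-1\}\to\{0,\dots,k_p-1\}$. Set
\[ \Psi(p)\;:=\;\tpqf\bigl(\bar{b}^*_{f_p(0)}{}^\smallfrown\cdots{}^\smallfrown\bar{b}^*_{f_p(n-1)}\,/\,\emptyset,\;M^{**}\bigr). \]
Properness of $\Psi$ for the class of linear orders follows by compactness: for any linear order $I$, the theory obtained by augmenting $\operatorname{Th}(M^{**})$ with constants $\langle\bar{a}_t:t\in I\rangle$ asked to realize $\Psi$ is finitely satisfiable, since any finite suborder of $I$ embeds into $\{t_n:n<\omega\}$ and is witnessed there by the corresponding $\bar{b}^*$'s (with repetitions dictated by $f_p$). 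The template inherits completeness and Skolem functions from $M^+$, so it is very nice.

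For reflection, given data as in Definition \ref{d:r1}(ii) with $p:=\tpqf(s_0{}^\smallfrown\cdots{}^\smallfrown s_{n-1},\emptyset,J)$, form the $k_p$-variable formula
\[ \psi(\bar{y}_0,\dots,\bar{y}_{k_p-1})\;:=\;\vp\bigl(\sigma_0(\bar{y}_{f_p(0)},\dots,\bar{y}_{f_p(n-1)}),\,\dots,\,\sigma_{m-1}(\bar{y}_{f_p(0)},\dots,\bar{y}_{f_p(n-1)})\bigr). \]
The hypothesis of \ref{d:r1}(ii) unpacks to: $M\models\psi(\bar{b}_{u_0},\dots,\bar{b}_{u_{k_p-1}})$ for every strictly increasing $(u_0,\dots,u_{k_p-1})$ in $J$, in particular along every strictly increasing subtuple of $\{t_n\}$; the transfer property then delivers $M^{**}\models\psi(\bar{b}^*_0,\dots,\bar{b}^*_{k_p-1})$, which by the definition of $\Psi$ is exactly the conclusion $\GEM(J,\Psi)\models\vp[\sigma_0(\bar{a}_{s_0},\dots,\bar{a}_{s_{n-1}}),\,\dots,\,\sigma_{m-1}(\bar{a}_{s_0},\dots,\bar{a}_{s_{n-1}})]$. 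The main obstacle is not conceptual but organizational: one must carefully track the collapse via $f_p$ between quantifier-free types $p$ (which may involve equalities) and strictly increasing tuples of distinct elements, both in defining $\Psi$ and in reducing the reflection hypothesis to a single formula of strictly increasing arity.
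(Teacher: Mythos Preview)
The paper does not give a proof of this fact at all; it simply attributes it to Ehrenfeucht and Mostowski \cite{EM} as the classical result. Your proposal correctly supplies that classical argument --- Skolemize, extract via finite Ramsey plus compactness an indiscernible $\omega$-sequence with the finitary transfer property, and read off the template --- and your careful bookkeeping with $f_p$ to handle quantifier-free types with equalities is exactly what is needed to match Definition~\ref{d:r1}. So your approach is the one the paper is implicitly invoking.
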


It likewise generally requires proof to show that other classes are Ramsey. 
(See also the Appendix and the papers cited there.) 
In this paper we will use the following two classes which were already known to be Ramsey. 

The first is the class of linear orders expanded by $\mu$ unary predicates which partition the domain. Note that 
asking that the predicates partition the domain means this is not an elementary class (recall \ref{conv2}).  

\begin{defn} \label{d8}
Define $\mk_\mu$ to be the class whose elements are  
\[ I = (I ; <, \{ Q_{\alpha} : \alpha < \mu\} ) \]
where $(I, <)$ is a linear order, and the unary predicates $\langle Q^I_\alpha : \alpha < \mu \rangle$ partition $I$. 
\end{defn}

\begin{fact} \label{kormur}
For any cardinal $\mu$, $\mk_\mu$ is Ramsey. 
\end{fact}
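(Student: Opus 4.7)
The plan is to combine the classical finite Ramsey theorem in product form with a standard compactness argument. The crucial locality observation is that in $\tau(\mk_\mu)$, the quantifier-free type $\tpqf(\bar{t}, \emptyset, I)$ of a finite tuple $\bar{t} = \langle t_0, \dots, t_{n-1} \rangle$ in any $I \in \mk_\mu$ is determined by just two pieces of data: its linear order type together with the coloring $c \colon \{0, \dots, n-1\} \to \mu$ recording which $Q_\alpha$ each $t_i$ lies in. In particular, each individual qf-type mentions only finitely many colors, so the possibly large size of $\mu$ causes no fundamental difficulty.

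Given $J$, $M$, $\mb = \langle \bar{b}_t : t \in J \rangle$ as in Definition \ref{d:r1}, first expand $M$ to have Skolem functions. Let $I^* \in \mk_\mu$ be $\aleph_0$-homogeneous and $\aleph_0$-universal for $\mk_\mu$ (guaranteed by \ref{conv2}); note $I^*$ then realizes every qf-type of $\mk_\mu$, since each such qf-type appears in a countable member of $\mk_\mu$. In the language $\tau(M) \cup \tau(\mk_\mu) \cup \{c_t : t \in I^*\}$, let $T^*$ assert: (i) $\mathrm{Th}(M)$; (ii) the atomic diagram identifying $\{c_t : t \in I^*\}$ with $I^*$; (iii) for each qf-type $p$ of $\mk_\mu$ and each $\tau(M)$-formula $\vp$, the indiscernibility axioms $\vp(c_{\bar{s}}) \leftrightarrow \vp(c_{\bar{s}'})$ whenever $\bar{s}, \bar{s}'$ realize $p$ in $I^*$; and (iv) for each $p, \vp$ such that $M \models \vp(\bar{b}_{\bar{t}})$ for \emph{every} $\bar{t}$ in $J$ realizing $p$, the axioms $\vp(c_{\bar{s}})$ for each $\bar{s}$ in $I^*$ realizing $p$. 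A realization of $T^*$, together with Skolem closure, produces a template $\Psi$ with $\tau(\Psi) \supseteq \tau(M)$ that is genuinely a template proper for $I^*$ by (iii) and that reflects $\mb$ by (iv).

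The main work is verifying finite satisfiability of $T^*$. A finite fragment $F \subseteq T^*$ involves only finitely many colors $\alpha_1, \dots, \alpha_r$, formulas $\vp_1, \dots, \vp_k$, qf-types $p_1, \dots, p_m$, and constants $c_{t_1}, \dots, c_{t_N}$. Each $Q_{\alpha_i}^J$ is infinite, because any single-color countable member of $\mk_\mu$ embeds into $J$ by $\aleph_0$-universality. Iterating the classical finite Ramsey theorem in product form over $Q_{\alpha_1}^J, \dots, Q_{\alpha_r}^J$, one obtains finite homogeneous subsequences $H_i \subseteq Q_{\alpha_i}^J$, large enough to realize every qf-type $p_j$ needed by $F$ under some qf-type-preserving matching with $\{t_1, \dots, t_N\}$, on which the truth values of the $\vp_j(\bar{b}_{\bar{t}})$'s depend only on $\tpqf(\bar{t}, \emptyset, J)$. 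Interpreting each $c_{t_\ell}$ by its matched element of $\bigcup_i H_i$ satisfies $F$: axioms (i), (ii) are immediate, (iii) holds by homogeneity of the Ramsey-extracted subsequences, and (iv) holds because if $\vp(\bar{b}_{\bar{t}})$ is true on every $\bar{t}$ of qf-type $p$ in $J$ then in particular on those drawn from $\bigcup_i H_i$. By compactness $T^*$ has a model, yielding $\Psi$ proper for $I^*$; that $\Psi$ is proper for every $I \in \mk_\mu$ follows from the universality of $I^*$ together with a standard compactness-plus-Skolem-closure stretching argument, since $\Psi$ is already defined on every qf-type of $\mk_\mu$. The main obstacle is coordinating the Ramsey-homogeneous subsequences across several color classes simultaneously; this is handled by the product form of Ramsey and is the point at which the $\aleph_0$-saturation of $J$ is used essentially.
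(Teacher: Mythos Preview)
Your overall architecture is correct and standard: set up a theory $T^*$ encoding generalized indiscernibility together with the reflection clauses, then verify finite satisfiability by a finitary Ramsey-type argument inside $J$. The paper itself gives no argument; it simply cites \cite{Sh:E59} Theorem~1.18(5) and, alternatively, the Ne\v{s}et\v{r}il--R\"odl / Abramson--Harrington theorem that for any relational signature $\tau$ the class of all finite ordered $\tau$-structures is a Ramsey class. So you have written strictly more than the paper does.

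That said, the step where you invoke ``the classical finite Ramsey theorem in product form'' is a genuine gap. The product Ramsey theorem colors elements of $\prod_i \binom{A_i}{k_i}$ and returns a monochromatic subgrid; it carries no information about how the sets $A_i$ interleave inside a common linear order. In your situation the truth value of $\vp_j(\bar{b}_{\bar t})$ depends on exactly this interleaving: two tuples drawing the same number of elements from each color class but interleaved differently have different quantifier-free types in $J$, and nothing in product Ramsey forces them to agree. So extracting $H_i \subseteq Q^J_{\alpha_i}$ separately and hoping $\bigcup_i H_i$ is qf-type-homogeneous does not follow; nor is it automatic that $\bigcup_i H_i$ even realizes every qf-type you need, since that too depends on interleaving. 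The theorem that does this job directly is precisely the one the paper cites (Ne\v{s}et\v{r}il--R\"odl / Abramson--Harrington for ordered unary structures). If you want to avoid that black box, there is an elementary reduction to ordinary Ramsey: use the $\aleph_0$-saturation of $J$ to lay down $N$ consecutive ``blocks,'' each containing one element of every relevant color in a fixed internal order; then the qf-type of any tuple whose elements lie in distinct blocks is determined by the set of block indices together with the color pattern, and ordinary Ramsey on block indices (iterated over the finitely many $\vp_j$ and $p_j$) gives the required homogeneity while still realizing every needed qf-type. Your closing sentence shows you located the difficulty correctly; ``product Ramsey'' is just the wrong name for its resolution.
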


\begin{proof} \cite{Sh:E59} Theorem 1.18(5). Alternately, using the language of Ramsey classes, one can
consider $\mkfin$ and cite the 
theorem due to Ne\v{s}et\v{r}il and R\"{o}dl \cite{nr} and independently to 
Abramson and Harrington \cite{ah}, that for every relational signature $\tau$ the class of all $\tau \cup \{ \leq \}$-structures, 
where $\leq$ denotes a linear order, is a Ramsey class. 
A statement and proof of this theorem is in \cite{vt} \S 4.1, Theorem 4.1. 
\end{proof}

\begin{defn} \label{n5} \emph{ }
\begin{enumerate}
\item $\mk^{tr}_\kappa$ is the class of trees with $\kappa$ levels and lexicographic order which 
are normal, meaning 
that a member $\eta$ at a limit level is determined by $\{ \nu : \nu \tlf \eta \}$.  
$($So the tree has the function $\cap (\eta, \nu) = \min \{ \rho : \rho \tlf \nu, \rho \tlf \eta \}$.$)$
\item We call $I \in \mk$ standard when the $i$th level, $P^I_i$, of $I$ consists of sequences of length $i$ and $n \in P_i$, $j<i$, 
$\eta \rstr j \in P_j$ and $\eta \rstr j \tlf_I \eta$, so every $I \in \mk^{tr}$ is isomorphic to a standard one 
$($this is justified by the assumption of normality$)$. 
\end{enumerate}
\end{defn}

\begin{fact}
$\mk^{tr}_\kappa$ is Ramsey. 
\end{fact}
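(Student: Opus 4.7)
The plan is to mimic the structure of the proof of Fact \ref{kormur}, replacing the classical Ramsey theorem for linear orders (or the Nešetřil--Rödl/Abramson--Harrington theorem) with a Ramsey theorem for normal, lexicographically ordered trees equipped with the meet function $\cap$. Concretely, given $J \in \mk^{tr}_\kappa$ which is $\aleph_0$-saturated, a model $M$, and a sequence $\mb = \langle \bar{b}_t : t \in J\rangle$ as in Definition \ref{d:r1}, first expand $M$ to have Skolem functions, and for each quantifier-free type $p = \tpqf(\bar{s},\emptyset,J)$ of a tuple from a standard tree define
\[ \Psi(p) := \{ \vp(\bar{x}) \in \ml(\tau(M)) : M \models \vp[\bar{b}_{\bar t}] \text{ for every } \bar t \text{ with } \tpqf(\bar t,\emptyset,J) = p\}. \]
What has to be checked is (i) \emph{completeness}: for each $p$ and each $\vp(\bar x)$, either $\vp \in \Psi(p)$ or $\neg\vp \in \Psi(p)$, after possibly shrinking $J$; (ii) \emph{realizability}: $\Psi$ is proper for every $I \in \mk^{tr}_\kappa$, so that a $\GEM$-model $\GEM(I,\Psi)$ with skeleton indexed by $I$ exists and the prescribed qf-types are consistently realized. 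Properties (ii)(i)--(ii) from Definition \ref{d:r1} then hold by construction.

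Both (i) and (ii) reduce in the usual way (compactness, plus the observation that every finite substructure of a standard tree embeds into an $\aleph_0$-saturated one) to an appropriate partition theorem for trees, which is the real content. The statement one needs is: for every $n$, every qf-type $p$ in the vocabulary $\{<_{\operatorname{lex}},\cap, \trianglelefteq\}$ of a standard tree with $\kappa$ levels, and every finite coloring $c$ of the set of $n$-tuples of qf-type $p$ in an $\aleph_0$-saturated $I \in \mk^{tr}_\kappa$, there exists $I' \subseteq I$ which is again in $\mk^{tr}_\kappa$ and $\aleph_0$-saturated on which $c$ is constant. Iterating this (over formulas $\vp$, over qf-types $p$, and diagonalizing) lets us shrink $J$ so that $\Psi$ becomes complete as above. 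Once completeness is in place, realizability follows because, for any standard $I \in \mk^{tr}_\kappa$, every finite pattern of qf-types appearing in $I$ already appears in $J$ (by $\aleph_0$-universality), so the corresponding qf-types are simultaneously realizable in $M$, and compactness produces $\GEM(I,\Psi)$. Applied in the form of Corollary \ref{d:ramsey-exp}, this is exactly the Ramsey conclusion.

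The main obstacle is the tree partition theorem itself. Unlike Fact \ref{kormur}, where an additional unary partition can essentially be absorbed into the linear-order Ramsey theorem, here the qf-type of an $n$-tuple in $I \in \mk^{tr}_\kappa$ records the full pattern of lexicographic comparisons \emph{and} the pattern of meets (the ``tree-shape'' given by $\cap$), and monochromaticity must be preserved on each such pattern simultaneously. The required statement is Shelah's partition theorem for trees (cf.\ \cite{Sh:c}, Chapter VII), and it can alternatively be extracted from the Halpern--Läuchli theorem combined with Milliken's theorem on strong subtrees, rephrased in the language of qf-types of lexicographically ordered normal trees (see also \cite{Sh:E59} and the references in the Appendix). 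Given this partition theorem, the rest of the argument is the standard compactness construction of a template, entirely parallel to the linear-order case.
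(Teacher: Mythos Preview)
Your proposal is correct and essentially elaborates the content of the paper's own proof, which is simply a citation to \cite{Sh:c} VII \S 3 (specifically 3.7 and Appendix 2.4 there); you have sketched the standard reduction to the tree partition theorem that is proved in that reference. One minor caveat: your alternative route via Halpern--L\"auchli/Milliken is stated for trees of height $\omega$, so for general $\kappa$ the citation to \cite{Sh:c} VII is the one that actually covers the needed case.
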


\begin{proof}
This is \cite{Sh:c} VII \S 3, see 3.7 p. 424 and see 2.4 of the Appendix there. 
\end{proof}

\begin{disc} \label{disc:ramsey}
This setup suggests that new Ramsey theorems in combinatorics may directly help model-theoretic classification insofar as  
new Ramsey theorems may allow for new comparisons of theories via $\GEM$-models.  
\end{disc}

\vspace{2mm}

\section{$\trg$ is minimal among the unstable theories}
\label{s:trg}
\setcounter{theoremcounter}{0}

\begin{ntn} \emph{ }
When $\vp$ is a formula, $\vp^0$ abbreviates $\neg \vp$ and $\vp^1$ abbreviates $\vp$. 
\end{ntn}

\begin{lemma}[Minimality of the random graph] \label{l:trg}  \emph{ }
$\trg$, the theory of the random graph, is $\tlf^*_1$-minimal among the complete, countable unstable theories.   
\emph{Thus}, by monotonicity, $\trg$ is $\tlf^*_{\aleph_1}$-minimal among such theories. 
\end{lemma}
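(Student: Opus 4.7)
The plan is to use the generalized Ehrenfeucht--Mostowski machinery of Section~\ref{s:gem} to construct, for each complete countable unstable theory $T_1$, a common theory $T_\ast = T_\Phi$ which interprets both $\trg$ and $T_1$, and in which $\lambda$-saturation of the $T_1$-reduct forces $\lambda$-saturation of the $\trg$-reduct. Since $\kappa = 1$, the argument must succeed in \emph{every} model of $T_\ast$, not merely in $\aleph_1$-saturated ones, so the construction has to be correspondingly robust.

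First I would fix a formula $\vp(\bar{x};\bar{y}) \in \tau(T_1)$ witnessing the order property, and choose a Ramsey index-model class $\mk$ combinatorially rich enough to encode arbitrary finite graphs; a natural choice is $\mk_{\aleph_0}$ from Definition~\ref{d8}, Ramsey by Fact~\ref{kormur}, whose countably many partitioning unary predicates give the room to separate independent adjacency requirements. In a Skolemized suitably saturated model of $T_1$, I would arrange an indiscernible sequence $\langle \bar{c}_t : t \in I \rangle$ for an $\aleph_0$-saturated $I \in \mk$ so that $\vp(\bar{c}_s, \bar{c}_t)$ records the linear order on $I$ and so that distinguished Skolem terms on coded tuples from the skeleton yield a definable set $V$ of ``random graph vertices'' and a definable binary relation $R$ on $V$ making $(V, R)$ a countable random graph; the coding would be set up so that whether or not $R$ holds between two vertices is recorded by a $\vp$-pattern on disjoint colored segments of the index belonging to their respective codes. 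Corollary~\ref{d:ramsey-exp} then yields a template $\Phi$ proper for $\mk$ whose GEM models carry this dual structure, and I would set $T_\ast = T_\Phi$; by construction $T_\ast$ interprets $T_1$ via its natural reduct and $\trg$ via $(V, R)$.

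For the transfer of saturation, let $M_\ast \models T_\ast$ with $M_\ast \rstr \tau(T_1)$ being $\lambda$-saturated, and let $p$ be a $\trg$-type over $A \subseteq V^{M_\ast}$ with $|A| < \lambda$. Under the coding each element of $A$ is represented by a small tuple in the $\tau(T_1)$-part of $M_\ast$, and the edge relation $R$ is captured by a $\tau(T_1)$-formula on these codes; hence $p$ pulls back to a $\tau(T_1)$-type $q$ over a set $A' \subseteq M_\ast$ of cardinality at most $|A| + \aleph_0 < \lambda$. Finite consistency of $q$ in $M_\ast$ holds because $T_\ast$ axiomatizes $(V,R)$ to be a random graph: for any finite $\{a_1, \dots, a_n\} \subseteq A$ and any sign pattern $\{\epsilon_i\}$ there is $x \in V^{M_\ast}$ with $R(x, a_i)^{\epsilon_i}$ for all $i$, and the codes of $x$ then realize the corresponding fragment of $q$. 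So $q$ is a consistent $\tau(T_1)$-type of size $< \lambda$ over $M_\ast$, realized by the $\lambda$-saturation assumption, and the realization lies in $V^{M_\ast}$ and realizes $p$.

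The main obstacle is the design of the coding $(V, R)$: it must simultaneously (i) be definable via $\tau(T_1)$-formulas on the codes, so that small $\trg$-types pull back to small $\tau(T_1)$-types, (ii) be axiomatized in $T_\ast$ to yield a random graph, and (iii) be such that every consistent finite random-graph adjacency pattern translates to a consistent finite $\vp$-pattern in $T_1$. This is where the Ramsey-ness of $\mk_{\aleph_0}$ (alternatively, of a tree class such as $\mk^{tr}_\omega$) enters decisively: it lets us decouple the adjacency requirements between distinct pairs of vertices into independent instances of the order property on disjoint colored pieces of the index, so that arbitrary finite graphs can be realized by configurations of skeleton tuples that are consistent in $T_1$ already inside $\GEM(J, \Phi)$ for sufficiently rich $J \in \mk$. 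Once the coding is set up, the remaining work consists of routine indiscernibility and reflection applications of Corollary~\ref{d:ramsey-exp} to promote the local realizability into a theorem of $T_\ast$.
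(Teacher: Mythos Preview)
Your approach is substantially different from the paper's, and has a genuine gap.

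The paper does \emph{not} use the GEM machinery for this lemma at all; that machinery is reserved for the non-maximality and simple/non-simple separation results in later sections. Instead, the paper gives a direct elementary construction. After reducing to the case where $T_1$ has the independence property (since $SOP$ already implies maximality), it proves a preparatory Claim~\ref{c:indep} producing in some countable $M_1 \models T_1$ a sequence $\langle b_n : n < \omega \rangle$ witnessing IP with the additional property that every element of $M_1$ has an eventual $\vp$-truth-value along this sequence. The interpreting theory $T_\ast$ is then the theory of a single concrete countable structure expanding $(\mathbb{N}, <)$, interpreting $\trg$ on a predicate $P$, interpreting (the relevant fragment of) $T_1$ on a predicate $Q$, and containing bijections $G \colon \mathbb{N} \to P$ and $F \colon \mathbb{N} \to \{b_n\} \subseteq Q$. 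The saturation transfer has two steps. First, if the $T_1$-reduct of $N \models T_\ast$ is $\lambda$-saturated then $\mathbb{N}^N$ has cofinality $\geq \lambda$, since otherwise an alternating $\vp$-type along a short cofinal sequence in the range of $F$ would be omitted---this is exactly where the eventual-truth-value property is used. Second, any random graph type over fewer than $\lambda$ parameters is, via $G^{-1}$, bounded below some $b_\ast \in \mathbb{N}^N$; one realizes the corresponding IP-type in the $T_1$-reduct by some $d$, and then $\{b < b_\ast : S(d, F(b))\}$ is a \emph{definable} subset of a bounded interval. Since $T_\ast$ records that every such definable subset corresponds to some vertex in $P$ with the prescribed $R$-edges, the type is realized.

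Your sketch instead tries to make the random-graph edge relation $R$ definable by a $\tau(T_1)$-formula on ``codes'' so that a $\trg$-type $p$ pulls back to a $\tau(T_1)$-type $q$. The gap is your final step: you assert without argument that a realization of $q$ ``lies in $V^{M_\ast}$''. If $V$ were itself $\tau(T_1)$-definable you could add its defining formula to $q$; but you build $V$ from Skolem terms lying in $\tau(T_\ast) \setminus \tau(T_1)$, so nothing forces a $\tau(T_1)$-realization of $q$ to land in $V$. This is precisely the difficulty the paper's $\mathbb{N}$-coding sidesteps: it never tries to make $V$ or the coding map $\tau(T_1)$-definable, but instead exploits that bounded definable subsets of $\mathbb{N}^N$ are internally enumerable (a fact recorded in $T_\ast$) to translate a realization on the $T_1$ side back into a random-graph vertex. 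Your proposal has no analogue of this mechanism, and the Ramsey property of the index class does not supply one. A secondary point: you fix $\vp$ witnessing the order property, but the argument really needs the independence property (or the reduction via $SOP$-maximality), which you should make explicit.
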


The proof of Lemma \ref{l:trg} will show more, namely: 

\begin{cor}
For $T_0 = \trg$ and any 
unstable $T_1$, $T_0 \tlf^*_1 T_1$ is witnessed by a theory 
$T_*$ expanding $Th(\mathbb{N}, <)$ with the property that if $N \models T_*$ and $N \rstr \tau(T_1)$ is $\lambda$-saturated, then 
$\mathbb{N}^N$ has cofinality at least $\lambda$.  
\end{cor}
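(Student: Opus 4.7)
My plan is to construct the witness theory $T_*$ as the complete theory $T_\Phi$ of a $\GEM$-template $\Phi$ over the Ramsey class of linear orders (the $\mu=1$ case of Fact~\ref{kormur}), engineered so that $T_*$ expands $Th(\mathbb{N}, <)$, interprets $T_1$ by inclusion $\bar{\varphi}_1 = \operatorname{id}_{\tau(T_1)}$, and interprets $\trg$ by $\bar{\varphi}_0$ encoding $(N, E)$ with $E$ the standard bit-coding random graph on a distinguished sort $N$ carrying $(\mathbb{N}, <, +, \cdot)$. The essential alignment is that the order-property formula $\varphi(x, y) \in \tau(T_1)$ (chosen so as to define a linear order) controls $<$ on $N$ via a definable injection $f : N \to M$, $f(n) = a_n$, onto a $\varphi$-indiscernible skeleton $\langle a_n : n \in N\rangle$. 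The sharpening of the EM setup that drives the argument is a definable rank function $\operatorname{rk} : M \to N$ satisfying the axiom
\[
\operatorname{rk}(x) \leq n \implies \varphi(x, f(n{+}1)),
\]
which embodies the ``relative measurement of types'' emphasized in the introduction.

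First I would build the base model: in a sufficiently saturated $M_1 \models T_1$, iteratively construct $\langle a_n\rangle$ by choosing $a_{n+1}$ $\varphi$-above the $\tau(T_1)$-Skolem closure $X_n$ of $\{a_0, \ldots, a_n\}$ (possible by the order property plus saturation, then extracting indiscernibility by Ramsey). Set $M^\sharp \rstr \tau(T_1) = \bigcup_n X_n$; by construction $X_n <_\varphi a_{n+1}$, so the skeleton is $\varphi$-cofinal in $M^\sharp \rstr \tau(T_1)$. Expand $M^\sharp$ to a countable Skolemized vocabulary $\tau_\sharp \supseteq \tau(T_1) \cup \{N, <, +, \cdot, 0, 1, E, f, \operatorname{rk}\}$ with standard arithmetic on $N$, bit-coded $E$, $f(n) = a_n$, and the rank function $\operatorname{rk}(x) = \min\{n : x \in X_n\}$ made first-order via Skolem witnesses. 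Apply Corollary~\ref{d:ramsey-exp} to $\ma = \langle a_n\rangle$ to obtain a very nice template $\Phi$ proper for the class of linear orders with $\tau(\Phi) \supseteq \tau_\sharp$ and reflecting $M^\sharp$; set $T_* = T_\Phi$, which is a countable complete theory extending $T_1$, $Th(\mathbb{N}, <)$, the rank axiom, and the bit-coded $\trg$-axioms on $(N, E)$, and interprets both $T_1$ and $\trg$ as required.

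For the saturation transfer, let $M_* \models T_*$ with $M_* \rstr \tau(T_1)$ being $\lambda$-saturated. For the cofinality claim in the corollary: if $\langle n_\alpha : \alpha < \mu\rangle$ were cofinal in $N^{M_*}$ with $\mu < \lambda$, the $\tau(T_1)$-type $p(x) = \{\varphi(f(n_\alpha), x) : \alpha < \mu\}$ would be finitely satisfiable (by $f(\max_i n_{\alpha_i} + 1)$) and hence realized by some $b^* \in M_* \rstr \tau(T_1)$. Setting $m^* := \operatorname{rk}(b^*) \in N^{M_*}$, the rank axiom yields $b^* <_\varphi f(m^*{+}1)$; combined with $f(n_\alpha) <_\varphi b^*$ and linearity of $\varphi$ we get $f(n_\alpha) <_\varphi f(m^*{+}1)$, i.e., $n_\alpha \leq m^*$ for all $\alpha$, so $m^*$ is an upper bound of the $n_\alpha$'s, contradicting cofinality. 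For graph saturation of $(N^{M_*}, E^{M_*})$: given disjoint $A, B \subseteq N^{M_*}$ with $|A|{+}|B|<\lambda$, cofinality yields $n^* > A \cup B$ in $N^{M_*}$; the bit-coding axioms of $T_*$ together with $\lambda$-saturation of $M_* \rstr \tau(T_1)$, applied to the rank-bounded slice of $M_*$ corresponding to the interval $[0, 2^{n^*+1})$, deliver the desired realizer $c \in N^{M_*}$ (the pulled-back type of a graph realizer over $\{f(p) : p \in A \cup B\}$ has rank $\leq n^*{+}1$ and hence lands in the skeleton).

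The main obstacle is arranging that $\operatorname{rk}$ is genuinely first-order definable in $\tau_\sharp$ and that its axiom propagates faithfully through the Ramsey-extracted template: the naive definition $\operatorname{rk}(x) = \min\{n : x \in X_n\}$ must be replaced by a Skolem-term presentation that $\Phi$ can reflect. The graph-saturation step compounds this: one must check that the translation of graph types over $N^{M_*}$ via $\bar{\varphi}_0$ corresponds to $\tau(T_1)$-types whose realizers are rank-controlled, so that $\lambda$-saturation on the $\tau(T_1)$ side delivers realizers within the skeleton (hence in $f(N^{M_*})$). This whole package is the novel sharpening of EM models for relative type measurement that the paper introduces; the cofinality portion of the argument is the clean outcome that the corollary records.
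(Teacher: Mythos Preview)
Your approach has a genuine gap. You assume that in any unstable $T_1$ one can choose a formula $\varphi(x,y) \in \tau(T_1)$ that ``defines a linear order,'' and your cofinality argument explicitly invokes transitivity (``linearity of $\varphi$'') to pass from $\varphi(f(n_\alpha), b^*)$ and $\varphi(b^*, f(m^*{+}1))$ to $n_\alpha \leq m^*$. But unstability only gives the order property: a formula $\varphi$ and sequences with $\varphi(a_i, b_j) \iff i < j$. It does \emph{not} make $\varphi$ transitive, and in general no formula of $T_1$ defines a linear (or even partial) order with infinite chains. Take $T_1$ to be the random graph itself: it is unstable (it has IP), yet being simple it has no such formula. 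For such $T_1$ your step fails: the realizer $b^*$ is an arbitrary element of $M_*$, not on the skeleton, and nothing forces $\varphi(f(n_\alpha), b^*) \land \varphi(b^*, f(m^*{+}1)) \implies \varphi(f(n_\alpha), f(m^*{+}1))$. Your rank axiom only gives the second conjunct; it cannot supply the missing transitivity.

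The paper does \emph{not} use the $\GEM$ apparatus for this corollary; that machinery enters only in later sections. Instead, the proof of Lemma~\ref{l:trg} first reduces to the case where $T_1$ has the independence property (the SOP case being trivial by maximality), then invokes Claim~\ref{c:indep} to obtain, inside a countable model of $T_1$, a sequence $\langle b_n : n < \omega\rangle$ over which $\varphi$ has IP and such that \emph{every} element of the model has an eventual $\varphi$-truth value against this sequence. The witness $T_*$ is simply the complete theory of a single countable structure expanding $(\mathbb{N}, <)$, this model of $T_1$, a copy of the random graph, and bijections $F:\mathbb{N} \to Q$ with $F(n) = b_n$ and $G:\mathbb{N}\to P$. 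The cofinality argument is then: a short cofinal sequence $\langle a_\alpha : \alpha < \kappa\rangle$ in $\mathbb{N}^N$ lets one write an alternating type $\{S(x,F(a_{2\alpha})) \land \neg S(x, F(a_{2\alpha+1})) : \alpha < \kappa\}$ which is finitely satisfiable by IP but omitted by the eventual-truth-value axiom, contradicting $\lambda$-saturation of $N \rstr \tau(T_1)$. No transitivity, no rank function, and no template are needed.
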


For the proof of Lemma \ref{l:trg}, we'll need a claim.  

\begin{claim} \label{c:indep}
Let $T$ be a complete countable theory and suppose $\vp(\bar{x};\bar{y})$ has the independence property for $T$. 
Then there is a countable model $M$ of $T$ and a sequence $\langle \bar{b}_n : n < \omega \rangle$ with $\ell(\bar{b}_n) = \ell(\bar{y})$ 
contained in $M$, 
over which $\vp$ has the independence property, and  
such that for any $\bar{a} \in {^{\ell(\bar{x})}M}$, there is a truth value $\trv \in \{ 0, 1 \}$ such that for all but finitely many $n$, 
$M \models \vp[\bar{a}, \bar{b}_n]^\trv$.
\end{claim}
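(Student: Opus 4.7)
The plan is to realize the sequence as the skeleton of an EM-type-style construction: take an indiscernible sequence witnessing IP in a Skolemized version of $T$, and let $M$ be its Skolem hull. The ``eventually constant'' property will then be an immediate consequence of indiscernibility, since every element of the hull depends on only finitely many $\bar b_n$'s and so all later indices are interchangeable from its point of view.

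In detail, first I would fix a Skolemization $T^{\mathrm{sk}} \supseteq T$ (same cardinality). Because $\vp(\bar x;\bar y)$ has the independence property for $T$, hence for $T^{\mathrm{sk}}$, a standard Ramsey--compactness extraction (Erd\H{o}s--Rado applied inside a large model of $T^{\mathrm{sk}}$) yields a $\tau(T^{\mathrm{sk}})$-indiscernible sequence $\langle \bar b_n : n<\omega\rangle$ in some $N \models T^{\mathrm{sk}}$ such that $\vp(\bar x,\bar b_n)$ alternates in truth value along the sequence for some realized parameter; by indiscernibility plus compactness this is equivalent to: for every $w \subseteq \omega$, the partial type $\{\vp(\bar x,\bar b_n)^{n\in w}\}$ is consistent with the elementary diagram, i.e.\ $\vp$ has IP over $\langle \bar b_n\rangle$ in the usual sense.

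Now let $M$ be the Skolem hull in $N$ of $\bigcup_n \bar b_n$; then $M \preceq N$, $M$ is countable, and $M$ restricted to $\tau(T)$ is a countable model of $T$ containing the $\bar b_n$. For any $\bar a \in M$, write $\bar a = \sigma(\bar b_{n_1},\ldots,\bar b_{n_k})$ for some tuple of Skolem terms $\sigma$ and indices $n_1 < \cdots < n_k$. By $\tau(T^{\mathrm{sk}})$-indiscernibility, for any $n,n' > n_k$ the tuples $\sigma(\bar b_{n_1},\ldots,\bar b_{n_k})\,{}^\smallfrown \bar b_n$ and $\sigma(\bar b_{n_1},\ldots,\bar b_{n_k})\,{}^\smallfrown \bar b_{n'}$ have the same type, and in particular $M\models \vp[\bar a,\bar b_n] \leftrightarrow \vp[\bar a,\bar b_{n'}]$. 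So there is a truth value $\trv \in \{0,1\}$, depending only on $\sigma$ and the order type, with $M \models \vp[\bar a,\bar b_n]^\trv$ for all $n > n_k$, which gives the ``for all but finitely many $n$'' conclusion. IP over $\langle \bar b_n\rangle$ in $M$ is inherited from the construction: the consistency of each finite pattern is a property of the elementary diagram and so is preserved in any elementary extension of $M$.

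The only point requiring a small amount of care, and arguably the ``main obstacle,'' is the order of operations: if one first extracted a $\tau(T)$-indiscernible sequence and only afterwards expanded to Skolem functions, the sequence would typically lose indiscernibility in the expanded language, and the clean argument in the previous paragraph (that a Skolem term in finitely many $\bar b_{n_i}$ sees all later $\bar b_n$'s identically) would fail. Extracting indiscernibles inside $T^{\mathrm{sk}}$ avoids this issue entirely and makes the convergence property automatic.
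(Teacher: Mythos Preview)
Your proof is correct and takes a genuinely different route from the paper's. The paper does not Skolemize or extract indiscernibles at all: it fixes any countable $M \preceq N_1$ containing an IP witness $\langle c_\alpha : \alpha < \omega\rangle$, enumerates $M$ as $\langle d_i : i < \omega\rangle$, chooses a uniform ultrafilter $\de$ on $\omega$, lets $\trv(d_i)$ be the truth value for which $C_i = \{c_\alpha : M \models \vp[d_i,c_\alpha]^{\trv(d_i)}\} \in \de$, and then inductively picks distinct $b_n \in \bigcap_{i<n} C_i$. The $b_n$'s are a subsequence of the $c_\alpha$'s, so IP persists, and any $a = d_i$ has the right truth value for all $n > i$.

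Your EM-model approach buys a cleaner conceptual picture and is thematically apt for this paper: once the sequence is $\tau(T^{\mathrm{sk}})$-indiscernible, eventual constancy for every element of the hull is automatic, with no diagonalization needed. The paper's approach buys freedom from any Skolemization or extraction step --- it works inside an \emph{arbitrary} countable model containing an IP sequence, and the ultrafilter does all the work. One minor stylistic point: your phrase ``alternates in truth value for some realized parameter'' is really a witness to instability rather than to IP directly; it would be cleaner to say that you extract the indiscernibles based on an IP-witnessing sequence, so that each finite independence pattern (an existential $\tau(T)$-statement about the tuple $\bar b_0,\ldots,\bar b_{k-1}$) is inherited by the extracted sequence. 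The equivalence you assert is correct for indiscernible sequences, but stating it this way avoids a detour.
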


\begin{proof}
For simplicity, we will write this proof as if $\bar{x}$, $\bar{y}$ were singletons, but the proof is identical for finite tuples.
Let $N_1$ be an $\aleph_1$-saturated model of $T$. Choose $\langle c_\alpha : \alpha < \omega \rangle$ 
such that in $N_1$ the set of formulas $\{ \vp(x,c_\alpha) : \alpha < \omega \}$ is independent. Let $M \preceq N_1$ be countable 
and contain $\{ c_\alpha : \alpha < \omega \}$. Let $\langle d_i : i < \omega \rangle$ list the elements of $M$. 
Let $\de$ be a uniform ultrafilter on $\omega$. For each $i$ let $\trv(d_i)$ be such that 
$C_i := \{ c_\alpha : M \models \vp[d_i, c_\alpha]^{\trv(d_i)} \} \in \de$. 

By induction on $n<\omega$ choose distinct elements $b_n$
such that $b_n \in \bigcap_{i<n} C_{i}$. 
The sequence of $b_n$'s is chosen as a subsequence of the $c_\alpha$'s, so $\vp$ will 
a fortiori have the independence property on this sequence. 
Moreover, if $a \in M$ then for some $i$, $a = d_i$. So 
once $n \in (i, \omega)$, $b_n \in \bigcap_{i < n} C_i$, thus  $M \models \vp[a, b_n]^{\trv(a)}$ as desired. 
\end{proof}

\begin{proof}[Proof of Lemma \ref{l:trg}]
We will prove that $\trg \tlf^*_1 T_1$ for $T_1$ any countable, unstable, complete first order theory. If $T_1$ has $SOP$, then it is already maximal, 
so it will suffice to prove this in the case that $T_1$ has the independence property. 

Let $\vp(\bar{x}, \bar{y})$ be a formula which has the independence property for $T_1$. (In what follows, we'll write as if $\ell(x) = \ell(y) =1$, 
but this is only for simplicity of notation.)   
We'll also assume the three theories $T_1$, $\trg$, and $Th(\mathbb{N}, <)$ have disjoint signatures.

Let $M$ be a countable model whose theory $T$ satisfies:
\begin{enumerate}[(a)]
\item $M$ expands $(\mathbb{N}, <)$.
\item $M \rstr \tau(\trg)$ is a countable random graph.  
\\ That is, for some unary predicate $P$ such that $P^{M}$ is countably infinite, and some binary relation $R$, 
$T \vdash$ ``$(P, R)$ is $n$-random for every $n$,'' i.e. for every finite $n$, for any two disjoint subsets $U, V \subseteq  P^{M}$
of size $n$, there exists $a \in P^{M}$ such that $R^{M}(a,u)$ for all $u \in U$ and $\neg R^{M}(a,v)$ for all $v \in V$.
\item $M \rstr \tau(T_1)$ is a model of $T_1$ satisfying the conclusion of Claim \ref{c:indep}.  The domain of this model is named by 
$Q^M$, and $S^M$ is a binary relation with $T \vdash S \subset Q \times Q$ and $T \vdash$ ``$S$ has the $n$-independence property 
for each $n$,'' that is, for each $n$, there exist $a_1, \dots, a_n$ in $Q^M$ such that the formulas ${\{ S(x,a_i) : 1 \leq i \leq n \}}$ 
are independent. (\emph{We just let $S$ name $\vp$; in other words, for simplicity, we forget everything about the model of $T_1$ except for its domain and the 
formula with the independence property.})
\item $F^{M}$ is an injective function from $\mathbb{N}$ into $Q^{M}$ such that for every $a \in Q^M$, 
 for some truth value $\trv$, for every $n$ large enough, $M \models \vp[a, F^{M}(n)]^{\trv}$. 
\\ (\emph{It suffices to let $F^M(n) = b_n$ where $\langle b_n : n < \omega \rangle$ is from Claim \ref{c:indep}.})
\item $G^{M}$ is a one to one and onto function from $\mathbb{N}$ onto $P^{M}$ (recall $P^M$ was defined in part (b)). 
\end{enumerate}

Note that by our construction, $T = Th(M)$ records: 
\begin{itemize}
\item[(i)] the range of $F$ is a set over which the formula $S$ has the independence property;

\item[(ii)] for every $u \subseteq \{ n : n < n_* \}$ which is definable\footnote{Here definable means in the language of $T$, possibly 
with parameters, i.e. not just in $\{ < \}$.}, for every finite $n_* \in \mathbb{N}$, 
[replacing $u$ by its definition] the set of formulas $\{ R(x,G(y)) : y \in u \} \cup \{ \neg R(x,G(y)) : y < n_*, y \notin u \}$ 
is a partial type and is realized in $P^M$.  In particular, replacing $u$ by its definition, in any model $N$ of $T$, this remains true for any 
$n_* \in \mathbb{N}^N$, possibly nonstandard. 
\end{itemize}
Now let $N$ be any model of $T$. It will suffice to prove that if $N \rstr \tau(T_1)$, or really just 
$N \rstr \{ Q, S \}$, is $\lambda$-saturated, then 
the following three facts hold. 

\br

\noindent (1) \emph{$\mathbb{N}^{N}$ has cofinality $\geq \lambda$.}

If not let $\langle a_\alpha : \alpha < \kappa \rangle$ be $<^N$-increasing and cofinal in $\mathbb{N}^N$, with $\kappa < \lambda$. 
Let $p(x)= \{ Q(x) \} \cup \{ S(x,F^{N}(a_{2\alpha})) \land \neg S(x,F^N(a_{2_\alpha+1}) : \alpha < \kappa \}$. 
Then $p$ is a finitely satisfiable type of cardinality $\kappa < \lambda$. But it can't be realized because, by item (d), 
every element of $Q$ has an eventual $S$-truth value with respect to the image of $F$.  (This was expressible in $T$, 
so remains true in $N$.) So 
$N \rstr \tau(T_1)$ is not saturated, contradiction. 

\br

\noindent (2) \emph{$(P^N, R^N)$ is $\lambda$-saturated.}

If $(P^N, R^N)$ is not $\lambda$-saturated, let $p(x)$ be a 1-type of cardinality $<\lambda$ there which is omitted. Without loss of generality 
$p(x) = \{ (R(x,a_\alpha)^{\mbox{if $\eta(\alpha)$}} : \alpha < \alpha_* \}$ for some $\eta \in {^{\alpha_*}2}$ and 
$\langle a_\alpha : \alpha < \alpha_* \rangle$ with no repetition. 
Let $b_\alpha = (G^{-1})^N(a_\alpha)$ for $\alpha < \alpha_*$. So $\{ b_\alpha : \alpha < \alpha_* \}$ is a subset of 
$\mathbb{N}^N$ of cardinality $<\lambda$ with no repetition, so it has an upper bound $b_*$ by (1). 
Let $c_\alpha = F^N(b_\alpha)$ for $\alpha < \alpha_*$, so $\langle c_\alpha : \alpha < \alpha_* \rangle$ is a sequence of 
$<\lambda$ elements with no repetition, so recalling our observation (i), $\{ S(x,c_\alpha)^{\mbox{if $\eta(\alpha)$}} : \alpha < \alpha_* \}$ is a type 
over $Q^N$. By assumption that $N \rstr \tau(T_1)$ is saturated, this type is realized, say by $d$. 
So the set $U = \{ b < b_{*} : S(d, F^N(b)) \}$ is a first-order definable subset of $N$ (with parameters) 
hence recalling our observation (ii), 
there is 
$a_{\alpha_*}$ in range $G$ such that $(\forall y < b_{*}) (R(a_{\alpha_*}, G(y)) \iff y \in u )$
i.e. $(\forall y < b_{*}) (~R(a_{\alpha_*}, G(y)) \iff S(d,G(y))~ )$
so $a_{\alpha_*}$ realizes $p$. 

\br

For item (3), recall from (1) that since $M$ expands $(\mathbb{N}, <)$, the model $N$ can also be thought of as a set of (possibly 
nonstandard) integers, call it $\mathbb{N}^N$. 
 In $M$, 
every subset of $\mathbb{N}$ which is bounded in the sense of the ordering is finite, so let us call any bounded subset of $\mathbb{N}^N$ 
\emph{pseudofinite}. This name is justified by the properties of such sets in $T$: every pseudofinite set, and a fortiori any nonempty 
definable subset of any pseudofinite set, is a discrete linear order with a first 
and last element.\footnote{This concept also played a key role in cofinality spectrum problems \cite{MiSh:998}.}  With these definitions, 
the following immediate property of $N$ is worth recording: 

\br

\noindent (3) \emph{$N$ satisfies $<\lambda$-regularity, meaning that every set of $<\lambda$ elements is contained in some pseudofinite set.}

This simply restates the fact that the model expands $\mathbb{N}$, and by (1) $\mathbb{N}^N$ has cofinality $\geq \lambda$, so 
every small subset of the model is contained in some bounded set of (nonstandard) integers. 
\end{proof}

\vspace{5mm}

\section{$\trg$ is not maximal} \label{s:trg-max}

Next we turn to a direct proof of the following theorem.

\begin{theorem} \label{t:trg-s}
$\trg$, the theory of the random graph, is not $\tlf^*_1$-maximal. 
\end{theorem}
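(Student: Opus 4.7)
The plan is to exhibit a theory $T_1$ with $\neg(T_1 \tlf^*_1 \trg)$; a natural choice is $T_1 = \tfeq$, which is non-simple while $\trg$ is simple. By Summary \ref{s:sum}(2), we must show that for arbitrarily large regular $\lambda$ and every countable $T_*$ interpreting both $\tfeq$ and $\trg$ (WLOG Skolemized), there is some $M_* \models T_*$ whose $\tau(\trg)$-reduct is $\lambda$-saturated while its $\tau(\tfeq)$-reduct is not. The approach is to build $M_*$ as a generalized EM model $\GEM(I, \Phi)$ over a tree index $I \in \mk^{tr}_\lambda$, exploiting the Ramsey property of this index class.

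First, starting from a countable $N \models T_*$ chosen so that $N \rstr \tau(\trg)$ is the countable random graph and $N \rstr \tau(\tfeq)$ contains a tree-indexed skeleton $\langle \bar{b}_\eta : \eta \in J \rangle$ witnessing the non-simplicity of $\tfeq$ (so along each branch of $J$ the relevant $\tfeq$-type is consistent), apply Corollary \ref{d:ramsey-exp} with $J \in \mk^{tr}_\omega$ an $\aleph_0$-saturated tree to extract a template $\Phi$ proper for $\mk^{tr}_\lambda$ with $\tau(\Phi) \supseteq \tau(T_*)$ reflecting this skeleton. Then pick $I \in \mk^{tr}_\lambda$ of cardinality $\lambda$ which is combinatorially rich (many incomparable nodes at each level) but whose levels are all strictly below $\lambda$, so any would-be cofinal $\lambda$-branch has no element of $I$ lying above it. Set $M_* = \GEM(I, \Phi)$.

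The verification splits into two parts. For $\lambda$-saturation of the $\tau(\trg)$-reduct: any $1$-type over $A \subseteq M_*$ with $|A| < \lambda$ is coded by a finite edge-pattern (by quantifier elimination in $\trg$), and $A$ lies in the Skolem hull of $\{\bar{a}_t : t \in I_0\}$ for some $I_0 \subseteq I$ of size $<\lambda$. By the richness of $I$ and the $\trg$-genericity baked into $\Phi$ via the Ramsey construction, we can pick a fresh node $t \in I \setminus I_0$ whose quantifier-free type over $I_0$ is translated by $\Phi$ into the prescribed random-graph type on $\bar{a}_t$ over $A$. For non-$\lambda$-saturation of the $\tau(\tfeq)$-reduct: along any cofinal branch $b$ through $I$, the tree-indiscernibility of the skeleton inside $\tfeq$ provides a consistent $\tfeq$-type of size $\lambda$ over $\{\bar{a}_{b \rstr \alpha} : \alpha < \lambda\}$ which is omitted in $M_*$, precisely because no element of the skeleton (and hence no definable closure of it) lies above $b$.

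The main obstacle is the first part: ensuring the $\trg$-reduct is genuinely $\lambda$-saturated despite $\Phi$ possibly encoding non-generic edge patterns among skeleton elements. To address this, arrange in the initial construction of $N$ that the tree-indexed $\tfeq$-skeleton is simultaneously $\trg$-generic, meaning the induced random-graph on skeleton elements is as independent as possible; this is consistent because $\trg$ has the independence property and edges among independent elements can be freely prescribed, so no obstruction arises from blending the two reducts. The Ramsey extraction in Corollary \ref{d:ramsey-exp} then transfers this genericity into $\Phi$, so fresh nodes of $I$ outside a given $I_0$ translate under $\Phi$ into elements realizing arbitrary $\trg$-types over the corresponding $A$, completing the argument.
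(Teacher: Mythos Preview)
Your proposal has a genuine gap in the saturation half of the argument. You claim that by making the initial skeleton ``$\trg$-generic'' and applying the Ramsey property once to obtain a single template $\Phi$, the resulting $\GEM(I,\Phi)$ will have $\lambda$-saturated random-graph reduct, with types realized by fresh skeleton elements $\bar{a}_t$. This cannot work. The parameters of a random-graph type over $A \subseteq M_*$ are not skeleton elements but arbitrary terms $\sigma(\bar{a}_{\bar{s}})$, and whether $R(\bar{a}_t, \sigma(\bar{a}_{\bar{s}}))$ holds is already determined by $\Phi$ via $\qftp(t^\smallfrown\bar{s}, \emptyset, I)$. No amount of genericity among skeleton elements controls edges between $\bar{a}_t$ and arbitrary term values, and once $\Phi$ is fixed, a fresh node $t$ has no freedom to realize a prescribed pattern over $A$. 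Concretely, if $\bar{s}_i$ and $\bar{s}_j$ have the same quantifier-free type over $t$ in $I$ and the same term $\sigma$ is used, then $\Phi$ forces $R(\bar{a}_t, \sigma(\bar{a}_{\bar{s}_i})) \leftrightarrow R(\bar{a}_t, \sigma(\bar{a}_{\bar{s}_j}))$, blocking many types.

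The paper's approach is different in exactly this respect: saturation is obtained not from a single $\Phi$ but by an inductive chain of template extensions. Given a type $p$, one passes to an $\aleph_0$-saturated $J \supseteq I$, blows $p$ up to a larger type $q$ by replacing each parameter tuple $\bar{t}_\alpha$ with all tuples of the same quantifier-free type, proves $q$ is consistent (this is the real work, requiring a careful collision-reduction argument exploiting the special structure of the index class), realizes $q$ by a new constant $c$ in an elementary extension, and then re-applies Ramsey to absorb $c$ into a larger template $\Psi \geq \Phi$. Iterating yields saturation. Your sketch skips this entire mechanism. Note also that the paper uses $\tdlo$ rather than $\tfeq$ and the index class $\mk_\lambda$ of colored linear orders rather than trees; your choice of $\tfeq$ and trees is essentially the setup of the later Theorem~\ref{t:simple}, whose saturation argument likewise proceeds by template extension (there using the independence theorem for simple $T_0$), not by fresh skeleton nodes.
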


\noindent Though to our knowledge not previously recorded, 
this follows from known results:

\begin{proof}[Indirect proof of Theorem \ref{t:trg-s}]  
First, we know that $\trg$ is minimum among the unstable theories in Keisler's order \cite{mm4}. 
\cite[Theorem 12.1]{MiSh:999} proves that for arbitrarily large $\lambda$, the random graph is {strictly} $\tlf_\lambda$-smaller 
than any non-low or non-simple theory in Keisler's order.  
By Observation $\ref{c16}$ above, 
we have that $\trg$ is not in the $\tlf^*_1$-maximum class. 
\end{proof}

Theorem \ref{t:trg-s} will also be a special case of Theorem \ref{t:simple} below, however, its proof is 
not a specialization of that proof, and already suggests the idea of \S \ref{s:wd}.

\begin{hyp} \label{h:mu}
For this section, unless otherwise stated, let $\lambda$, $\mu$ be arbitrary but fixed infinite cardinals satisfying 
$\lambda = \lambda^{<\mu}$ and $\lambda \geq 2^\mu$. 
\end{hyp}
  
Since the theory of linear order is $\tlf^*$-maximum, the following suffices: 

\begin{obs} \label{o:sect3}
To show $\trg$, the theory of the random graph, is not $\tlf^*_1$-maximum, it will suffice to show that for $\tdlo = Th(\mathbb{Q}, <)$, 
\[ \neg ( \tdlo \tlf^*_1 \trg ) \]
i.e. for arbitrarily large regular $\lambda$, for 
\emph{every} countable theory $T_*$ interpreting both $\trg$ and $\tdlo$ in the sense of Definition $\ref{d:trs}$, there is some model 
$M \models T_*$ such that $M \rstr \tau(\trg)$ is $\lambda$-saturated but $M \rstr \tau(\tdlo)$ is not $\mu$-saturated.  
Recalling $\ref{s:sum}(2)$, we may assume that $T_*$ has Skolem functions. 
\end{obs}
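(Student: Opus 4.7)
The plan is to reduce the non-maximality of $\trg$ to the specific statement $\neg(\tdlo \tlf^*_1 \trg)$, then unfold definitions. First I would recall that $\tdlo$ has the strict order property, hence $SOP_2$, so by Fact~\ref{maxl-fact} it is $\tlf^*_1$-maximal. If $\trg$ were also $\tlf^*_1$-maximal, then in particular $\tdlo \tlf^*_1 \trg$ would have to hold, since every complete countable theory sits $\tlf^*_1$-below any representative of the maximum class. Contrapositively, establishing $\neg(\tdlo \tlf^*_1 \trg)$ yields the non-maximality of $\trg$, which is the first assertion of the Observation.

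Next I would unfold $\neg(\tdlo \tlf^*_1 \trg)$ directly from Definition~\ref{d:trs} (with $T_0 = \tdlo$, $T_1 = \trg$, and $\kappa = 1$) together with Summary~\ref{s:sum}(2). The positive statement says: for all sufficiently large regular $\lambda$, there is a countable $T_*$ interpreting both theories in disjoint signatures such that whenever $M_* \models T_*$ has $\lambda$-saturated $\tau(\trg)$-reduct, its $\tau(\tdlo)$-reduct is $\lambda$-saturated too. Negating and reversing quantifiers gives exactly: for arbitrarily large regular $\lambda$, for every such countable $T_*$, some $M \models T_*$ has $\lambda$-saturated $\tau(\trg)$-reduct but $\tau(\tdlo)$-reduct which is not $\lambda$-saturated.

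Two minor adjustments then yield the statement as written. On the one hand, the Observation asks for non-$\mu$-saturation rather than non-$\lambda$-saturation of the $\tau(\tdlo)$-reduct; but by Hypothesis~\ref{h:mu} we have $\mu \leq 2^\mu \leq \lambda$, so failing $\mu$-saturation automatically fails $\lambda$-saturation, and producing the stronger defect is therefore sufficient. On the other hand, the concluding sentence ``we may assume $T_*$ has Skolem functions'' is exactly the parenthetical remark in Summary~\ref{s:sum}(2): given any candidate $T_*$, we may pass to a same-cardinality extension $T_{**} \supseteq T_*$, for instance a Skolemization, and find the failing model there, since any $M_* \models T_{**}$ is also a model of $T_*$.

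I anticipate no serious obstacle here: the Observation is a bookkeeping rearrangement of the definition of $\tlf^*_1$-maximality combined with the fact that $\tdlo$ sits in the maximal $\tlf^*_1$-class, plus the trivial monotonicity of saturation in its cardinal parameter. The only substantive ingredient invoked from the earlier sections is the maximality fact for $SOP_2$; everything else is a direct reading of Definition~\ref{d:trs} and Summary~\ref{s:sum}(2).
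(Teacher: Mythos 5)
Your proposal is correct and follows the same route the paper takes: the paper justifies the Observation only by the preceding remark that linear order is $\tlf^*$-maximum (via Fact \ref{maxl-fact}, since $\tdlo$ has $SOP_2$) together with the unfolding of Definition \ref{d:trs} and Summary \ref{s:sum}(2), which is exactly your argument. Your two added clarifications --- that non-$\mu$-saturation suffices because $\mu \leq 2^\mu \leq \lambda$ under Hypothesis \ref{h:mu}, and that the Skolemization remark is the parenthetical in \ref{s:sum}(2) --- are correct and merely make explicit what the paper leaves implicit.
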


\begin{hyp} \label{n2b}
In the rest of this section we will assume: 
\begin{enumerate}
\item[(a)] $T_*$ is some theory containing Skolem functions, $|T_*| \leq \lambda$, such that: \footnote{The proof works for 
a larger class of $T_*$ than just the countable ones.}
\begin{enumerate}
\item[(i)] $T_*$ interprets $\trg$, with edge relation $R = R_{\operatorname{rg}}$, 
and 
\item[(ii)] $T_*$ interprets $\tdlo$, with ordering $< ~= ~<_{\operatorname{dlo}}$. 
\end{enumerate}  
\item[(b)] $\mk$ is the class of index models $\mk_\lambda$ from $\ref{d8}$.
 
\item[(c)] We will say ``$I \in \mk$ is separated'' when for all $\alpha < \lambda$, $|P^I_{\alpha}| \leq 1$.
 \end{enumerate}
\end{hyp}

Separated $I$ have exactly no more than one element of each color. Note that separation puts no restrictions on the linear order 
other than size, so e.g. there are separated $I \in K$ which contain $(\kappa_1, \kappa_2)$-cuts for any $\kappa_1 + \kappa_2 \leq \lambda$.

For the remainder of this section, will use the following class of templates. 

\begin{defn} \label{k4} 
Let $\Upsilon$ be the class of templates $\Phi$ such that:
\begin{enumerate}
\item[(a)] $ \Phi $ is proper for $\mk$, 
\item[(b)] $I \in \mk$ implies $\GEM (I, \Phi) \models T_*$, 
\item[(c)] $s < t$ implies $a_s <_{\operatorname{dlo}} a_t$, informally, ``the template represents the order.'' 
\end{enumerate}
We use $\leq = \leq_{\Upsilon}$ for the natural partial order on elements of $\Upsilon$, as in $\ref{d:order}$. 
\end{defn}

We'll start with the key step in the saturation half of the argument. 
{A comment on strategy:} in Claim \ref{m17} below we have a type over some $M = \GEM(I, \Phi)$, and we'd like to choose 
a larger template $\Psi$ which will always guarantee its realization.\footnote{Recall from \ref{d:order} that $\GEM_{\tau(\Phi)}(I,\Phi) \preceq \GEM_{\tau(\Phi)}(I, \Psi)$ when $\Phi \leq \Psi$.}
A naive approach might be to say: let's realize $p$ in some elementary extension $M^\prime$, name this realization by 
a constant $c$, and then apply \ref{d:ramsey-exp} with $I, \Phi, M, M^\prime$, in the hopes that the template $\Psi$ we get back 
will ensure that $\vp(c,\bar{a}_{\bar{t}})$ holds in $\GEM(I, \Psi)$ for every $\vp(x,\bar{a}_{\bar{t}})$ in the type. 
There are two concerns:
\begin{enumerate}
\item \ref{d:ramsey-exp}(a) requires the 
index model to be sufficiently saturated, and in \ref{m17} our $I$ is far from saturated. 
\item Templates smooth things out: if $c \in \tau(M^\prime)$ then $c \in \tau(\Psi)$, but 
just because $\vp(c,\bar{a}_{\bar{t}})$ holds in $M^\prime$ 
doesn't mean its truth will carry over to $\Psi$. Only things which occur uniformly (in the sense of 
\ref{d:r1}(c)(ii), with $M^\prime$ here for $M$ there) are sure to be picked up by $\Psi$. 
\end{enumerate}
As we will see, the way out of both these concerns is to first replace $I$ by a sufficiently saturated 
$J$, and replace $p$ by a corresponding larger type $q$. The type $q$ will be ``many copies of $p$'' in a suitable sense, 
and provided it is consistent, any new constant $c$ realizing it will be ``reflected'' to $\Psi$ as indeed being a realization.

\begin{claim} \label{m17} 
Assume $\Phi \in \up$, $I \in \mk$ is separated, $M = \GEM(I, \Phi)$ and $p \in \ts_{R}(M)$. Then there is $\Psi \in \up$ and a constant 
$c \in \tau(\Psi) \setminus \tau(\Phi)$ such that $\Phi \lequ \Psi$ and if $J \in \mk$ and $h$ embeds $I$ into $J$ then $c$ realizes $h(p)$, defined naturally. 
\end{claim}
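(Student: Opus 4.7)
The plan is to realize $p$ uniformly over a sufficiently saturated index model and then use the Ramsey property of $\mk$ (Corollary \ref{d:ramsey-exp}) to extract a template $\Psi$ that remembers this realization as a new constant. This addresses both concerns flagged just before the claim: saturation can be required on a larger $J \supseteq I$, and uniformity in $J$ is precisely what the reflection clause of \ref{d:r1}(ii) needs in order to transfer a truth into $\Psi$.

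Choose $J \in \mk$ which is $\aleph_0$-saturated and contains $I$ as a substructure, and set $N = \GEM(J, \Phi) \succeq M$ with skeleton $\langle \bar{a}_s : s \in J \rangle$ extending the skeleton of $M$. Write the parameters of $p$ as $m_j = \sigma_j(\bar{a}_{\bar{t}_j})$, so $p(x) = \{R(x, \sigma_j(\bar{a}_{\bar{t}_j}))^{\epsilon_j} : j < \kappa\}$ (adding $x \neq m_j$ if $p$ is non-algebraic, handled identically). Define a set of formulas $q(x)$ over $N$ as follows: for every finite $F \subseteq \kappa$ and every tuple $(\bar{s}_j)_{j \in F}$ from $J$ whose \emph{concatenated} qf-type in $J$ equals that of $(\bar{t}_j)_{j \in F}$ in $I$, include the conjunction $\bigwedge_{j \in F} R(x, \sigma_j(\bar{a}_{\bar{s}_j}))^{\epsilon_j}$. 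By qf-indiscernibility of the skeleton (\ref{d:t-e59}(3)), each such finite conjunction has the same realizability in $N$ as the corresponding conjunction at $(\bar{t}_j)_{j\in F}$, which is a finite subset of $p$ and so realized in $M \subseteq N$. Hence $q$ is finitely satisfiable, and we may realize $q$ by some $c$ in an elementary extension $N^+ \succeq N$. Insisting on matching the \emph{concatenated} qf-type (rather than matching pointwise) is exactly what prevents parallel copies from colliding on equal parameters with inconsistent truth values, and is the main obstacle one has to watch in this argument.

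Expand $N^+$ by a new constant symbol naming $c$, re-Skolemize if needed, and apply Corollary \ref{d:ramsey-exp} with inputs $J$, $\Phi$, $N$ and this expansion: we obtain $\Psi$ proper for $\mk$ with $\Phi \lequ \Psi$, $\tau(\Psi) \supseteq \tau(N^+) \ni c$, which reflects the skeleton $\langle \bar{a}_s : s \in J \rangle$ in the sense of \ref{d:r1}(ii). For each $j < \kappa$ and each $\bar{s} \in J$ with $\tpqf(\bar{s}, \emptyset, J) = \tpqf(\bar{t}_j, \emptyset, J)$, by construction of $q$ the formula $R(c, \sigma_j(\bar{a}_{\bar{s}}))^{\epsilon_j}$ holds in $N^+$; applying the reflection clause to the $\tau(N^+)$-formula $\phi(\bar{y}) = R(c, \sigma_j(\bar{y}))$ transfers this truth into $\GEM(J, \Psi)$. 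Specializing to $\bar{s} = \bar{t}_j \in I \subseteq J$ shows that $c$ realizes $p$ in $\GEM(J, \Psi)$. Also $\Psi \in \up$: property (b) of \ref{k4} holds because every $T_*$-sentence is uniformly (vacuously) true in $N^+$ and hence reflected into each $\GEM(J', \Psi)$, and (c) is inherited from $\Phi$ via $\Phi \lequ \Psi$.

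For an arbitrary $J' \in \mk$ and embedding $h : I \to J'$, since $\mk$-embeddings preserve qf-types, $h(\bar{t}_j) \in J'$ realizes the same qf-type in $J'$ as $\bar{t}_j$ does in $I$. The truth value of a formula on the skeleton of $\GEM(J', \Psi)$ depends only on $\Psi$ applied to the corresponding index qf-type (by \ref{d:t-e59}(3) again), so $R(c, \sigma_j(\bar{a}_{h(\bar{t}_j)}))^{\epsilon_j}$ holds in $\GEM(J', \Psi)$ for every $j$, i.e.\ $c$ realizes $h(p)$ as required.
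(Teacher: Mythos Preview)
There is a genuine gap in your finite-satisfiability argument for $q$. You verify that each \emph{individual} conjunction in $q$ is satisfiable in $N$ (by matching the concatenated qf-type back to $I$ and using indiscernibility), but finite satisfiability of $q$ requires that any finite \emph{collection} of such conjunctions is jointly satisfiable. Take two singleton conjunctions: $R(x,\sigma_\alpha(\bar a_{\bar s}))^{\epsilon_\alpha}$ with $\tpqf(\bar s)=\tpqf(\bar t_\alpha)$, and $R(x,\sigma_\beta(\bar a_{\bar s'}))^{\epsilon_\beta}$ with $\tpqf(\bar s')=\tpqf(\bar t_\beta)$. Nothing you have said constrains the \emph{joint} qf-type of $\bar s^\smallfrown \bar s'$, so you cannot transport the pair back to $\bar t_\alpha, \bar t_\beta$ by indiscernibility. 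In the random graph the only obstruction to joint satisfiability is a collision $\sigma_\alpha(\bar a_{\bar s})=\sigma_\beta(\bar a_{\bar s'})$ with $\epsilon_\alpha\neq\epsilon_\beta$, and ruling this out is exactly the substantive content of the claim. Your $q$ is in fact logically equivalent to the paper's $q$ (the singleton-$F$ instances generate all the larger conjunctions), so your ``concatenated qf-type'' device has not bypassed the difficulty.

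The paper's proof spends essentially all of its effort on precisely this collision analysis, and crucially uses the hypothesis that $I$ is \emph{separated}: given a hypothetical collision in $J$, one rearranges the two tuples interval-by-interval (the argument labeled $(\star)$) so that the collision can be pulled back to a collision already present in $p$ over $I$, contradicting that $p$ is a type. You never invoke separation of $I$ anywhere, which is a strong signal that the core step is missing. Once the consistency of $q$ is established, your final two paragraphs (realize $q$, name by a constant, apply Corollary~\ref{d:ramsey-exp}, transfer along $h$) are correct and match the paper's conclusion.
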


\begin{proof}
 
We begin with $M = \GEM(I, \Phi)$ and $p \in \ts_R(M)$. Fix $J$ such that $I \subseteq J \in \mk$ and $J$ is $\aleph_0$-saturated. 
Let $N  = \GEM(J, \Phi)$. By \ref{c:nice} $M \preceq N$, so we will identify the sequence 
$\langle \bar{a}_t : t \in I \rangle$ which generates $M$ with a subsequence of $\langle \bar{a}_t : t \in J \rangle$.

By quantifier elimination, $p$ is of the form $\{ R(x,b_\alpha)^{\ii_\alpha} \land x \neq b_\alpha ~:~ \alpha < \kappa \}$ for some $\kappa$, where each 
$\ii_\alpha \in \{ 0, 1 \}$. As $M$ is generated by $\{ \bar{a}_t : t \in I \}$, each $b_\alpha$ 
may be written as 
$\sigma^M_\alpha(\bar{a}_{\bar{t}_\alpha})$ for some $\tau(\Phi)$-term $\sigma_\alpha$ and some $\bar{t}_\alpha \in \inc(I)$. 
This representation may not be unique; choose one, subject to $\lgn(\bar{t}_\alpha)$ being minimal but $\geq 1$. Then we may write $p$ as 
\begin{equation} \label{eq:p}
p(x) = \{ R(x, \sigma^M_\alpha(\bar{a}_{\bar{t}_\alpha}))^{\ii_\alpha} \land 
x \neq \sigma^M_\alpha(\bar{a}_{\bar{t}_\alpha}) : \alpha < \kappa \}. 
\end{equation}
Now working in $N$, consider the set of formulas 
\begin{equation} \label{eq:q}
q(x) = \{ R(x, \sigma^N_\alpha(\bar{a}_{\bar{s}}))^{\ii_\alpha} 
\land x \neq \sigma^M_\alpha(\bar{a}_{\bar{s}})
: \alpha < \kappa, \tpqf(\bar{s}, \emptyset, J) = 
\tpqf(\bar{t}_\alpha, \emptyset, I) \}. 
\end{equation} 
Let's show that $q(x)$ is consistent. It suffices to prove that whenever 
\begin{equation} 
R(x, \sigma^N_\alpha(\bar{a}_{\bar{v}}))^{\ii_\alpha}  \in q \mbox{ and } R(x, \sigma^N_\beta(\bar{a}_{\bar{w}}))^{\ii_\beta}  \in q 
\end{equation}
\emph{if} $\sigma^N_\alpha(\bar{a}_{\bar{v}}) = \sigma^N_\beta(\bar{a}_{\bar{w}})$ \emph{then} $\ii_\alpha = \ii_\beta$. 

Fix for awhile $\alpha$ and $\beta$ and suppose for a contradiction that 
\begin{equation} 
\label{eq:as}
\sigma^N_\alpha(\bar{a}_{\bar{v}_*}) = \sigma^N_\beta(\bar{a}_{\bar{w}_*}) = {b} 
\mbox{ but } \ii_\alpha \neq \ii_\beta. 
\end{equation} 
The contradiction will amount to reducing the collision in $q$ to a collision already in $p$. 
After inessential changes (changing the order of the variables if needed) we may assume 
\begin{equation}
\label{4-a}
 \bar{v}_* = \bar{u} ^\smallfrown \bar{v}, ~ \bar{w}_* = \bar{u} ^\smallfrown \bar{w} 
\end{equation}
where $\bar{u}$, $\bar{v}$, $\bar{w}$ are each in strictly increasing order, so without repetition, and also 
their concatenation $\bar{u}^\smallfrown \bar{v} ^\smallfrown \bar{w}$ is without repetition 
 (so in particular the sequences $\bar{v}$ and $\bar{w}$ share no elements).  For later use, we record the translation of (\ref{eq:as}) via (\ref{4-a}):  
\begin{equation} \label{eqtrans}
  \sigma^N_\alpha(\bar{a}_{\bar{u}^\smallfrown \bar{v} }) = \sigma^N_\beta(\bar{a}_{\bar{u}^\smallfrown \bar{w}}). 
\end{equation}
We now make some observations about the structure of our sequence. 
Let $\bar{u} = \langle u_\ell : \ell < \lgn(\bar{u}) \rangle$ list $\bar{u}$ in increasing order.  
By ``an interval of consecutive elements of $\bar{u}$'' 
we will mean a set of elements $v_i$, $w_j$ which are all 
less than $u_0$, or all greater than $u_{\lgn(\bar{u})-1}$, or all strictly between $u_i, u_{i+1}$ for some 
$0 \leq i < \lgn(\bar{u})-1$. 

Let us justify $(\star)$ that without loss of generality, 
within each interval of consecutive elements of $\bar{u}$, 
all elements of $\bar{v}$ falling in this interval are strictly below all elements of $\bar{w}$ falling in the same interval: formally, 
if $v_k, w_j$ fall in the same interval in $\bar{u}$, then $v_k < w_j$. 
Suppose $(\star)$ fails, that is, suppose we chose our $\bar{v}_*$, $\bar{w}_*$ so that the sum over all intervals of 
the number of elements of $\bar{w}$ less than elements of $\bar{v}$ in each given interval is minimized, but 
we were not able to choose this number to be zero. Then in our example, within at least one interval, say\footnote{if one of the endpoints is 
$+\infty$ or $-\infty$, the same argument applies substituting this notation.} $(u_i, u_{i+1})$ of $\bar{u}$, we have 
elements $v_k, w_j$ such that 
\begin{equation}
\begin{split}
u_i < \{ v \in \bar{v} : ~u_i < v < v_k \} \cup  \{ & w \in \bar{w} :  ~u_i < w < w_j  \} \\ 
& < w_j < v_k <  \\ 
 \{ v \in \bar{v} :  v_k < & ~ v < u_{i+1} \} \cup  \{ w \in \bar{w} : ~ w_j < w < u_{i+1} \} < u_{i+1}.  \\
\end{split}
\end{equation}
where of course, some or all of the sets in the first and third lines may be empty. 
Since $J$ is $\aleph_0$-saturated, we may choose $w^\prime_j, v^\prime_k$ of the same colors as 
$w_j$, $v_k$ respectively, so that  
\[ w_j < v^\prime_k < w^\prime_j < v_k.  \]
Then since quantifier-free type follows from color and order, and we've chosen 
\begin{equation}
\begin{split}
\{ v \in \bar{v} : ~u_i < v < v_k \} \cup  \{ & w \in \bar{w} :  ~u_i < w < w_j  \} \\ 
& < w_j < v^\prime_k < w^\prime_j <  v_k <  \\ 
 \{ v \in \bar{v} :  v_k < & ~ v < u_{i+1} \} \cup  \{ w \in \bar{w} : ~ w_j < w < u_{i+1} \}. \\
\end{split}
\end{equation}
it follows that writing $\bar{v}^\prime$ for $\bar{v}$ in which $v_k$ is replaced by $v^\prime_k$, and 
writing $\bar{w}^\prime$ for $\bar{w}$ in which $w_j$ is replaced by $w^\prime_j$, the quantifier-free type 
does not change, i.e. $\qftp(\bar{w}, \emptyset, J) = \qftp(\bar{w}^\prime, \emptyset, J)$ and 
$\qftp(\bar{v}, \emptyset, J) = \qftp(\bar{v}^\prime, \emptyset, J)$. Since the change is within an interval, it 
follows that 
$\qftp(\bar{u} ^\smallfrown \bar{v} ^\smallfrown \bar{w}, \emptyset, J) =$ 
$\qftp(\bar{u}~ ^\smallfrown {\bar{v}^\prime}~ ^\smallfrown \bar{w}, \emptyset, J)$ and 
$\qftp(\bar{u}~^\smallfrown {\bar{v}^\prime} ~^\smallfrown \bar{w}, \emptyset, J) = 
\qftp(\bar{u}~^\smallfrown {\bar{v}^\smallfrown} ~{\bar{w}^\prime},\emptyset, J)$. 
So by (\ref{eqtrans}) and definition of $\GEM$-model, 
\begin{equation}
\begin{split}
  \sigma^N_\alpha(\bar{a}_{\bar{u}^\smallfrown \bar{v} }) = &  \sigma^N_\beta(\bar{a}_{\bar{u}^\smallfrown \bar{w}}) \\
  \sigma^N_\alpha(\bar{a}_{{\bar{u} }^\smallfrown \bar{v}^\prime }) = &  \sigma^N_\beta(\bar{a}_{\bar{u}^\smallfrown \bar{w}}) \\
   \sigma^N_\alpha(\bar{a}_{\bar{u}^\smallfrown \bar{v} }) = &  \sigma^N_\beta(\bar{a}_{\bar{u}^\smallfrown \bar{w}^\prime}) \\
\mbox{ so by } & \mbox{ transitivity of equality } \\
  \sigma^N_\alpha(\bar{a}_{\bar{u}^\smallfrown \bar{v}^\prime }) = &  \sigma^N_\beta(\bar{a}_{\bar{u}^\smallfrown \bar{w}^\prime}).
\end{split}
\end{equation}
This leaves us with an example of collision (\ref{eq:as}) in which the sum over all intervals of 
the number of elements of $\bar{w}$ less than elements of $\bar{v}$ in each given interval
is strictly smaller than for  $\bar{v}_*$, $\bar{w}_*$.  This contradiction (or this argument repeated) 
proves $(\star)$. 

\br

With $(\star)$, it is now a simple matter to reduce the contradiction (\ref{eq:as}) to a contradiction in  
the type $p$. 
Let $\bar{u}_I$, $\bar{v}_I$ from $I$ realize the same quantifier free type in $J$ 
as $\bar{u}, \bar{v}$, so in particular $\lgn(\bar{u}_I) = \lgn(\bar{u})$ and 
$\lgn(\bar{v}_I) = \lgn(\bar{v})$, and note that $\bar{u}_I, \bar{v}_I$  
are unique by separability of $I$. 
Likewise, let $\bar{u}^\prime_I$, $\bar{w}_I$ from $I$ realize the same quantifier-free type as
$\bar{u}, \bar{w}$ in $J$. Note as $I$ is separated, $\bar{u}^\prime_I = \bar{u}_I$. 
Summarizing, we have 
\begin{equation}
\label{eq15}
 \bar{u}_I, \bar{v}_I, \bar{w}_I  \in {^{\omega >}I} 
\end{equation} 
with $\tpqf(\bar{u}_I ~^\smallfrown \bar{v}_I, \emptyset, J) = \tpqf(\bar{u} ~^\smallfrown \bar{v}, \emptyset, J)$ and 
$\tpqf(\bar{u}_I ~^\smallfrown \bar{w}_I, \emptyset, J) = \tpqf(\bar{u} ~^\smallfrown \bar{w}, \emptyset, J)$. 
Next, let's verify that: 
\begin{equation} \label{e:choose}
\mbox{ we may choose $\bar{v}_2$, $\bar{w}_2 \in {^{\omega >}J}$ such that: }
\end{equation}
\begin{enumerate}
\item[(a)] $(\bar{u}, \bar{v}_2, \bar{w}_2)$ realize the same quantifier-free type in $J$ as 
$(\bar{u}_I, \bar{v}_I, \bar{w}_I)$, i.e. their concatenations have the same type, and the length of each 
piece is the same. 
\item[(b)] $\bar{v}_2, \bar{v}_I$ realize the same quantifier-free type over $\bar{u}^\smallfrown \bar{w}_I$,
\item[(c)] $\bar{w}_2, \bar{w}_I$ realize the same quantifier-free type over $\bar{u}^\smallfrown \bar{v}_I$. 
\end{enumerate}
This is easy to do by $(\star)$. That is, since each interval of $\bar{u}$ is of the form\footnote{again, with appropriate substitutions 
for endpoints of the first and last interval, if needed.}
\[  u_i < \{ v \in \bar{v} : u_i < v < u_{i+1} \} < \{ w \in \bar{w} : u_i < w < u_{i+1} \} < u_{i+1}, \]
using the $\aleph_0$-saturation of $J$, we may choose $\bar{v}_2$, $\bar{w}_2$ interval by interval so that 
\begin{equation} 
\begin{split}  u_i < \{ v \in \bar{v} : u_i < v & < u_{i+1} \} < \\ 
\{ v \in \bar{v}_2&  : u_i < v < u_{i+1} \} \cup \{ w \in \bar{w}_2 : u_i < w < u_{i+1} \} 
\\ 
< ~ & \{ w   \in \bar{w} : u_i < w < u_{i+1} \}  < u_{i+1} 
\end{split}
\end{equation}
noting that the only condition we have imposed is the place in the order where we choose the new elements; the quantifier-free type of 
$\bar{v}_2$, $\bar{w}_2$, and their ordering or collision amongst themselves, 
is free to be determined by (a). As quantifier-free type in $J$ is determined by order and color, this condition on 
ordering is clearly enough to satisfy (b) and (c). 
This completes our justification of (\ref{e:choose}). 

\br
Now we apply indiscernibility (that is, the fact that we are working in a $\GEM$ model) 
and transitivity of equality.  Recalling (\ref{eqtrans}), 
 (\ref{e:choose})(b) and (c) mean that 
\begin{equation}
\label{eqxb} \sigma^N_\alpha(\bar{a}_{\bar{u}^\smallfrown \bar{v}_2}) = \sigma^N_\beta (\bar{a}_{\bar{u}^\smallfrown \bar{w}}) 
\mbox{ and } 
 \sigma^N_\alpha(\bar{a}_{\bar{u}^\smallfrown \bar{v}}) = \sigma^N_\beta (\bar{a}_{\bar{u}^\smallfrown \bar{w}_2}) 
\end{equation}
so by (\ref{eqtrans}) and transitivity of equality, 
\begin{equation}
\label{eqxc}
\sigma^N_\alpha(\bar{a}_{\bar{u}^\smallfrown \bar{v}_2}) =  \sigma^N_\beta (\bar{a}_{\bar{u}^\smallfrown \bar{w}_2}). 
\end{equation}
As we are in a $\GEM$-model, (\ref{eqxc}) along with (\ref{e:choose})(a) implies that 
\[ \sigma^N_\alpha(\bar{a}_{{\bar{u}_I} ~^\smallfrown \bar{v}_I}) = \sigma^N_\beta (\bar{a}_{{\bar{u}_I}~ ^\smallfrown \bar{w}_I}) \]
Recalling that $\ii_\alpha \neq \ii_\beta$, and that $q$ was built directly from $p$ in (\ref{eq:q}), 
(\ref{eqxc}) contradicts the assumption that $p$ is a type. 

This contradiction proves that $q$ is consistent. 

Since $q$ is consistent, in some elementary extension $N^\prime$ of $N$ there is an element which realizes it. 
Let $N^{\prime}_c$ be $N^\prime$ expanded by this constant $c$, and by Skolem functions. (Note that once we add this constant, 
$\bar{a}_J$ may no longer be indiscernible.) 
Apply \ref{d:ramsey-exp} in the case where $N^+ = N^\prime_c$. 
Let $\Psi$ be the template returned, which will be 
proper for $\mk$.  By \ref{d:ramsey-exp} and equation (\ref{eq:q}) above, $\Psi$ has the property that for every $\alpha < \kappa$, if 
$\xr_\alpha = \tpqf(\bar{t}_\alpha, \emptyset, I)$ then the formula $R(c, \sigma_\alpha($--$))^{\ii_\alpha}$ belongs to $\Psi(\xr_\alpha)$. 
Thus, $\Phi \lequ \Psi$ and if $J \in \mk$ and $h$ embeds $I$ into $J$ then $c$ realizes $h(p)$ in $\GEM(J, \Psi)$. 
\end{proof}  

\begin{cor} \label{m20} For every $\Phi \in \up$ there is $\Psi \in \up$ such that $\Phi \leq \Psi$ and 
for every separated $I \in \mk$, $\GEM(I, \Phi) \rstr \{ R_{\operatorname{rg}} \}$ is $\mu$-saturated. 
\end{cor}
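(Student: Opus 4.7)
The plan is to iterate Claim \ref{m17} along a chain of length $\lambda$. Explicitly, I will construct $\langle \Phi_\alpha : \alpha \leq \lambda \rangle$ in $\upk$ with $\Phi_0 = \Phi$, letting $\Phi_\beta = \bigcup_{\alpha<\beta} \Phi_\alpha$ at limits and obtaining $\Phi_{\alpha+1}$ from $\Phi_\alpha$ by one application of Claim \ref{m17}. The desired template is $\Psi := \Phi_\lambda$. At limits, the union lies in $\upk$ because by Convention \ref{c:nice} and \ref{d:rmk} the chain of reducts $\GEM_{\tau(\Phi_\alpha)}(I, \Phi_\alpha)$ is elementary for any $I \in \mk$, so the union admits a natural template structure. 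Each successor step adds one constant and countably many Skolem functions, so $|\tau(\Phi_\alpha)| \leq |\tau(\Phi)| + |\alpha| + \aleph_0 \leq \lambda$ throughout.

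The bookkeeping is organized around \emph{generic $R$-type specs}: tuples $\langle (\sigma_\beta, \xr_\beta, \ii_\beta) : \beta < \kappa \rangle$ with $\kappa < \mu$, where $\sigma_\beta$ is a $\tau(\Phi_\alpha)$-term for some current $\alpha$, $\xr_\beta$ is an abstract quantifier-free type (color sequence and ordering) in the language of $\mk$, $\ii_\beta \in \{0, 1\}$, and whose instantiation is consistent as a type over some $\GEM(I, \Phi_\alpha)$ for some separated $I$ realizing all the $\xr_\beta$. Because $\lambda = \lambda^{<\mu}$ and $|\tau(\Phi_\alpha)| \leq \lambda$, at each stage there are at most $\lambda$ such specs. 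A standard diagonal enumeration arranges that every spec arising at any stage is addressed at some later successor stage $\alpha+1$: pick the designated pending spec $s_\alpha$, choose a witnessing separated $I$, instantiate $s_\alpha$ to a type over $\GEM(I, \Phi_\alpha)$, and apply Claim \ref{m17} to obtain $\Phi_{\alpha+1}$ with $\Phi_\alpha \lequ \Phi_{\alpha+1}$ and a fresh constant $c_\alpha$ realizing the instantiation.

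The payoff is that by the reflection property used in the proof of \ref{m17} (via the Ramsey property of $\mk$, Fact \ref{kormur}), the constant $c_\alpha$ realizes $s_\alpha$ universally: in every $\GEM(I', \Psi)$ with $I' \in \mk$, and every tuple $\bar{s}_\beta \in I'$ realizing $\xr_\beta$, the formula $R(c_\alpha, \sigma_\beta(\bar{a}_{\bar{s}_\beta}))^{\ii_\beta}$ holds. Now for any separated $I$ and any $R$-type $p$ over $\GEM(I, \Psi)$ of size $<\mu$: since $\lambda = \lambda^{<\mu}$ forces $\cf(\lambda) \geq \mu$, the parameters of $p$ lie in $\GEM(I, \Phi_\alpha)$ for some $\alpha < \lambda$, so $p$ is the instantiation of some spec at stage $\alpha$; bookkeeping handles this spec at some $\beta \geq \alpha$, and $c_\beta$ realizes $p$ in $\GEM(I, \Phi_{\beta+1}) \subseteq \GEM(I, \Psi)$. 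The main obstacle is setting up the diagonal bookkeeping so that specs first definable only after new constants are introduced are still addressed in time; this is routine since the total number of (stage, spec) pairs is $\lambda \cdot \lambda = \lambda$.
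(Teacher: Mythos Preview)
Your proof is correct and follows essentially the same strategy as the paper: iterate Claim \ref{m17} $\lambda$ many times with suitable bookkeeping, using $\lambda = \lambda^{<\mu}$ to control the number of objects to handle and $\cf(\lambda)\geq\mu$ to ensure every small type falls into some initial segment of the chain.

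The only difference is organizational. The paper does a nested iteration: first fix a separated $I$, enumerate the $\leq\lambda$ random-graph types over $\GEM(I,\Phi)$, handle each, repeat $\lambda$ times to catch types over the growing model, and then run this whole process again over an enumeration of the $\leq\lambda$ isomorphism types of separated $I$ (each cofinally often). You instead abstract out ``specs'' $\langle(\sigma_\beta,\xr_\beta,\ii_\beta):\beta<\kappa\rangle$ and do a single diagonal of length $\lambda$. Your organization is a bit cleaner and makes explicit the point (present in the proof of \ref{m17} but not in its statement) that the template records $R(c,\sigma_\beta(\mbox{--}))^{\ii_\beta}\in\Psi(\xr_\beta)$, so the new constant works in \emph{every} $\GEM(I',\Psi)$, not merely those admitting an embedding of the particular $I$ used. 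The paper's version, on the other hand, makes the counting of separated $I$ (at most $2^\mu\leq\lambda$ up to isomorphism) explicit, which your spec-based bookkeeping absorbs into the count of qf-types $\xr_\beta$. Either way the arithmetic is the same and the argument goes through.
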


\begin{proof} 
This is a counting argument. As we'll appeal to similar arguments again, here let us give the details. 
Let $\Phi$ be given. 
Fix for a moment some $I \in \mk$ which is separated, therefore of size $\leq\mu$. Recall from our hypotheses for the section that 
$\lambda, \mu$ are fixed, $\lambda = \lambda^{<\mu}$ and $|\tau(\Phi)| \leq \lambda$.  So 
$M = \GEM(I, \Phi)$ satisfies $|M| \leq \lambda$.  This means $M$ has $\lambda^{<\mu} = \lambda$ subsets of size $<\mu$, and 
over each such subset $A$, it has at most $2^{|A|} \leq \lambda$ types. Let $\langle p_\alpha : \alpha <\lambda \rangle$ be an enumeration 
of all such types. By induction on $\alpha$, we may build a $\leq$-continuous increasing chain of templates $\Phi_\alpha$, where 
$\Phi_{\alpha+1}$ is the result of applying Claim \ref{m17} in the case $\Phi = \Phi_{\alpha}$, $I$, and $p = p_\alpha$. 
Let $\Psi$ be the union of this chain of templates, recalling \ref{o:inc-seq}. Let $N = \GEM(I, \Psi)$. Then $|N| \leq \lambda$. Moreover, 
$M \rstr_{\{ R_{\operatorname{rg}} \}} ~\preceq~ N \rstr_{\{ R_{\operatorname{rg}} \}}$ and all random graph types over subsets of 
$M$ of size $<\mu$ are realized in $N$. 
By repeating this construction $\lambda$-many times, we obtain a template 
\begin{equation}
\label{eq:phi} 
\Psi_{I,\Phi} 
\end{equation} which is $\geq \Phi$ and has the property that for our given $I$, 
$\GEM(I, \Psi)$ has size $\leq \lambda$ and is saturated for all random graph types over subsets of size $<\mu$. 
(This is already enough for the proof of 
Theorem \ref{t:trg-s} below.)

To find a single $\Psi$ which works for all separated $I$, first note that there are a bounded number of separated $I \in \mk$, up to isomorphism. 
(In fact, there are no more than $\lambda$: any separated $I$ has size $\leq \mu$, and any separated $J$ occurs as a subset of some separated $I$ 
of size $\mu$. So there are no more than $2^\mu$ separated $I$ of size exactly $\mu$, up to isomorphism; each has $\leq 2^\mu$ subsets, 
up to isomorphism, and recall $\lambda \geq 2^\mu$.) 
So we can enumerate all such $I$ as $\langle I_\alpha : \alpha < \lambda \rangle$, each occurring cofinally often, and 
build a $\leq_{\up}$-increasing continuous chain of templates $\Phi_\alpha$, where each $\Phi_{\alpha+1}$ is built as 
$\Psi_{I, \Phi_\alpha}$ from (\ref{eq:phi}) above.  
The union of this chain will be our desired template. 
\end{proof}

Now for the non-saturation half of the argument. In the next claim, recall that ``cut'' means ``unfilled cut''. 
A comment on strategy: 
in any $J \in \mk$,  
the quantifier free type of a tuple is determined by its order type and the colors of its elements. 
$I$ is separated, so each element 
has its own color. 
We consider $I$ as a subset of some saturated $J$ where, say, the order is dense and some fixed color occurs densely often. 
Note that the $f$ we find isn't an embedding of $Y \cup Z$ to $J$, 
because all elements in the range of $f$ will have the same color. 
It would suffice to let $f$ choose a sequence suitably cofinal in each side, all of the same color.

\begin{claim} \label{m23}
Suppose $I \in \mk$ is separated, $\kappa$ is an infinite regular cardinal, and 
$(\langle s_\alpha : \alpha < \kappa \rangle, \langle t_\alpha : \alpha < \kappa \rangle)$ is 
a cut of $I$.  
Then $\GEM(I, \Phi) \rstr \{ <_{dlo} \}$ is not $\kappa^+$-saturated, and in fact omits the type 
\[ q(x) = \{ (~ s_\alpha ~<_{dlo} ~ x  ~ <_{dlo} t_\alpha ~) : \alpha < \kappa \} .\]
\end{claim}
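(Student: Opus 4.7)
My plan is to argue by contradiction using indiscernibility. Suppose some $b \in M = \GEM(I,\Phi)$ realizes $q(x)$. Since the skeleton generates $M$, we may write $b = \sigma^M(\bar{a}_{\bar{t}})$ for some $\tau(\Phi)$-term $\sigma$ and some finite increasing tuple $\bar{t} \in \inc(I)$. Because $\bar{t}$ is finite, $\kappa$ is infinite, and the cut is unfilled in $I$, there is $\alpha_* < \kappa$ such that every coordinate of $\bar{t}$ is either $\leq s_{\alpha_*}$ or $\geq t_{\alpha_*}$; accordingly split $\bar{t} = \bar{t}'{}^\smallfrown \bar{t}''$ with $\bar{t}'$ below the cut and $\bar{t}''$ above.

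Next I will extend $I$ to a larger index model $J \in \mk$ carrying two ``witness'' elements of the same color. Choose a color $c_*$ not appearing in $\bar{t}$; since $I$ is separated, $\bar{t}$ uses only finitely many of the $\lambda$ available colors, so such $c_*$ exists. Adjoin two fresh elements $r_1 < r_2$ of color $c_*$, placed in $<_J$ so that
\[ s_{\alpha_*} <_J r_1 <_J s_{\alpha_*+1} \quad\text{and}\quad t_{\alpha_*+1} <_J r_2 <_J t_{\alpha_*}. \]
This yields a valid $J \in \mk$ (equivalently, one could start from an $\aleph_0$-saturated $J \supseteq I$ provided by Convention \ref{conv2} and locate $r_1, r_2$ by $\aleph_0$-saturation). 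Set $N = \GEM(J, \Phi)$; by niceness of $\Phi$ we have $M \preceq N$, so $b$ continues to realize $q$ in $N$.

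Now I extract the contradiction. In $J$ the tuples $\bar{t}^\smallfrown \langle r_1 \rangle$ and $\bar{t}^\smallfrown \langle r_2 \rangle$ have the same quantifier-free type: both $r_i$ sit strictly between $\bar{t}'$ and $\bar{t}''$ in $<_J$, and each carries color $c_*$, disjoint from all colors in $\bar{t}$. Applying indiscernibility of $\langle \bar{a}_t : t \in J \rangle$ in $N$ to the formulas $\sigma(\bar{y}) <_{dlo} z$ and $z <_{dlo} \sigma(\bar{y})$ yields
\[ b <_{dlo} \bar{a}_{r_1} \iff b <_{dlo} \bar{a}_{r_2}, \qquad \bar{a}_{r_1} <_{dlo} b \iff \bar{a}_{r_2} <_{dlo} b. \]
On the other hand, clause (c) of Definition \ref{k4} combined with $b \models q$ gives $\bar{a}_{r_1} <_{dlo} \bar{a}_{s_{\alpha_*+1}} <_{dlo} b$ and, symmetrically, $b <_{dlo} \bar{a}_{t_{\alpha_*+1}} <_{dlo} \bar{a}_{r_2}$. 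Feeding these through the equivalences forces both $\bar{a}_{r_1} <_{dlo} b$ and $b <_{dlo} \bar{a}_{r_1}$, which is absurd.

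The only real obstacle is conceptual: in $I$ itself the sequences $\langle s_\alpha\rangle$ and $\langle t_\alpha\rangle$ use pairwise distinct colors, so indiscernibility cannot be leveraged among them directly, which is exactly what the strategy comment before the claim flags. The fix is to enlarge $I$ just enough to place two same-color witnesses straddling the cut, so that the quantifier-free type in $J$ becomes insensitive to which side of the cut we sample and indiscernibility can be applied; the separation hypothesis on $I$ is used precisely to guarantee an unused color $c_*$.
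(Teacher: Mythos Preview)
Your proof is correct and follows essentially the same strategy as the paper: enlarge $I$ to some $J \in \mk$ containing same-colored witnesses on both sides of the cut but outside the finite support $\bar{t}$ of the putative realization, then use quantifier-free indiscernibility together with clause~(c) of Definition~\ref{k4} to derive a contradiction. The paper packages this by defining an entire function $f$ on $Y \cup Z$ sending each $s_\alpha$ and $t_\alpha$ to a nearby element of a fixed color, whereas you extract just the two witnesses $r_1, r_2$ actually needed; this is a clean simplification of the same idea. One small expository point: your closing remark that separation is used ``precisely to guarantee an unused color $c_*$'' is slightly off, since $\bar{t}$ is finite and there are $\lambda$ colors regardless; separation is not really needed for your argument, though it is part of the ambient hypothesis.
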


\begin{proof} 
We'll prove the a priori stronger claim that for some sufficiently saturated $J$ with $I \subseteq J$, $p$ is not 
realized in $EM(J, \Phi)$. This suffices under our global assumption (\ref{c:nice}) that $EM(I, \Phi) \preceq EM(J, \Phi)$. 

Recall assumption \ref{k4}(d) which says that our templates ``represent'' $<_{\operatorname{dlo}}$:  
when $s < t$ are from the index model then 
$a_s \dl a_t$. 
Let $Y = \{ s_\alpha : \alpha < \kappa \}$ and 
let $Z = \{ t_\alpha : \alpha < \kappa \}$.   
Observe that we can find some sufficiently saturated $J \in \mk$, containing $I$, and a function $f$ such that:
\begin{enumerate} 
\item[(a)] $f$ is a function from $Y \cup Z$ to $J$
\item[(b)] if $s \in Y$ then $s <_{J} f(s) <_{J} (Y)_{>s} := \{ d \in Y : s <_{I_1} d \}$. 
\item[(c)] if $t \in Z$ then $(Z)_{<t} <_{J} f(t) <_{J} t$
\item[(d)] if $s \in Y$, $t \in Z$ then $f(s), f(t)$ realize the same quantifier free type in $J$ over 
$(J)_{<s} \cup (J)_{>t}$. 
\end{enumerate}
Note that $p(x)$ implies $\{ (a_{f(s)} <_{dlo} x) \land (x <_{dlo} a_{f(t)}) : (s, t) \in Y \times Z \}$. 
In $\GEM(J, \Phi)$ if there were $a = \sigma(\bar{a}_{\bar{u}})$ realizing $p$, choose $s \in Y$ and $t \in Z$ such that 
$[s, t] \cap \range(\bar{u}) = \emptyset$, which possible simply because $\bar{u}$ is finite. 
Then because we have assumed $a$ realizes $p$, $a$ must satisfy the formula $(a_{f(s)} <_{dlo} x)$ and also the formula $\neg(a_{f(t)} <_{dlo} x)$. 
This contradicts (d), so completes the proof. 
\end{proof}

We now prove Theorem \ref{t:trg-s} from the beginning of the section. 

\begin{proof}[Proof of Theorem \ref{t:trg-s}.] 
Recall \ref{o:sect3} and from \ref{h:mu} the assumption $\lambda = \lambda^{<\mu}$. 
It suffices to show that for every $\kappa^+ \leq \mu$, 
there are $\Phi \in \up$ and $I \in \mk$ such that for $M = \GEM_{\tau(T_*)}(I, \Phi)$, we have that 
$M \rstr_{\{ R_{\operatorname{rg}} \}}$ is 
$\mu$-saturated but $M \rstr_{\{ <_{dlo} \}}$ is not $\kappa^+$-saturated.   
Choose $I \in \mk$ which is separated and has a $(\kappa, \kappa)$-cut  (note that there is a cardinality 
restriction on separated $I$, but in our case $\kappa < \mu$). Let $\Phi$ be from \ref{m20} and apply it to the  
selected $I$. By \ref{m20} and \ref{m23}, $M_{\tau(T_*)}(I, \Phi)$ is as desired.  
\end{proof}

\begin{concl}
$\trg$ is minimum among the $($complete, countable$)$ unstable theories in $\tlf^*_1$ and does not belong to the maximum class. 
\end{concl}

\begin{proof}
By Lemma \ref{l:trg} and Theorem \ref{t:trg-s}.
\end{proof}

\section{$\tfeq$ is minimal among the non-simple theories} \label{s:tfeq}

\begin{theorem} \label{t:tfeq}
$\tfeq$ is $\tlf^*_1$-minimum among the complete countable non-simple theories. 
\end{theorem}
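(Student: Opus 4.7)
\noindent\emph{Plan.} I will parallel the proof of Lemma~\ref{l:trg}, with $TP_2$ playing the role of the independence property and the parametrized equivalence structure of $\tfeq$ playing the role of the random graph. First I reduce: any complete countable non-simple $T_1$ has either $SOP_2$ or $TP_2$; in the former case $T_1$ is already $\tlf^*_1$-maximum by Fact~\ref{maxl-fact} and we are done, so I may assume $T_1$ has $TP_2$ witnessed by a formula $\vp(\bar x; \bar y)$ which, after the standard strengthening, has pairwise inconsistent ``rows''.

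\noindent\emph{Construction of $T_*$.} In parallel with the countable Skolemized $M$ of the proof of \ref{l:trg}, I build a countable Skolemized $M$ with theory $T_*$ satisfying: (a) $M$ expands $(\mathbb{N}, <)$; (b) $M \rstr \tau(\tfeq)$ is a countable model of $\tfeq$ with its usual predicates $P$ (parameters), $Q$ (elements), and $E$ (ternary); (c) $M \rstr \tau(T_1)$ is a model of $T_1$ containing a $\tau(T_*)$-definable family $\langle c_{i, j} : i, j \in \mathbb{N}\rangle$ witnessing $TP_2$ for $\vp$ with pairwise inconsistent rows; (d) a $\tau(T_*)$-definable injection $F \colon P^M \to \mathbb{N}^M$ assigns each $\tfeq$-parameter a unique ``row''; (e) a $\tau(T_*)$-definable $G \colon P^M \times Q^M \to \mathbb{N}^M$ satisfies $E(a; x, y) \iff G(a, x) = G(a, y)$, so $G(a, \cdot)$ indexes $E_a$-classes; (f) a $\tau(T_*)$-definable $H \colon Q^M \to (\tau(T_1)\text{-sort})^M$ has $\vp(H(x), c_{F(a), G(a, x)})$ for every $a, x$; (g) for each $z$ in the $\tau(T_1)$-sort of $M$, the set $\{i : \vp(z, c_{i, 0})\}$ is bounded, hence pseudofinite in any model of $T_*$ (arrangeable by careful enumeration); (h) $T_*$ codes enough set theory that, in any $N \models T_*$, any $\tau(T_*)$-definable $h \colon [0, n_*) \to \mathbb{N}^N$ on a pseudofinite $n_*$ is realized as $h(i) = G(F^{-1}(i), z)$ for some $z \in Q^N$ (consistent at every finite, hence pseudofinite, stage by $\tfeq$'s amalgamation of class memberships across distinct parameters).

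\noindent\emph{Verification.} Let $N \models T_*$ with $\tau(T_1)$-reduct $\lambda$-saturated; I show the $\tau(\tfeq)$-reduct is $\lambda$-saturated.
\begin{enumerate}[(1)]
\item \emph{Cofinality.} If $\cf(\mathbb{N}^N) = \kappa < \lambda$ via a cofinal sequence $\langle a_\alpha : \alpha < \kappa\rangle$, the partial type $\{\vp(x, c_{a_\alpha, 0}) : \alpha < \kappa\}$ is consistent by $TP_2$ path consistency; by $\lambda$-saturation it is realized by some $z$, but then $\{i : \vp(z, c_{i, 0})\}$ is unbounded in $\mathbb{N}^N$, contradicting (g).
\item \emph{Saturation of $\tau(\tfeq)$-reduct.} Given $p(x) = \{E(a_\alpha; x, x_\alpha)^{\ii_\alpha} : \alpha < \kappa\}$ with $\kappa < \lambda$, translate to the partial $T_1$-type
\[ q(y) = \{\vp(y, c_{F(a_\alpha), G(a_\alpha, x_\alpha)})^{\ii_\alpha} : \alpha < \kappa\}. \]
$q$ is consistent by $TP_2$ along any path $h$ with $h(F(a_\alpha)) = G(a_\alpha, x_\alpha)$ when $\ii_\alpha = 1$ and $h(F(a_\alpha)) \neq G(a_\alpha, x_\alpha)$ when $\ii_\alpha = 0$, pairwise inconsistency of rows handling the negative clauses. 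Realize $q$ by some $y$ using $\lambda$-saturation. From $y$ we extract (by pairwise inconsistency) a $\tau(T_*)$-definable column function $h_y$ agreeing with the above specification on $\{F(a_\alpha)\}$, which by (1) sits inside a pseudofinite interval. Applying overspill (h) to $h_y$ on that interval yields $z \in Q^N$ with $G(a_\alpha, z) = h_y(F(a_\alpha))$ for every $\alpha$, so $z$ realizes $p$.
\end{enumerate}

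\noindent\emph{Main obstacle.} The crux is the back-translation in (2): arranging the coding $(F, G, H)$ and the pseudofinite overspill (h) so that a $T_1$-realization of $q$ canonically descends to a $\tfeq$-realization of $p$. In the $\trg$ case (Lemma~\ref{l:trg}) the bijection between $\mathbb{N}$ and $P^M$ sufficed, whereas for $\tfeq$ we need the more intricate two-variable coding, and it must be compatible with the ``eventual'' property (g) driving the cofinality argument. The underlying principle --- that $\tfeq$'s amalgamation of class patterns aligns with $TP_2$'s path consistency at every pseudofinite scale --- is what makes the argument possible, but axiomatizing this uniformly inside $T_*$ while preserving (g) is the main technical task.
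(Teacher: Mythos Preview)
Your outline is essentially correct and follows the same overall strategy as the paper: reduce to the $TP_2$ case, build a countable $M$ whose theory $T_*$ simultaneously interprets $T_{feq}$, $T_1$, and $(\mathbb N,<)$, align the $T_{feq}$ structure with the $TP_2$ array, and use overspill on a definable function over a pseudofinite interval to convert a realization of the translated type in $T_1$ back into a realization of the original $T_{feq}$-type.

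There is one genuine difference worth noting. For the cofinality step, the paper does \emph{not} use the $TP_2$ array at all: it simply observes that $TP_2$ implies the independence property, invokes Claim~\ref{c:indep} verbatim to obtain a sequence $\langle b_n\rangle$ with the ``eventual truth value'' property, and repeats item~(1) of the proof of Lemma~\ref{l:trg} word for word. Your approach instead re-indexes the $0$-th column of the $TP_2$ array to achieve the boundedness property (g) and uses path consistency. This is valid --- (g) is indeed arrangeable by choosing, for row $i$, a column element avoided by the first $i$ enumerated elements of the $T_1$-sort --- but it entangles (g) with (f): the path realizations $H(x)$ are themselves elements of the $T_1$-sort and must also satisfy (g), which forces the indexing $G$ of $E_a$-classes, the column indexing of the array, and the choice of $H$ to be constructed in a coordinated way. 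This is the issue you flag at the end as ``the main technical task''; the paper simply sidesteps it by keeping the cofinality witness separate from the array. Both routes work, but the paper's buys simplicity by reusing existing machinery, while yours is more self-contained at the cost of a more delicate construction of $M$.
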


\begin{proof}
Let $T_1$ be a complete, countable, non-simple first order theory. Without loss of generality, $T_1$ has $TP_2$ and not 
$SOP_2$, as $SOP_2$ is already sufficient for maximality, Fact \ref{maxl-fact} above. We'll build on the 
proof of Lemma \ref{l:trg}. 

Let $\vp(\bar{x}, \bar{y})$ be a formula which has $TP_2$ for $T_1$. (In what follows, we'll write as if $\ell(x) = \ell(y) =1$, 
but this is again only for simplicity of notation.) That is, in some model of $T_1$, there is an array $\{ a_{i,j} : i < \omega, j<\omega \}$ 
of elements of $M$ (i.e. of ${^{\ell(y)}M}$) such that for any $X \subseteq \omega \times \omega$, 
$\{ \vp(x,a_{i,j}) : (i,j) \in X \}$ is consistent if and only if $(i,j) \in X~\land~(i^\prime,j) \in X \implies i=i^\prime$, i.e. $X$ does not 
contain more than one element from each column. 

We'll assume that $T_1$, $\trg$, $\tfeq$, and $Th(\mathbb{N}, <)$ have disjoint signatures.

Let $M$ be a countable model whose theory $T$ satisfies:
\begin{enumerate}[(a)]
\item $M$ expands $(\mathbb{N}, <)$.

\item $M \rstr \tau(\tfeq)$ is a countable model of $\tfeq$. 

That is, there is a unary predicate $P^M$ which is countably infinite. $P^M$ is partitioned into two infinite sets, 
$P^M_0$ and $P^M_1$. On $P^M_1$, there is an equivalence relation $E^M_0$ which has infinitely many classes, all of which are infinite. 
Finally, there is a function $F^M_0: P^M_0 \times P^M_1 \rightarrow P^M_1$ which essentially chooses, for each $a$ in the set $P_0$, 
a path through the equivalence classes. More formally, for each $(a, b) \in P^M_0 \times P^M_1$, 
$E^M_0 (~ F^M_0(a,b), b~)$ and for any finitely many $b_1, \dots, b_n$, $b^\prime_1, \dots, b^\prime_m$ from $P^M_1$ which are pairwise 
$E^M_0$-inequivalent, there is $a \in P^M_0$ such that 
\[  M \models \bigwedge_{1\leq i \leq n} F_0(a,b_i) = b_i ~\land~ \bigwedge_{1 \leq j \leq m} \neg F_0(a,b^\prime_j) = b^\prime_j. \]

\item $M \rstr \tau(T_1)$ is a countable model of $T_1$, 
containing a sequence $\langle b_i : i < \omega \rangle$ satisfying the 
conclusion of Claim \ref{c:indep}.  

The domain of this model is $Q^M$. $S^M$ is a binary relation 
with $T \vdash S \subseteq Q \times Q$ and $T \vdash$ ``$S$ has $TP_2$'', that is, there is an array $\{ a_{i,j} : i < \omega, j<\omega \}$ 
in $Q^M$ as described above. (\emph{As before, we just let $S$ name $\vp$. Note that the sequence $\bar{b}$ will \emph{not} 
necessarily be a sequence on which $\vp$, or $S$, has $TP_2$; we simply need it to be a sequence on which $\vp$, so $S$, has the 
independence property, which is fine as $TP_2$ implies $IP$.})

\item $F^{M}$ is an injective function from $\mathbb{N}$ into $Q^{M}$ such that for every $a \in Q^M$, 
 for some truth value $\trv$, for every $n$ large enough, $M \models \vp[a, F^{M}(n)]^{\trv}$. 
\\ (\emph{It suffices to let $F^M(n) = b_n$ where $\langle b_n : n < \omega \rangle$ is from Claim \ref{c:indep}.})

\item $G^M$ is a one to one and onto function from $P^M_1$ to $\{ a_{i,j} : i<\omega, j<\omega \} \subseteq Q^M$ 
which respects consistency and inconsistency in the natural way, \emph{i.e.} 
such that for $b, b^\prime \in P^M_1$, $\{ S^M(x,G^M(b)), S^M(x,G^M(b^\prime)) \}$ is inconsistent. 
\\ (\emph{Note that $T$ will record that for all finite $k$, if $b_1, \dots, b_k \in P^M_1$ then} 
\[ \{ S(x, G(b_1)), \dots, S(x,G(b_k)) \} \] \emph{ is consistent in $M$ iff the $b_i$ are pairwise $E^M_0$-inequivalent.})

\item $G^M_*$ is a one to one and onto function from $P^M_1$ to $\mathbb{N}^M$. 

\noindent (\emph{Note that $T$ will record that for every $n \in \mathbb{N}^M$, for every definable subset 
$U \subseteq n$, if $\{ (G^{-1}_*)^M(a) : a \in U  \}$ are pairwise $E^M_0$-inequivalent, then the set of 
$\{ F_0(x,G^{-1}(a)) = a : a \in U \} \cup \{ F_0(x,G^{-1}(b)) : b < n, b \notin U \}$ is consistent and moreover 
realized by an element of $P^M$. Note that definable means with parameters in $\ml(\tau(T))$.})

\item Finally, though we won't need to refer to the rest by name,  
for every instance of the word ``infinite'' in the above catalogue, add a new function symbol interpreted as a bijection 
between $\mathbb{N}$ and the given infinite set. In the case of the equivalence relation, it will be a parametrized family of functions. 
\end{enumerate}

\br
Now let $N$ be any model of $T$. It will suffice to prove that if $N \rstr \tau(T_1)$, or really just $N \rstr \{ Q, S \}$, is $\mu$-saturated, 
then the following are true. Since our theory simply expands that described in the proof of Lemma \ref{l:trg} (in the case, say, where 
$T_1 = \trg$) we have by the same proof that (1) and (2) where: 

\br
\noindent (1) \emph{$\mathbb{N}^{N}$ has cofinality $\geq \mu$.}  

{Note: this is the only place we use the sequence from Claim \ref{c:indep}.}

\br
\noindent (2) \emph{$N$ satisfies $<\mu$-regularity, meaning that every set of $<\mu$ elements is contained in some pseudofinite set.}

\br
\noindent (3) \emph{$N \rstr \tau(\tfeq)$ is $\mu$-saturated.} 

Suppose $P^M$ is not $\mu$-saturated. In the most interesting case, there is an omitted $1$-type $p$ of cardinality $< \mu$ of the form: 
\[ \{ ( F_0(x, b_\alpha) = b_\alpha )^{\operatorname{if } \eta(\alpha)}  : \alpha < \alpha_* <\mu \}   \]
for some $\eta \in {^{\alpha_*}2}$ and $\langle b_\alpha : \alpha < \alpha_* \rangle$ pairwise $E^N_0$-inequivalent. 
Invoking the bijection $G^N_*$ from item (f) from $Q^N$ onto $\mathbb{N}^N$, we know that by item (1), 
the image of $\{ a_\alpha : \alpha < \alpha^* \}$ is bounded in $\mathbb{N}^N$, say by $b_*$. 

As before we translate to a type in $T_1$. 
Let $a_\alpha = G^N(b_\alpha)$ for $\alpha < \alpha_*$. 
Then $\{ a_\alpha : \alpha < \alpha_* \}$ is a subset of $Q^N$, and 
$\{ S(x,a_\alpha)^{\operatorname{if} \eta(\alpha)} : \alpha < \alpha_* \}$ is consistent by our definition of $T$.  
By our assumption that $N \rstr \{ Q, S \}$ is saturated, this type is realized, say by $d$. 

And just as before, we have that $U = \{ b \in P^N_1 :  G^N_*(b) < b_*,  S^N(d, G^N(b)) \}$ is a first-order definable subset of $N$ (with parameters).  By our choice of $T$, as explained in the comment to item (f),  this is enough to show 
$p$ is realized in $P^M$, which proves (3). This 
completes the proof of the theorem.  
\end{proof}

\begin{cor}
$\tfeq$ is $\tlf^*_{\aleph_1}$-minimum among the complete countable non-simple theories. 
\end{cor}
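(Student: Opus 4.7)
The plan is to derive this corollary as an essentially immediate consequence of Theorem \ref{t:tfeq} together with the monotonicity of $\tlf^*_\kappa$ in $\kappa$. Concretely, Theorem \ref{t:tfeq} establishes that for every complete, countable, non-simple $T_1$ we have $\tfeq \tlf^*_1 T_1$. The observation recorded just before Corollary \ref{corp3} states that $T_0 \tlf^*_\kappa T_1$ implies $T_0 \tlf^*_{\kappa^\prime} T_1$ whenever $\kappa^\prime \geq \kappa$. Applying this with $\kappa = 1$ and $\kappa^\prime = \aleph_1$ to each such $T_1$ yields $\tfeq \tlf^*_{\aleph_1} T_1$, so $\tfeq$ lies below every complete countable non-simple theory in $\tlf^*_{\aleph_1}$ as well.

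To conclude that $\tfeq$ is \emph{minimum} (rather than merely a lower bound), I would finally note that $\tfeq$ itself is non-simple, so it belongs to the class under consideration, and therefore sits at the bottom of it.

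There is essentially no obstacle here: the whole content lies in Theorem \ref{t:tfeq}, and the passage from $\kappa=1$ to $\kappa=\aleph_1$ is free of charge because the witnessing $T_*$ produced in that proof, together with any of its $1$-saturated (that is, arbitrary) models, in particular covers the sub-class of $\aleph_1$-saturated models $M_* \models T_*$. The only thing to double-check when writing the proof is that the quantification pattern in Definition \ref{d:trs} genuinely gives monotonicity in $\kappa$ (it does, since restricting attention to $\aleph_1$-saturated models can only shrink the collection of $M_*$ on which we must verify the transfer of saturation, making the implication easier to satisfy), but this is precisely what was already recorded in the observation preceding Corollary \ref{corp3}.
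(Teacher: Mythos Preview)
Your proposal is correct and matches the paper's intended argument: the corollary is stated without proof immediately after Theorem \ref{t:tfeq}, so it is meant to follow by monotonicity in $\kappa$ exactly as you describe. Your added remark that $\tfeq$ is itself non-simple (hence genuinely a minimum, not just a lower bound) is a reasonable point to make explicit.
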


\section{Non-simple theories are not below simple theories} \label{s:simple}
\setcounter{theoremcounter}{0}

In this section we prove 
Theorem \ref{t:simple}, which says that non-simple theories are not below simple theories 
in the interpretability ordering $\tlf^*$. 
The precedent 
is the main theorem of \cite{MiSh:1030}, Theorem 8.2 there, 
which shows that assuming existence of a supercompact cardinal $\sigma$, 
there exist regular ultrafilters which saturate all simple theories of size $< \sigma$ 
and do not saturate any non-simple theories. This implies that under a large cardinal hypothesis,  
$\neg ( T_1 \tlf^*_{\aleph_1} T_0)$. The proof we give here is in ZFC, so is a strict improvement on this quotation. 

\begin{theorem} \label{t:simple} 
Let $T_0$ be any simple theory and $T_{1}$ any non-simple theory. Then 
\[ \neg (T_{1} \tlf^*_1 T_0). \]
\end{theorem}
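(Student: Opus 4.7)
The plan is to generalize the GEM-model strategy used in Sections~\ref{s:trg-max} and~\ref{s:tfeq} to an arbitrary pair of a simple $T_0$ and a non-simple $T_1$. Fix any countable $T_*$ interpreting both theories; by $\ref{s:sum}(2)$ we may assume $T_*$ has Skolem functions. Fix cardinals $\lambda = \lambda^{<\mu} \geq 2^\mu$ as in~\ref{h:mu}, the Ramsey class $\mk = \mk_\mu$ of~\ref{d8} with its notion of separated $I$, and the class $\up$ of templates $\Phi$ proper for $\mk$ such that $\GEM(I,\Phi) \models T_*$. The target is a template $\Psi \in \up$ and a separated $I \in \mk$ such that, writing $M = \GEM(I,\Psi)$, the reduct $M \rstr \tau(T_0)$ is $\mu$-saturated while $M \rstr \tau(T_1)$ is not $\chi^+$-saturated for some $\chi < \mu$.

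\emph{Saturation side (simplicity of $T_0$).} The goal is a direct analogue of Claim~\ref{m17} and Corollary~\ref{m20}: for every $\Phi \in \up$, separated $I \in \mk$, and complete $\tau(T_0)$-type $p$ over $\GEM(I,\Phi) \rstr \tau(T_0)$, produce $\Psi \geq_{\up} \Phi$ and a new constant $c$ realizing (the image of) $p$ in $\GEM(J,\Psi)$ for every embedding $I \hookrightarrow J$ in $\mk$. Following~\ref{m17}, I would embed $I$ into an $\aleph_0$-saturated $J \in \mk$, express each parameter of $p$ as $\sigma_\alpha(\bar{a}_{\bar{t}_\alpha})$ with $\bar{t}_\alpha \in \inc(I)$, and form the blown-up set $q(x)$ of all instances indexed by tuples $\bar{s} \in J$ realizing $\tpqf(\bar{t}_\alpha, \emptyset, I)$. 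Once $q$ is shown consistent, I would realize it by a new constant $c$ in an elementary extension, apply Corollary~\ref{d:ramsey-exp} to the Skolemized expansion, and obtain the desired $\Psi$. Iterating $\lambda$-many times as in~\ref{m20} yields a single $\Psi^*$ realizing every $\tau(T_0)$-type of size $<\mu$ over $\GEM(I,\Phi) \rstr \tau(T_0)$ for every separated $I$.

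\emph{Non-saturation side (non-simplicity of $T_1$).} Since $T_1$ is non-simple, some formula $\vp$ has either $SOP_2$ or $TP_2$. Mirroring Claim~\ref{m23} and the non-saturation half of Theorem~\ref{t:tfeq}, I would choose a separated $I \in \mk$ containing a $(\chi,\chi)$-cut (for the $TP_2$ case) or an analogous lexicographic tree configuration (for the $SOP_2$ case), and transport the forbidden pattern into $\GEM(I,\Psi^*) \rstr \tau(T_1)$. Any candidate realization would have to be of the form $\sigma(\bar{a}_{\bar{u}})$ for some finite $\bar{u}$; finiteness prevents $\bar{u}$ from meeting both sides of the cut (resp.\ both branches of the required tree), and the quantifier-free type argument of~\ref{m23}, combined with separability of $I$, contradicts realization. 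The result is a $\tau(T_1)$-type over a parameter set of size $\chi$ which is omitted in $\GEM(I,\Psi^*)$, completing the separation.

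The principal obstacle is the consistency of the blown-up type $q$ in the saturation step. In Claim~\ref{m17} this amounted to a neat combinatorial reduction of collisions in $q$ to collisions in $p$, exploiting quantifier elimination and the essentially trivial structure of types in $\trg$. For a general simple $T_0$ no such elementary reason is available, and one genuinely needs an invariance-plus-amalgamation input from simplicity theory. The natural candidate is the independence theorem of Kim--Pillay together with the fact that any complete type over a model in a simple theory has a unique non-forking extension invariant under the automorphism group fixing the model; applied to the orbits of $\operatorname{Aut}(J/I)$ on the indices of $q$, this should certify $q$ consistent. Carrying this out inside the GEM-model framework, without detouring through regular ultrapowers, is precisely what allows the proof to avoid the supercompact cardinal used in the quotation from \cite{MiSh:1030}, giving the promised ZFC improvement.
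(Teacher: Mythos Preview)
Your proposal has a genuine structural gap: you are trying to run both halves of the argument with the colored-linear-order class $\mk_\mu$ from \S\ref{s:trg-max}, but neither half goes through with that index class for general $T_0$, $T_1$.

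On the saturation side, the consistency of the blown-up type $q$ is the whole difficulty, and your gesture toward ``orbits of $\operatorname{Aut}(J/I)$'' does not supply what the independence theorem actually needs. The independence theorem amalgamates nonforking extensions of a \emph{common} type over a \emph{model}, where the parameter sets are mutually independent over that model. In a separated colored linear order there is no natural small submodel $A$ such that (i) $p$ does not fork over $A$ and (ii) the many copies of $\bar{t}_\alpha$ in $J$ are mutually independent over $A$. The paper solves this by changing the index class to trees $\mk^{tr}_\kappa$ with $\kappa \geq \kappa(T_0)$: the type $p$ does not fork over some level $M_{i_*}$ (using $\kappa(T_0)$), and the branches above level $i_*$ are shown independent over the initial segment via a preliminary finite-satisfiability lemma (Claim~\ref{n11}, Corollary~\ref{n12}). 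This tree geometry is what makes the inductive application of the independence theorem in Subclaim~\ref{sc2b} go through; nothing analogous is available for $\mk_\mu$.

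On the non-saturation side, a $(\chi,\chi)$-cut in a linear order produces an omitted type for $<_{\operatorname{dlo}}$, not for a $TP_2$ formula; Claim~\ref{m23} works because the template was required to \emph{represent the order}. There is no corresponding representation of a $TP_2$ array in a colored linear order skeleton, and Theorem~\ref{t:tfeq} has no ``non-saturation half'' to mirror. The paper again uses the tree index model: with $I = {^{\kappa>}\{0\}}$ a single branch, the omitted type (Claim~\ref{n17}) asserts $F_*(a_{\eta_{2i}}, x) = a_{\eta_{2i+1}}$ along the branch, and any putative realization $\sigma(\bar{a}_{\bar{u}})$ with finite $\bar{u}$ is contradicted by passing to a larger tree $J$ and replacing a late $\eta_{2i+3}$ by an incomparable node of the same quantifier-free type. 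The tree branching is essential here.

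In short, the missing idea is the switch from $\mk_\mu$ to $\mk^{tr}_\kappa$; the tree structure is what simultaneously enables the independence-theorem amalgamation on the simple side and the $TP_2$ omitting-types argument on the non-simple side.
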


We will prove the theorem at the end of the section, after several lemmas. 

\begin{obs}
Since any theory which is not simple has either $TP_2$ or $SOP_2$, and any theory with $SOP_2$ is $\tlf^*$ maximal by 
$\ref{maxl-fact}$, it will suffice to prove the theorem in the case where $T_1$ has $TP_2$.  In fact, by Theorem \ref{t:tfeq} above, 
it would suffice to prove the theorem in the case where $T_1 = \tfeq$. 
\end{obs}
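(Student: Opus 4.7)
The observation records two reductions, both of which follow from transitivity of $\tlf^*_1$ together with prior results in the paper. The plan is first to note that transitivity holds, and then to carry out each reduction by a two-line argument exploiting it.

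\emph{Transitivity of $\tlf^*_1$.} If $T^1_*$ witnesses $T_a \tlf^*_1 T_b$ (via interpretations $\bar{\vp}_a, \bar{\vp}_b$ in disjoint signatures) and $T^2_*$ witnesses $T_b \tlf^*_1 T_c$ (via $\bar{\vp}^\prime_b, \bar{\vp}^\prime_c$ in disjoint signatures), then a joint expansion $T^3_*$ in the disjoint union of the two signatures, enriched with axioms forcing the $\bar{\vp}_b$-interpretation and the $\bar{\vp}^\prime_b$-interpretation to be isomorphic $\tau(T_b)$-structures (via an added definable bijection), witnesses $T_a \tlf^*_1 T_c$: in any model of $T^3_*$, $\lambda$-saturation of the $T_c$-reduct propagates through the shared $T_b$-reduct to the $T_a$-reduct.

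\emph{First reduction (to $TP_2$).} By Shelah's classification, a complete theory is non-simple iff it has the tree property, which splits into $TP_1$ and $TP_2$; and $TP_1$ is equivalent to $SOP_2$. So every non-simple $T_1$ has $TP_2$ or $SOP_2$. Suppose the theorem has been verified whenever $T_1$ has $TP_2$, and let $T_1$ have $SOP_2$. If $T_1 \tlf^*_1 T_0$ for some simple $T_0$, fix any $TP_2$ theory, e.g.\ $T^\prime_1 = \tfeq$. By Fact \ref{maxl-fact}, $T_1$ is $\tlf^*_1$-maximal, so $T^\prime_1 \tlf^*_1 T_1$; by transitivity $T^\prime_1 \tlf^*_1 T_0$, contradicting the $TP_2$ case.

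\emph{Second reduction (to $T_1 = \tfeq$).} Assume the theorem has been shown in the case $T_1 = \tfeq$, and let $T_1$ be any non-simple countable theory with $T_1 \tlf^*_1 T_0$ for some simple $T_0$. By Theorem \ref{t:tfeq}, $\tfeq \tlf^*_1 T_1$, so transitivity gives $\tfeq \tlf^*_1 T_0$, a contradiction. The only substantive ingredient is the transitivity statement above; the rest is bookkeeping against the classification and Fact \ref{maxl-fact} / Theorem \ref{t:tfeq}. The main technical nuisance is checking that the identification of the two $T_b$-interpretations can be added to $T^3_*$ without destroying the saturation-transfer property, but this is straightforward since the added axioms are of the form of a definable bijection, so no new types are introduced that could block saturation.
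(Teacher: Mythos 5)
Your two reductions are exactly the ones the paper intends: the Observation rests on the classification of non-simple theories into $TP_2$ or $SOP_2$, on Fact \ref{maxl-fact} and Theorem \ref{t:tfeq}, and on the (left implicit) fact that $\tlf^*_1$ is transitive; you have correctly isolated transitivity as the hidden ingredient, and the transfer half of your transitivity argument is fine, since $\lambda$-saturation of the $T_c$-interpretation passes through the definable isomorphism to the $T_b$-interpretation attached to $T^1_*$ and thence to the $T_a$-interpretation.

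The soft spot is the last claim, that adding the identification axioms is ``straightforward since \dots no new types are introduced that could block saturation.'' What actually needs an argument is not saturation but \emph{consistency} of $T^3_*$: you must exhibit one structure in which a model $M_1 \models T^1_*$ and a model $M_2 \models T^2_*$ sit side by side (this already requires the usual relativization device, predicates $P_1, P_2$ with the axioms of each $T^i_*$ relativized, since the two theories live on different universes) and in which the two interpreted $T_b$-structures are isomorphic. A priori $M_1^{[\bar{\vp}_b]}$ and $M_2^{[\bar{\vp}^\prime_b]}$ are only elementarily equivalent (both model the complete theory $T_b$); they need not be isomorphic, and the class of $T_b$-structures arising as interpretations in models of $T^2_*$ is only pseudo-elementary, so you cannot simply choose $M_2$ to match $M_1$. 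The standard repair is to replace $M_1, M_2$ by ultrapowers modulo an ultrafilter provided by the Keisler--Shelah theorem, noting that interpretation commutes with ultrapowers, or to pass to special (or suitably saturated) elementary extensions of matched cardinality; then take $T^3_*$ to be the complete theory of the resulting structure, observe that in every model of $T^3_*$ the $P_1$-part models $T^1_*$, the $P_2$-part models $T^2_*$, and the definable map is an isomorphism of the $T_b$-parts, and run your transfer argument. Alternatively you can avoid reproving this by quoting that $\tlf^*_\kappa$ is a quasi-order from \cite{Sh:500} or \cite{DzSh:692}. With either fix, your write-up matches the paper's intended justification of the Observation.
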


\begin{hyp} \label{n2}  \emph{ In this section, }
\begin{enumerate}
\item $T_0$ is a simple theory. $($We can use many such theories simultaneously.$)$
\item $T_1$ is a non-simple theory with $TP_2$. 
\item $\kappa = \cf(\kappa) \geq \kappa(T_0)$.
\item $\kappa \leq \mu$, $\lambda = \lambda^{<\mu}$. 

\item $T_*$ is a theory which interprets both $T_0$ and $T_1$, i.e. a potential candidate for showing 
$T_0 \tlf^* T_1$.  We assume $(\ref{s:sum}$ above$)$ $T_*$ has Skolem functions. 

\item $F_*$ is a binary function symbol of $\tau(T_*) \setminus \tau(T_0) \setminus \tau(T_{1})$ and there is an identification between 
some formula of $T_1$ with $TP_2$ and the graph of $F$ in the sense that:
\begin{enumerate}
\item  for any $M \models T_0$, $M \models$ ``$F_*$ is a $2$-place function such that any finite function is represented by some $F(-,a)$.''  
\item  if $M \rstr \{ F_* \}$ omits a type, then $M \rstr \tau(T_1)$ omits one of the same size. 
\end{enumerate}

\item $\mk = \mk^{tr}_\kappa$, the class of normal trees with $\kappa$ levels, lexicographic order, tree order and 
predicates for levels, see $\ref{n5}$ above. 

\item $\Upsilon$ is the class of templates proper for $\mk$ which satisfy our global hypotheses $\ref{c:nice}$ and also satisfy: 
for every $I \in \mk$ and $\Phi \in \Upsilon$, $\GEM(I, \Phi) \models T_*$. 

\item $\leq = \leq_\Upsilon$ is the natural order on this class, as in \ref{d:order}. 
\end{enumerate}
\end{hyp}

\begin{obs} \label{o:inc-seq}
$\Upsilon$ is closed under unions of increasing sequences of length $\leq \lambda$ $($and more but this is 
all we need here$)$. 
\end{obs}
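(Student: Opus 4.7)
Let $\langle \Phi_\alpha : \alpha < \delta \rangle$ be a $\leq_\Upsilon$-increasing sequence of templates in $\Upsilon$ with $\delta \leq \lambda$. The plan is to define the union $\Phi$ in the obvious way, verify it is a template, verify it is proper for $\mk$, and finally verify the two extra conditions (niceness and $\GEM(I,\Phi) \models T_*$).

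First set $\tau(\Phi) = \bigcup_{\alpha < \delta} \tau(\Phi_\alpha)$; this is a vocabulary. For each quantifier-free type $p$ of a finite tuple in some $I \in \mk$, define
\[
\Phi(p) \;=\; \bigcup_{\alpha < \delta} \Phi_\alpha(p),
\]
which is coherent because $\Phi_\alpha \leq_\Upsilon \Phi_\beta$ for $\alpha < \beta$ forces $\Phi_\alpha(p) \subseteq \Phi_\beta(p)$ (recalling Definition \ref{d:order}, where the skeletons live in an increasing chain of models with $\GEM_{\tau(\Phi_\alpha)}(I,\Phi_\alpha) \preceq \GEM_{\tau(\Phi_\alpha)}(I,\Phi_\beta)$).

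Next I would check properness: for each $I \in \mk$, the family of structures $\langle \GEM(I,\Phi_\alpha) : \alpha < \delta \rangle$ forms an increasing chain of $\tau(\Phi_\alpha)$-structures whose $\tau(\Phi_\alpha)$-reducts are elementary chains for each fixed $\alpha$. Let
\[
N \;=\; \bigcup_{\alpha < \delta} \GEM(I, \Phi_\alpha),
\]
a $\tau(\Phi)$-structure generated by the common skeleton $\ma = \{ \bar{a}_t : t \in I \}$. Quantifier-free indiscernibility of $\ma$ in $N$ is inherited levelwise from each $\GEM(I,\Phi_\alpha)$, and the assignment $\bar{t} \mapsto \tpqf(\bar{a}_{\bar t}, \emptyset, N)$ agrees with $\Phi$, so $N = \GEM(I, \Phi)$ witnesses that $\Phi$ is proper for $\mk$.

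It remains to verify the extra conditions. For niceness (Convention \ref{c:nice}): each $T_{\Phi_\alpha}$ has Skolem functions, and every formula of $\tau(\Phi)$ uses only finitely many symbols, hence lies in some $\tau(\Phi_\alpha)$, so Skolem functions for it already exist in $\tau(\Phi_\alpha) \subseteq \tau(\Phi)$. For the clause $\GEM(I, \Phi) \models T_*$: since $\tau(T_*) \subseteq \tau(\Phi_0)$ and $\GEM(I,\Phi_\alpha) \models T_*$ for each $\alpha$, the chain is elementary in the $\tau(\Phi_\alpha)$-reduct and a fortiori in the $\tau(T_*)$-reduct; by the Tarski-Vaught elementary chain theorem, $\GEM(I, \Phi) \rstr \tau(T_*)$ is an elementary extension of each $\GEM(I,\Phi_\alpha) \rstr \tau(T_*)$, so models $T_*$. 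The main point requiring care is simply the bookkeeping in the previous paragraph that the union of an increasing chain of $\GEM$ models for an increasing chain of (proper, nice) templates is again a $\GEM$ model for the union template; the length bound $\delta \leq \lambda$ plays no essential role beyond keeping $|\tau(\Phi)| \leq \lambda$ for later applications.
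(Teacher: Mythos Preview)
Your proof sketch is correct and in fact supplies more than the paper does: in the paper this statement is recorded as an observation with no proof at all, the routine verification being left to the reader. Your outline---taking the union of vocabularies and of the template values, realizing $\GEM(I,\Phi)$ as the union of the elementary chain $\langle \GEM(I,\Phi_\alpha) : \alpha < \delta\rangle$, and checking that Skolem functions and the sentence $T_*$ survive via the elementary chain theorem---is exactly the intended argument, and you have correctly noted that the bound $\delta \leq \lambda$ is used only to control $|\tau(\Phi)|$ for later counting.
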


A comment on strategy. 
First, in \ref{n11}, we'll show that we may increase the given template $\Phi$ to $\Psi$ to ensure types have a finite 
satisfiability property. Note that Claim \ref{n11}(2) tells us types in $\GEM(..., \Phi)$ are finitely satisfiable 
in $\GEM(...,\Psi)$; a simple induction in Claim \ref{n12} is needed to use the 
same template in both halves of the statement. The saturation argument, Lemma \ref{n14}, 
depends on showing that if we take a type over (the Skolem hull of 
those parts of the skeleton whose indices lie in) a single branch and look at many copies of such a type, 
their union is consistent. In that proof the independence theorem plays a key role. Its use will be 
justified by finite satisfiability, from \ref{n11} and \ref{n12}.  The non-saturation argument is Claim \ref{n17}.

\begin{claim} \label{n11}
For every $\Phi \in \up$, there is $\Psi \in \up$ such that:
\begin{enumerate}
\item $\Phi \leq \Psi$
\item for every standard $I \in K$ and $\eta \in I$ of level $i < \kappa$, every type 
 of $\tau(T)$ which 
$\GEM(I^{\geq \eta} \cup I^{\leq \eta}, \Phi)$ 
realizes over 
$\GEM(I^{\perp \eta} \cup I^{\leq \eta}, \Phi)$ 
inside 
$\GEM(I, \Phi)$ is finitely satisfiable in $\GEM(I^{\leq \eta}, \Psi)$ where:
\\ $I^{\perp \eta} = \{ \eta \in I :  \neg(\eta \tlf \nu) \}$, 
\\ $I^{\geq \eta} = \{ \nu \in I : \eta \tlf \nu \}$, 
\\ $I^{\leq \eta} = \{ \nu \in I : \nu \tlf \eta \}$. 
\end{enumerate} 
\end{claim}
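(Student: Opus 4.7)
The plan is to construct $\Psi$ by iteratively enlarging $\Phi$ via the Ramsey property (Corollary \ref{d:ramsey-exp}), along the lines of Corollary \ref{m20}, so that witnesses to existential formulas with parameters from $\GEM(I^{\perp\eta}\cup I^{\leq\eta}, \Phi)$ are absorbed into the template in a form expressible by branch-indexed terms.

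I fix an $\aleph_0$-saturated $J^* \in \mk = \mk^{tr}_\kappa$ and let $M^* = \GEM(J^*, \Phi)$ with skeleton $\ma^* = \langle \bar{a}_\nu : \nu \in J^* \rangle$. For each formula $\varphi(x, \bar{y})$ of $\tau(T_*)$ and each quantifier-free type $\pi$ in $J^*$ of a configuration $(\eta, \bar{\mu})$ with $\eta$ at level $<\kappa$ and $\bar{\mu}$ from $J^{*,\perp\eta} \cup J^{*,\leq\eta}$, such that $M^* \models \exists x\, \varphi(x, \bar{a}_{\bar{\mu}})$, I proceed in the spirit of Claim \ref{m17}: name a fresh symbol $c_{\varphi,\pi}$, consider the ``many-copies'' type $\{\varphi(c_{\varphi,\pi}, \bar{a}_{\bar{\mu}'}) : \bar{\mu}' \text{ realizes } \pi \text{ in } J^*\}$, and prove its consistency by a collision-avoidance argument using normality of the tree and $\aleph_0$-saturation of $J^*$. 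Let $M^{*+}$ denote the expansion of $M^*$ encoding these assignments; applying Corollary \ref{d:ramsey-exp} to $(J^*, \Phi, M^*, M^{*+})$ yields a template $\Psi^{(1)} \geq \Phi$ in $\Upsilon$ reflecting $\ma^*$. Because $\Psi^{(1)}$ introduces new formulas and hence new patterns to handle, iterate continuously through $\lambda$ stages --- using $\lambda = \lambda^{<\mu}$ and $|\tau(\Phi_\alpha)| \leq \lambda$ to bound the enumeration --- and set $\Psi = \bigcup_{\alpha < \lambda} \Phi_\alpha \in \Upsilon$ by Observation \ref{o:inc-seq}.

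To verify (2): given standard $I$, $\eta$ of level $<\kappa$, and a type $p$ over $A = \GEM(I^{\perp\eta}\cup I^{\leq\eta}, \Phi)$ realized in $B = \GEM(I^{\geq\eta}\cup I^{\leq\eta}, \Phi)$, combine any finite fragment of $p$ into a single $\psi(x, \bar{b})$ with $\bar{b}$ indexed in $I^{\perp\eta} \cup I^{\leq\eta}$. Standardness of $I$ and normality of $\mk^{tr}_\kappa$ match the qf-type of the index tuple to some pattern $\pi$ realized in $J^*$; the symbol $c_{\psi,\pi}$ added at the appropriate stage of the construction, interpreted in $\GEM(I, \Psi)$ via the reflection property of Ramsey, gives a branch-indexed element satisfying $\psi(x, \bar{b})$ and thus witnessing the fragment inside $\GEM(I^{\leq\eta}, \Psi)$.

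The main obstacle is the consistency of the many-copies type --- showing that the single fresh symbol can simultaneously witness $\varphi(x, \bar{a}_{\bar{\mu}'})$ for all $\bar{\mu}'$ of pattern $\pi$ in $J^*$. Following Claim \ref{m17}'s argument, any apparent conflict between $\varphi(c_{\varphi,\pi}, \bar{a}_{\bar{\mu}'})$ and $\neg \varphi(c_{\varphi,\pi}, \bar{a}_{\bar{\mu}''})$ with $\bar{\mu}', \bar{\mu}''$ of matching pattern is reduced, using the $\aleph_0$-saturation and normality of $J^*$ to interpolate intermediate tuples of matching qf-type, to a contradiction inside the original (consistent) existential. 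This collision-avoidance is the technical heart of the construction; once it is in hand, Ramsey reflection delivers the template extension and the $\lambda$-length iteration covers all formulas arising in successive expansions.
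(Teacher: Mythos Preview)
Your approach has a genuine gap: the ``many-copies'' consistency step does not go through for arbitrary formulas.  In Claim~\ref{m17} the type $q$ consists of edge and non-edge assertions in the random graph, so inconsistency can only arise from an equality collision between parameters, and the collision-avoidance argument exploits exactly this.  Here you are asking that a single constant $c_{\varphi,\pi}$ satisfy $\varphi(c_{\varphi,\pi},\bar a_{\bar\mu'})$ simultaneously for \emph{all} $\bar\mu'$ realizing $\pi$ in an $\aleph_0$-saturated tree.  For a general $\tau(T_*)$-formula there is no reason this should be consistent, and there are easy obstructions: take any $\varphi(x,y_1,y_2)$ for which $\varphi(x,a,b)\wedge\varphi(x,a',b')$ forces $a=a'$, and a pattern $\pi$ admitting realizations with distinct first coordinates.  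Nothing about normality or saturation of $J^*$ rescues this; the analogy with Claim~\ref{m17} simply does not carry.  Your verification paragraph also calls $c_{\psi,\pi}$ a ``branch-indexed element,'' but a constant symbol is interpreted identically in every $\GEM$-model and is not indexed by $\eta$ at all.

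The paper's proof avoids this entirely by a different mechanism.  Rather than naming constants, it enlarges the index model: set $I_1={}^{\kappa>}(J_0\times\mathbb{Q})$ and $I_0={}^{\kappa>}(J_0\times\{0\})$, and introduce \emph{function} symbols $F_{i+1,j,\nu}$ (for $\nu\in{}^{[i+1,j]}(\mathbb{Q}\setminus\{0\})$) with $F_{i+1,j,\nu}(\bar a_\eta)=\bar a_\rho$, where $\rho$ extends $\eta$ by the $\mathbb{Q}$-coordinates prescribed by $\nu$.  Because quantifier-free type in the tree depends only on level, tree-order, and lex-order, any witness $\bar a_{\eta^*}$ with $\eta\trianglelefteq\eta^*$ can be replaced by some $\bar a_\rho=F_{i+1,j,\nu}(\bar a_\eta)$ having the same qf-type over the perpendicular-and-below parameters; this is Subclaim~\ref{sc}.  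The functions are defined concretely in the actual model $M_1^+$, so there is no consistency problem to solve, and a single application of the Ramsey property (no $\lambda$-length iteration) yields $\Psi$.  The point you are missing is that the witness must depend on $\eta$, and the right way to encode this at the template level is a function of $\bar a_\eta$, not a constant.
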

\setcounter{equation}{0}

\begin{proof}  
To begin, let's carefully choose $I_0, I_1 \in \mk$. 
Towards this, fix $J_0$ to be any infinite $\aleph_0$-saturated linear order.  
Let $J_1$ be the linear order given by $J_0 \times \mathbb{Q}$, with the usual (lexicographic) order.

Let $I_1 \in \mk$ be the index model whose domain is 
${^{\kappa>}J_1}$. Then $I_1$ is a tree of sequences [of pairs, though we can't refer to the pairing 
in $\tau(\mk)$] with predicates $P_i$ naming level $i$ for $i < \kappa$, the tree order $\tlf$, and the lexicographic 
order $\lex$, i.e. lexicographic order on the tree.  Let $I_0 = {^{\kappa >}(J_0 \times \{ 0 \} )} \subseteq I_1$, the sequences of pairs with 
second coordinate constantly $0$.  Let $M_1 = \GEM(I_1, \Phi)$ with skeleton $\ma = \{ \bar{a}_\eta : \eta \in I_1 \}$ and let 
$M_0 = \GEM(I_0, \Phi)$ with skeleton $\ma \rstr I_0$. 

This construction accomplishes: 
 
\br
\begin{enumerate}
\item[(a)] $I_0$ is $\aleph_0$-saturated for $\mk$ (so later we may apply \ref{d:ramsey-exp}). 
\item[(b)] $M_0 = \GEM(I_0, \Phi) \preceq M_1 = \GEM(I_1, \Phi)$, immediate by $I_0 \subseteq I_1$, see \ref{c:nice}. 
\item[(c)] $M_1$ acts like a larger saturated model around $M_0$ in a sense we now explain. 
\end{enumerate}

\br
\noindent  
Working in $M_1$, let's ``pad'' $M_0$ by building in witnesses to finite satisfiability, as follows.
Define a new set of function symbols 
\begin{equation} 
\mcf = \{ F_{i+1, j, \nu} : i + 1< j < \kappa, \nu \in {^{[i+1, j]}(\mathbb{Q} \setminus \{ 0 \})} \}. 
\end{equation} 
Let $M^+_1$ be $M_1$ expanded to a model of $\tau(\Phi) \cup \mcf$ in the 
following way.\footnote{Informally, for every $\eta \in I_0$ of successor length $i+1$, and every 
given sequence $\nu$ of $j$ additional non-zero rationals, the function $F_{i+1,j,\nu}$ sends $\bar{a}_{\eta}$ 
to $\bar{a}_{\rho}$ where $\rho$ is obtained 
by concatenating onto $\eta$ a sequence of $j$ additional elements whose first coordinate just repeats the last first coordinate 
of $\eta$ and whose second coordinates are those given by $\nu$. The reason to use $i+1$ is to have a last first coordinate to repeat.}  
For every 
$i + 1< j < \kappa$, and for every $\nu \in {^{[i+1, j]}(\mathbb{Q} \setminus \{ 0 \})}$, 
expand $M_1$ by defining $F_{i+1, j, \nu}$ to be the function 
with domain $\{ \bar{a}_{\eta} : P_{i+1}(\eta) \} = \{ \bar{a}_\eta : \eta \in I_0, \ell(\eta) = i+1 \}$ such that  
$F_{i+1,j,\nu}(\bar{a}_\eta) = \bar{a}_\rho$ when $\eta \triangleleft \rho \in {^{j}(J_1)}$ and 
$(\forall j)(i+1\leq i^\prime < j \implies \rho(i^\prime) = (t, \nu(i^\prime))$, where $t$ is such that 
$\eta(i) = (t, 0)$.  

Let $M^\star$ be the submodel of $M^+_1$ generated by $\{ \bar{a}_\eta : \eta \in I_0 \}$.  
We now argue that $M^\star$ has the following key property.  

\begin{subclaim} \label{sc}
For every quantifier free formula $\vp(\bar{x}_0, \dots, \bar{x}_{k-1}, \bar{y}_0, \dots, \bar{y}_{m-1})$ 
of $\tau(\Phi)$, 
every $\eta \in I_0$,  
every $\eta_0, \dots, \eta_{m-1} \in {I_0}^{\perp \eta } \cup {I_0}^{\leq \eta}$ 
and every 
$\eta^*_0, \dots, \eta^*_{k-1} \in {I_0}^{\geq \eta } \cup {I_0}^{\leq \eta}$  
there exist 
function symbols $F_0, \dots, F_{k-1} \in \mcf$ such that  
\[ \mbox{ if } M^\star \models \vp[\bar{a}_{\eta^*_0}, \dots, \bar{a}_{\eta^*_{k-1}}, \bar{a}_{\eta_0}, \dots, \bar{a}_{\eta_{m-1}} ] \]
\[ \mbox{ then } M^\star \models \vp[F_0(\bar{a}_{\eta}), \dots, F_{k-1}(\bar{a}_{\eta}), 
\bar{a}_{\eta_0}, \dots, \bar{a}_{\eta_{m-1}}].\] 
Moreover, the choice of functions is an invariant of the set of 
types 
\[ \{\tpqf(\langle \eta, \eta_0, \dots, \eta_{m-1}\rangle, \emptyset, I_0), \tpqf(\langle \eta, \eta_0, \dots, \eta_{m-1}, \eta^*_0, \dots, \eta^*_{k-1}\rangle, \emptyset, I_1) \}. \]
\end{subclaim}

\begin{proof}[Proof of Subclaim \ref{sc}.] 
We unwind the definitions. As $M^\star$ is a submodel of $M^+_1$ and $\vp$ is quantifier free in $\tau(\Phi)$, 
\begin{align*} 
 M^\star \models \vp[\bar{a}_{\eta^*_0}, \dots, & \bar{a}_{\eta^*_{k-1}}, \bar{a}_{\eta_0}, \dots, \bar{a}_{\eta_{m-1}} ] \\
\iff & 
 M^+_1 \models \vp[\bar{a}_{\eta^*_0}, \dots, \bar{a}_{\eta^*_{k-1}}, \bar{a}_{\eta_0}, \dots, \bar{a}_{\eta_{m-1}} ].  
\end{align*}
As for the elements in the index model, quantifier free type depends only on level, tree-order, and lexicographic order, 
for each $\ell < k$ we may find $i_\ell, j_\ell, \nu_\ell, \rho_\ell$ such that 
first, $i_\ell + 1< j_\ell < \kappa$ and $\nu_\ell \in {^{[i_\ell+1, j_\ell]}(\mathbb{Q} \setminus \{ 0 \})}$, 
second, $F^{M^+_1}_{i_\ell+1,j_\ell,\nu_\ell}(\bar{a}_\eta) = \bar{a}_\rho$ for $\ell < k$, 
and third,\footnote{Note that in (\ref{eq:types}) the $\eta_i$'s are elements of $I_0 \subseteq I_1$ 
while the $\rho_\ell$'s are just elements of $I_1$, 
the index model for $M_1$.  Elements of the form $\bar{a}_{\rho_\ell}$ belong to the skeleton of $M_1$, 
and a fortiori to the expanded model $M^+_1$.  These elements \emph{also} belong to the smaller model 
$M^+$ by virtue of being equal to $F^{M^+_1}_{i_\ell+1,j_\ell,\nu_\ell}(\bar{a}_\eta)$. However, 
it would be misleading to say ``$\bar{a}_{\rho_\ell} \in M^+$'' because the notation would suggest it is an 
element of the skeleton, which it is not since $\rho_\ell \notin I_0$.}
\begin{align}
\label{eq:types}
\begin{split} 
\qftp(\eta, \eta_0, \dots, \eta_{m-1}, \eta^*_0, \dots, \eta^*_{k-1}, \emptyset, I_1) =  \\ 
\qftp(\eta, \eta_0, \dots, \eta_{m-1}, \rho_0, \dots, \rho_{k-1}, \emptyset, I_1).  
\end{split}
\end{align}
Now by definition of $\GEM$ model, since the skeleton is quantifier-free indiscernibe, 
\begin{align*} 
 M^+_1 \models \vp[\bar{a}_{\eta^*_0}, & \dots, \bar{a}_{\eta^*_{k-1}}, \bar{a}_{\eta_0}, \dots, \bar{a}_{\eta_{m-1}} ] \\ 
\iff & 
M^+_1 \models \vp[\bar{a}_{\rho_0}, \dots, \bar{a}_{\rho_{k-1}}, \bar{a}_{\eta_0}, \dots, \bar{a}_{\eta_{m-1}} ]. 
\end{align*}
By our choice of $\rho_\ell$, the last equation above holds if and only if 
\begin{equation} 
\label{eq:8}
M^+_1 \models \vp[F_{i_0+1,j_0, \nu_0} (\bar{a}_{\eta}), \dots, F_{i_{k-1}+1, j_{k-1}, \nu_{k-1}}(\bar{a}_{\eta}), 
\bar{a}_{\eta_0}, \dots, \bar{a}_{\eta_{m-1}}]. 
\end{equation}
so recalling the definition of $M^\star$ and the fact that $\vp$ is quantifier free, (\ref{eq:8}) holds if and only if 
\begin{equation} 
\label{eq:9}
M^\star \models \vp[F_{i_0+1,j_0, \nu_0} (\bar{a}_{\eta}), \dots, F_{i_{k-1}+1, j_{k-1}, \nu_{k-1}}(\bar{a}_{\eta}), 
\bar{a}_{\eta_0}, \dots, \bar{a}_{\eta_{m-1}}] 
\end{equation}
which proves the subclaim. 
\noindent\emph{Proof of Subclaim \ref{sc}.}
\end{proof}

Before continuing, we record the following immediate corollary to the proof of Subclaim \ref{sc}. 
We'll use $x$'s and $y$'s for 
arbitrary elements of $\tau(T)$-models and $s$'s and $t$'s and $v$'s for arbitrary elements of index models. 

\begin{subclaim} \label{sc2}
Let $\vp(\bar{x}_0, \dots, \bar{x}_{k-1}, \bar{y}_0, \dots, \bar{y}_{m-1})$ be a quantifier free formula 
of $\tau(\Phi)$. Suppose $\xr(t,t_0, \dots, t_{k-1}, s_0,\dots, s_{m-1}) \in D_{\operatorname{qf}}(I_0)$ is a type which 
satisfies    
${\xr \vdash \mbox{`} t \triangleleft t_\ell ~\lor~ t_\ell \trianglelefteq t}$'' for each $\ell < k$ and 
$\xr \vdash$ ``$s_i \perp t ~ \lor ~ s_i \tlf t$'' for each ${i < m}$.  
Then there exist functions 
$F_0, \dots, F_{k-1} \in \mcf$ such that \\ the formula $\psi = \psi_\xr(x,x_0,\dots,x_{k-1},y_0,\dots,y_{m-1})$ given by 
\[ \vp(x_0, \dots, x_{k-1}, y_0, \dots, y_{m-1}) \implies 
\vp(F_0(x), \dots, F_{k-1}(x), y_0, \dots, y_{m-1}) \]
belongs to $\tpqf(\bar{a}_{\bar{v}}, \emptyset, M^+_1)$ for any $\bar{v}$ from $I_0$ realizing $\xr$. 
\end{subclaim}

We are ready to find $\Psi$. 
Expand $M^\star$ 
to a model $M^{\star\star}$ whose theory has Skolem functions. 
By the Ramsey property \ref{d:ramsey-exp} applied with $I_0$, $M_0$ and $\ma \rstr I_0$, and $M^{\star\star}$ here for 
$J$, $M$ and $\ma$, and $N^+$ there, there exists a template $\Psi \geq \Phi$ which is proper for $I_0$ and which has the property that 
for each $\xr$ satisfying the hypothesis of Subclaim \ref{sc2}, the formula $\psi_\xr$ from that Subclaim 
belongs to $\Psi(\xr)$. 

Let us verify that $\Psi$ satisfies the property of the claim. 
Let $I \in \mk$ be any standard index model. 
Let $N = \GEM(I, \Psi)$.  
Let a quantifier-free formula $\theta(\bar{x}, \bar{y})$ of $\ml(\tau(T))$ be given; this will suffice for the claim  
as $T_\Phi$ has Skolem functions.  Note that by definition of $\leq_\Upsilon$, $\GEM_{\tau(T)}(I, \Phi) \preceq \GEM_{\tau(T)}(I, \Psi)$. 
Suppose $N \models \theta[\bar{b}, \bar{c}]$ where for some $\eta \in I$, 
$\bar{b}$  
is a finite sequence of elements of $\GEM(I^{\geq \eta} \cup I^{\leq \eta}, \Phi)$ 
and $\bar{c}$ 
is a finite sequence of elements of $\GEM(I^{\perp \eta} \cup I^{\leq \eta}, \Phi)$.
We would like to find $\bar{b}^\prime$ from $\GEM(I^{\leq \eta}, \Psi)$ such that $N \models \theta[\bar{b}^\prime, \bar{c}]$. 
By definition of $\GEM$-model, 
there are elements $\eta^*_0, \dots, \eta^*_{k-1} \in I^{\geq \eta} \cup I^{\leq \eta}$ and $\tau(\Phi)$-terms  
$\sigma^*_0, \dots, \sigma^*_{\ell-1}$ such that 
\[ \langle \sigma^*_0(\bar{a}_{\eta^*_0}, \dots, \bar{a}_{\eta^*_{m-1}}), \dots, 
 \sigma^*_{j-1}(\bar{a}_{\eta^*_0}, \dots, \bar{a}_{\eta^*_{m-1}})\rangle = \bar{b} \] 
and also elements $\eta_0, \dots, \eta_{m-1} 
\in I^{\perp \eta} \cup I^{\leq \eta}$ and $\tau(\Phi)$-terms $\sigma_0, \dots \sigma_{j-1}$ such that 
\[ \langle {\sigma}_0(\bar{a}_{\eta_0}, \dots, \bar{a}_{\eta_{k-1}}), \dots, 
{\sigma}_{\ell-1}(\bar{a}_{\eta_0}, \dots, \bar{a}_{\eta_{k-1}})\rangle = \bar{c}. \] 
Let $\vp(x_0, \dots, x_{k-1}, y_0, \dots, y_{m-1})$ be the quantifier-free formula 
equivalent to 
\\ $\theta(~\sigma^*_0(x_0, \dots, x_{k-1})$, $\dots$, 
$\sigma^*_{\ell-1}(x_0$, $\dots$, $x_{k-1})$, $\sigma_0(x_0$, $\dots$, $x_{m-1})$, $\dots$, 
$ \sigma_{j-1}(x_0$, $\dots$, $x_{m-1})$ $~)$. By construction it is still a $\tau(\Phi)$-formula. 
Let 
\[ \mathfrak{r} = \tpqf(\eta~^\smallfrown \eta_0~^\smallfrown\cdots ^\smallfrown \eta_m ~^\smallfrown \eta^*_0~^\smallfrown\cdots ^\smallfrown \eta^*_{k-1}, \emptyset, I). \]
Recall $I_0$ and $M^\star$ from earlier in the proof. 
Because $I_0$ was $\aleph_0$-saturated, there is some sequence $\bar{\eta}$ of elements of $I_0$ realizing $\mathfrak{r}$. Because $\Psi \geq \Phi$, $M_1 = \GEM(I_1, \Phi) \preceq \GEM(I_1, \Psi)$ and recall that $M^\star$ is a submodel of 
$M^+_1$, so a fortiori $M^\star \rstr_{\tau(\Phi)} \subseteq M_1$. 
As $\vp$ is a quantifier-free $\tau(\Phi)$-formula, it must be that $M^\star \models \vp[\bar{a}_{\bar{\eta}}]$. 
Apply Subclaim \ref{sc2} to finish the proof. (Note: we've written finitely satisfiable in ``$I^{\leq\eta}$,'' but we've 
used ``$I^{\eta}$.'')

\noindent\emph{Proof of Claim \ref{n11}}.  
\end{proof}

\begin{rmk}  
The proof of Claim $\ref{n11}$ did not use any of the assumptions on $T$, in particular it did not use the simplicity of $T$; 
 so this is also true of Corollary $\ref{n12}$. 
\end{rmk}

\begin{cor} \label{n12} 
Let $I$ be standard. For every $\Phi \in \up$, there is $\Psi \in \up$ with 
$\Phi \leq \Psi$ such that  
every type of $\tau(T)$ which 
$\GEM(I^{\geq \eta} \cup I^{\leq \eta}, \Psi)$ 
realizes over 
$\GEM(I^{\perp \eta} \cup I^{\leq \eta}, \Psi)$ 
inside $\GEM(I, \Psi)$ is finitely satisfiable in $\GEM(I^{\leq \eta}, \Psi)$. 
\end{cor}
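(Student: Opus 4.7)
The plan is to obtain $\Psi$ by iterating Claim \ref{n11} an $\omega$-chain's worth of times. Set $\Phi_0 := \Phi$, and given $\Phi_n$, let $\Phi_{n+1} \geq \Phi_n$ be a witness to Claim \ref{n11} applied to $\Phi_n$ (this witness works for the one fixed standard $I$ under consideration, and for every $\eta$ at every level, since \ref{n11} gives a uniform $\Psi$). By Observation \ref{o:inc-seq}, the union $\Psi := \bigcup_{n<\omega} \Phi_n$ still belongs to $\Upsilon$, and each $\Phi_n \leq_\Upsilon \Psi$, so $\GEM_{\tau(T)}(J,\Phi_n) \preceq \GEM_{\tau(T)}(J,\Psi)$ for every $J \subseteq I$ in $\mk$, in particular for $J \in \{I,\, I^{\leq\eta},\, I^{\perp\eta} \cup I^{\leq\eta},\, I^{\geq\eta} \cup I^{\leq\eta}\}$.

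To verify the conclusion, I would fix $\eta$, a type $p$ over $A := \GEM(I^{\perp\eta} \cup I^{\leq\eta}, \Psi)$ realized in $\GEM(I,\Psi)$ by some $b \in \GEM(I^{\geq\eta} \cup I^{\leq\eta}, \Psi)$, and an arbitrary finite subtype $\{\vp_j(x,\bar{c}_j) : j < k\} \subseteq p$. The central observation is a finite-support step. Each element of a $\GEM$-model is a $\tau(\Psi)$-Skolem term applied to finitely many skeleton elements, and each such term mentions only finitely many symbols of $\tau(\Psi) = \bigcup_n \tau(\Phi_n)$; moreover the skeletons inside $\GEM(-,\Phi_n)$ and $\GEM(-,\Psi)$ coincide by the definition of $\leq_\Upsilon$ (\ref{d:order}). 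Hence there exists $n<\omega$ such that $b$ and every $\bar{c}_j$ are already expressible using only $\tau(\Phi_n)$-terms, giving $b \in \GEM(I^{\geq\eta} \cup I^{\leq\eta}, \Phi_n)$ and $\bar{c}_j \in \GEM(I^{\perp\eta} \cup I^{\leq\eta}, \Phi_n)$.

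Since $\Phi_n \leq_\Upsilon \Psi$ yields a $\tau(T)$-elementary embedding, the formula $\bigwedge_{j<k} \vp_j(b, \bar{c}_j)$ holds in $\GEM(I,\Phi_n)$, so $\{\vp_j(x,\bar{c}_j) : j<k\}$ is a finite subset of the $\tau(T)$-type of $b$ over $\GEM(I^{\perp\eta} \cup I^{\leq\eta}, \Phi_n)$ computed in $\GEM(I,\Phi_n)$. By Claim \ref{n11} applied to $\Phi_n$ with witness $\Phi_{n+1}$, this type is finitely satisfiable in $\GEM(I^{\leq\eta}, \Phi_{n+1})$, so one finds $b' \in \GEM(I^{\leq\eta}, \Phi_{n+1}) \subseteq \GEM(I^{\leq\eta},\Psi)$ realizing the given finite conjunction; elementarity then transfers this realization to $\GEM(I,\Psi)$, as required.

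The only point that needs real attention is the finite-support step. There is no new Ramsey or amalgamation content beyond Claim \ref{n11}; the corollary is a pure chain-closure argument, combining \ref{o:inc-seq} with the fact that each element of a $\GEM$-model lives in a finitely generated subterm algebra that sits uniformly across the chain.
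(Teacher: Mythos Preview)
Your proof is correct and follows exactly the paper's approach: build an $\omega$-chain $\Phi_0 = \Phi \leq \Phi_1 \leq \cdots$ by repeatedly applying Claim~\ref{n11}, and set $\Psi = \bigcup_n \Phi_n$, invoking Observation~\ref{o:inc-seq}. The paper's proof is a single sentence and leaves the verification implicit; your finite-support argument spells out precisely the reasoning the reader is meant to supply.
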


\begin{proof} 
Let $\Phi_0 = \Phi$. 
Choose $\Phi_n$ by induction on $1 \leq n < \omega$ to be the result of applying Claim \ref{n11} with $\Phi = \Phi_{n-1}$. 
Then $\Psi = \bigcup_n \Phi_n$ is the desired template, and $\Phi \leq \Psi \in \Upsilon$ recalling  Observation \ref{o:inc-seq}.
\end{proof}

Now we will use the hypothesis that $T$ is simple. 

\begin{lemma} \label{n14}
Let $I$ be standard with universe ${^{\kappa > }{\{ 0 \}}}$. For every $\Phi \in \up$, there is $\Psi \in \up$ with 
$\Phi \leq \Psi$ such that $M = \GEM_{\tau(T)}(I, \Psi)$ is $\mu$-saturated. 
\end{lemma}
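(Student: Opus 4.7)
The plan is to adapt the strategy of Claim \ref{m17} and Corollary \ref{m20}, now exploiting simplicity of $T_0$ through the independence theorem in place of the direct combinatorial argument available for the random graph. As in \ref{m20}, I would build $\Psi$ as a $\leq_\Upsilon$-continuous chain $\langle \Phi_\alpha : \alpha < \lambda \rangle$ in $\Upsilon$, where each successor step either realizes one further type over $\GEM(I, \Phi_\alpha)$ of size $<\mu$ or enforces the finite satisfiability property of Corollary \ref{n12}; since $|\GEM(I, \Phi_\alpha)| \leq \lambda$ and $\lambda^{<\mu} = \lambda$, there are at most $\lambda$ such types per stage and standard bookkeeping handles them all, with the union $\Psi$ lying in $\Upsilon$ by Observation \ref{o:inc-seq}. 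Everything thus reduces to the single-step claim: given $\Phi \in \Upsilon$ satisfying the conclusion of \ref{n12} and a type $p \in \ts_{\tau(T_0)}(\GEM(I, \Phi))$ with $|p| < \mu$, produce $\Phi' \geq \Phi$ in $\Upsilon$ and a new constant $c \in \tau(\Phi') \setminus \tau(\Phi)$ realizing (the image of) $p$ in $\GEM(I, \Phi')$.

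The single-step argument would follow the template of Claim \ref{m17}. Fix an $\aleph_0$-saturated $J \in \mk$ with $I \subseteq J$, set $N = \GEM(J, \Phi)$, and note $M \preceq N$ by \ref{c:nice}. Writing each parameter of $p$ as $\sigma_\alpha(\bar{a}_{\bar\eta_\alpha})$ for some tuple $\bar\eta_\alpha$ from $I$ and $\tau(\Phi)$-term $\sigma_\alpha$, define an amplified partial type $q(x)$ over $N$ by substituting, for each such $\bar\eta_\alpha$, every $\bar\eta' \in J$ of the same quantifier-free type, in analogy with equation (\ref{eq:q}). Once $q$ is shown consistent, realize it by a new constant $c$ in some elementary extension $N^+$ of $N$; after adjoining Skolem functions, Corollary \ref{d:ramsey-exp} applied to $N^+$ yields $\Phi' \geq \Phi$ in $\Upsilon$ with $c \in \tau(\Phi')$ reflecting $p$ in $\GEM(I, \Phi')$.

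The heart of the proof, and the main obstacle, is consistency of $q$. The plan is to check that any finite $q_0 \subseteq q$ is consistent by induction on the number of distinct copy-tuples $\bar\eta'_1, \dots, \bar\eta'_k$ appearing. Using $\aleph_0$-saturation of $J$ and the tree structure of $\mk^{tr}_\kappa$, these tuples can be arranged to share a common initial segment below some node $\eta_* \in J$ of level $<\kappa$, with their upper parts lying in pairwise perpendicular subtrees; let $M_0 \subseteq \GEM(J^{\leq \eta_*}, \Phi)$ serve as the common base, small enough to be controlled but large enough to be a model. By the assumed finite satisfiability (Corollary \ref{n12}), the type over $M_0$ together with the other branches' skeletons realized by any single branch's skeleton above $\eta_*$ is finitely satisfiable in $M_0$; since finitely satisfiable types over a model do not fork in any simple theory, each branch's restriction of $q_0$ is a non-forking extension of its common restriction to $M_0$. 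The independence theorem for simple theories, applied inductively across the $k$ branches and using $\kappa(T_0) \leq \kappa$, amalgamates these restrictions into a common non-forking extension, yielding consistency of $q_0$ and hence of $q$. The delicate bookkeeping is to ensure $\eta_*$ may be chosen deep enough to absorb all parameters of $q_0$ appearing below the branching, while keeping $M_0$ a suitable base model for the independence theorem.
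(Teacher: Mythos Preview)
Your approach has a genuine gap in the consistency argument for the amplified type $q$. Following Claim \ref{m17}, you amplify $p$ by substituting, for each $\bar\eta_\alpha$, every $\bar\eta' \in J$ of the same quantifier-free type. Since $I$ is a single branch, each $\bar\eta_\alpha$ lies on a single branch, so this means running over tuples on \emph{arbitrary} branches of $J$. But then $q$ can be inconsistent. For a concrete obstruction: suppose $\tau(T_0)$ contains an equivalence relation $E$, the template makes distinct skeleton elements $a_\rho$ at level $0$ pairwise $E$-inequivalent, and $p$ contains the formula $E(x, a_{\eta_0})$. Then $q$ contains $E(x, a_\rho)$ for every $\rho$ at level $0$ in $J$, which is inconsistent. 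Your proposed remedy, choosing $\eta_*$ ``deep enough'' for each finite $q_0$, cannot help: if two of the tuples in $q_0$ lie on branches diverging at level $0$, no common $\eta_*$ at positive level exists, and the common base model over which you would run the independence theorem is too small for the types $p_\eta$ to be non-forking over it.

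What is missing is the paper's opening move: before amplifying, use $\kappa \geq \kappa(T_0)$ to locate a single level $i_* < \kappa$ such that $p$ does not fork over $M_{i_*} = \GEM(\{\eta \in I : \lgn(\eta) < i_*\}, \Phi)$. One then amplifies only over branches of $J$ extending a fixed node $\nu$ at level $i_*$, forming $q_\nu = \bigcup\{p_\eta : \nu \triangleleft \eta\}$ where $p_\eta$ is the image of $p$ under the branch isomorphism. Now all the $p_\eta$ agree on $N_{\trianglelefteq \nu}$ and do not fork over it, so the independence theorem applies (inductively on finite subsets of branches). Corollary \ref{n12} enters only to certify that distinct branches are independent over $N_{\trianglelefteq \nu}$ --- not, as you write, to show that the type fragments themselves do not fork over the base. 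Finally, because $q_\nu$ depends on $\nu$, the realization must be recorded by a new unary function symbol $F_{i_*}$ with $F_{i_*}(a_\nu)$ realizing $q_\nu$, rather than by a constant; this is what makes the assignment uniform in quantifier-free type so that the Ramsey property \ref{d:ramsey-exp} transfers it to $\GEM(I, \Psi)$.
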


\begin{proof}
Let $I$ and $\Phi$ be given.  Without loss of generality $\Phi$ 
satisfies the conclusion of 
Corollary \ref{n12}.  Let $M = \GEM_{\tau(T)}(I, \Phi)$. It will suffice to show that if $p \in \ts(M)$ is a type over a 
set of size $<\mu$ then\footnote{We won't really use the size of $p$ when realizing a single type, but 
just as in \ref{m20}, it's important to keep track of size when iterating to produce saturation.} we can find $\Psi \geq \Phi$ such that $p$ is realized in 
$\GEM(I, \Psi)$. We can then iterate to obtain the template producing a $\mu$-saturated model just as in Claim \ref{m20}.  

The first use of simplicity will be not forking over a small set. For $i<\kappa$, let 
$M_i = \GEM_{\tau(T)}(\{ \eta \in I : \lgn(\eta) < i) \}, \Phi)$, so the sequence 
$\langle M_i : i \leq \kappa \rangle$ is $\preceq$-increasing continuous and its union $M_{\kappa} = M$.
As $T$ is simple and complete and $\kappa \geq \kappa(T)$, there is $i_* < \kappa$ such that 
$p$ dnf over $M_{i_*}$. For simplicity, we may assume $i_*$ is a successor. 

Towards finding $\Psi$, we move to work in a saturated index model. Let 
$\chi$ be infinite so $J = {^{\kappa >}\chi}$ is $\aleph_0$-saturated.  Let $N = \GEM_{\tau(T)}(J, \Phi)$, so $M \preceq N$. 
Let $\ma$ denote the skeleton of $N$, extending that of $M$.  
For every $\eta \in {^\kappa \chi}$ let $h_\eta$ be the canonical isomorphism from $I$ [recalling it is a single branch] onto 
$J_\eta = J \rstr \{ \eta \rstr i : i < \kappa \}$. Let $\hat{h}_\eta$ be the induced 
isomorphism from $M$ to $N_\eta = \GEM_{\tau(T)}(J_\eta, \Phi)$ and let $p_\eta = \hat{h}_\eta(p)$. 

We may likewise write these models as unions of chains: 
let $N_{\eta, i} = \GEM_{\tau(T)}( \{ \eta \in J_\eta  : \lgn(\eta) < i) \}, \Phi)$, for each $i<\kappa$. 
It remains true for each $\eta$ that $p_\eta \in \ts(N_\eta)$ dnf over $N_{\eta, i_*}$. 
We arrive to the second use of simplicity, the independence theorem. 

\begin{subclaim} \label{sc2b}
If $\nu \in {^{i_*} \chi}$ then 
\[ q_\nu = \bigcup \{ p_\eta : \eta \in {^\kappa \chi}, ~\nu \tlf \chi \} \]
is a partial type which dnf over $N_{\trianglelefteq \nu} = \GEM_{\tau(T)}(J \rstr \{ \rho \tlf \nu \}, \Phi)$.
\end{subclaim}

\begin{proof} It suffices to consider some finite $\Lambda \subseteq {^\kappa \lambda}$ and 
prove $q_\Lambda = \bigcup \{ p_\eta : \eta \in \Lambda \}$ dnf over $N_{\tlf \nu}$.
We prove this by induction on $|\Lambda|$.  If $|\Lambda| = 1$ this is immediate since 
each $p_\eta$ is a type which dnf over $N_{\tlf \nu}$. So assume $|\Lambda| = n+1 \geq 2$. 
Let $\eta_0, \dots, \eta_{n}$ list $\Lambda$ in lexicographically increasing order. 
Let $\rho_0 = \eta_{n-1} \cap \eta_{n}$, and let $\rho = \eta_{n-1} \rstr \lgn(\rho_0) + 1$. 

Let $q_* = \bigcup \{ q_\ell : \ell \leq n-1 \}$, which by inductive hypothesis 
is a partial type which dnf over $N_{\tlf \nu}$. 
Let $q_{**}$ be a complete nonforking extension of $q_*$ to $B = \bigcup \{ N_{\tlf\eta_\ell} : \ell \leq n-1 \}$. 
That is, $q_{**} \in \ts(B)$ dnf over $N_{\tlf \nu}$, so a fortiori dnf over $N_{\tlf \rho}$. 

We have already defined $B$. For clarity, let $A = N_{\tlf \rho}$, and let $C = N_{\tlf \eta_n}$. So 
$q_n \in \ts(C)$ dnf over $N_{\tlf \nu}$, so a fortiori dnf over $A$. 

Let's first prove that $q_{**} \cup q_n$ is consistent and dnf over $A$.  
We have that $A = B \cap C$, and $q_{**} \in \ts(B)$ dnf over $A$, $q_n \in \ts(C)$ dnf over $A$, and $q_{**} \rstr A = 
q_n \rstr A$ (because they agree on any common initial segment). In order to apply the independence theorem, 
we need to know $B$ is free from $C$ over $A$.   
$A$, $B$, $C$ are universes of models of $T_0$ and by Claim \ref{n11}, $tp(C,B)$ is finitely satisfiable in $A$, 
which suffices. 

We conclude that $Q = q_{**} \cup q_n$ is a consistent partial type which dnf over $A \supseteq N_{\tlf \nu}$.  
Recalling the definition of $A$, $Q \rstr A$ is a type which dnf over $N_{\tlf \nu}$ because it is just one of the images of $p$ 
under one of the automorphisms $h$. So by transitivity of nonforking for simple theories, $Q$ dnf over $N_{\tlf \nu}$, 
and this proves the subclaim. 
\noindent\emph{Proof of Subclaim \ref{sc2}}. \end{proof}

To complete the proof of Lemma \ref{n14}, let $N_*$ be a sufficiently saturated 
elementary extension of $N$ (so, also a $\tau(\Phi)$-model) in which 
for each $\nu \in {^{i_*}\chi}$ the type $q_\nu$ is realized by some $b_\nu$.  Add to $\tau(\Phi)$ a new 
unary function symbol $F_{i_*}$. Expand $N_*$ to $N^+_*$ by interpreting $F_{i_*}$ so that 
 $\nu \in {^{i(*)}\chi}$ implies $F^{N^+_*}_{i_*}(a_\nu) = b_\nu$, where $a_\nu$ belongs to the skeleton of $N \preceq N_*$. 
In this language, note that what the subclaim has really shown is 
that for any finite sequence $\bar{\eta}$ from a single branch of $I^{\geq \nu}$ and any formula $\vp(x,\bar{a}_{\bar{\eta}})$ in the given type $p$, 
whether or not $N^+_* \models \vp[F_{i_*}(a_\nu), \bar{a}_{\bar{\eta}})]$ is a property of the quantifier-free type of $\bar{\eta}$.  
Apply the Ramsey property \ref{d:ramsey-exp}, with $J, \GEM(J, \Phi)$ and $\ma$, $N^+_*$ here for 
$J, M$ and $\ma$, $N^+$ there, to 
obtain a template $\Psi \geq \Phi$ proper for $\mk$. 
By construction, the template $\Psi$ will have registered from $f$ the correct instructions (definition) to ensure realization. 
In particular, in the model $\GEM(I, \Psi)$, for $\nu = {^{i_*}{\{ 0 \}}}$, we have that $F_{i_*}(a_\nu)$
will realize $p$. 
\noindent\emph{Proof of Lemma \ref{n14}}.  
\end{proof}

The proof of \ref{n14} remains true restricting to a set of formulas $\Delta$ which are simple.  
(Replace ``complete nonforking extension in $\ts(B)$'' by ``nonforking extension in $\ts_\Delta(B)$''.)
It's worth noting the following special case of the above argument, when $\Delta$ is finite, so we may take $\kappa = \aleph_0$.

\begin{cor}  
\label{n14a}
Let $I$ be standard with universe ${^{\omega > }{\{ 0 \}}}$. Suppose $\Delta$ is a set of formulas of $T$ 
such that every $\Delta$-type in every model of $T$ does not fork over some finite set. 
Then for every $\Phi \in \up$, there is $\Psi \in \up$ with 
$\Phi \leq \Psi$ such that $M = \GEM_{\tau(T)}(I, \Psi)$ is $\mu$-saturated for $\Delta$-types.  
\end{cor}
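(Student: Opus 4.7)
The plan is to specialize the proof of Lemma \ref{n14} almost verbatim, exchanging the parameter $\kappa$ (an uncountable regular cardinal $\geq \kappa(T)$ there) for $\aleph_0$, and replacing ``type'' and ``complete nonforking extension in $\ts(B)$'' by ``$\Delta$-type'' and ``nonforking extension in $\ts_\Delta(B)$'' throughout, as the remark preceding the corollary directs. The hypothesis that every $\Delta$-type (in every model of $T$) does not fork over a finite set is exactly what licenses $\kappa = \aleph_0$: it lets the ``base'' of nonforking lie at a finite level of the standard tree ${^{\omega>}\{0\}}$, so that $\mk = \mk^{tr}_{\aleph_0}$ (Ramsey by the fact cited in \S\ref{s:gem}) is a sensible choice of index class.

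Concretely, first assume (using Corollary \ref{n12}, which applies to any $\Phi \in \up$ since its proof is independent of the hypotheses on $T$) that $\Phi$ has the finite-satisfiability property of \ref{n12}. Let $M = \GEM_{\tau(T)}(I, \Phi) = \bigcup_{i < \omega} M_i$ with $M_i$ the Skolem hull of the skeleton below level $i$, and let $p \in \ts_\Delta(M)$ be a $\Delta$-type over a set of size $<\mu$. By hypothesis, $p$ does not fork over some finite $A_0 \subseteq M$, and since $A_0$ is finite, $A_0 \subseteq M_{i_*}$ for some $i_* < \omega$; we may take $i_*$ a successor. Now pass to an $\aleph_0$-saturated $J = {^{\omega>}\chi} \supseteq I$ with $N = \GEM_{\tau(T)}(J, \Phi)$, and for each $\eta \in {^\omega \chi}$ form the transported $\Delta$-type $p_\eta$ on the branch through $\eta$, which does not fork over the initial segment $N_{\eta, i_*}$ of that branch.

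For each $\nu \in {^{i_*}\chi}$, prove by induction on finite $\Lambda \subseteq {^\omega \chi}$ above $\nu$ that $\bigcup \{p_\eta : \eta \in \Lambda\}$ is a consistent $\Delta$-type which does not fork over $N_{\trianglelefteq \nu}$; this is the analogue of Subclaim \ref{sc2b} in the proof of \ref{n14}, with the $\Delta$-version of the independence theorem in place of the full independence theorem, and with the hypothesis ``$B$ free from $C$ over $A$'' supplied as there by the finite-satisfiability guaranteed in \ref{n12}. Setting $q_\nu = \bigcup \{p_\eta : \nu \triangleleft \eta\}$, realize each $q_\nu$ by some $b_\nu$ in a sufficiently saturated elementary extension $N_*$ of $N$, expand to $N^+_*$ by a new unary function $F_{i_*}$ with $F_{i_*}(a_\nu) = b_\nu$, Skolemize, and apply the Ramsey property \ref{d:ramsey-exp} to produce $\Psi \geq \Phi$ in $\up$. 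In $\GEM_{\tau(T)}(I, \Psi)$, taking $\nu = {^{i_*}\{0\}} \in I$, the element $F_{i_*}(a_\nu)$ realizes $p$. Finally, iterate this one-type step $\lambda$ many times along a $\leq_\Upsilon$-continuous chain indexed by all $\Delta$-types over subsets of size $<\mu$ (cardinality arithmetic as in \ref{m20}, closure under unions by Observation \ref{o:inc-seq}) to obtain the desired $\Psi$ with $\GEM_{\tau(T)}(I, \Psi)$ $\mu$-saturated for $\Delta$-types.

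The only genuinely new point, and thus the main thing to check, is that the $\Delta$-version of the independence theorem can be applied in the inductive step: one has a pair of $\Delta$-nonforking extensions to $B$ and $C$ agreeing on $A = B \cap C$ (here $A = N_{\trianglelefteq \rho}$, $B, C$ the pieces coming from lower indexed branches and from $\eta_n$, respectively), with $\tp(C, B)$ finitely satisfiable in $A$ by \ref{n12}, and one needs their union to be a $\Delta$-nonforking type over $B \cup C$. The rest is bookkeeping: finiteness of the base replaces $i_* < \kappa(T)$ in \ref{n14} by $i_* < \omega$, and Ramseyness of $\mk^{tr}_{\aleph_0}$ replaces that of $\mk^{tr}_\kappa$, while every other step of the original proof transposes without modification.
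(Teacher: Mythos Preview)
Your proposal is correct and matches the paper's approach exactly: the paper does not give a separate proof of this corollary, but simply remarks before stating it that the proof of Lemma~\ref{n14} goes through restricting to $\Delta$-types (replacing ``complete nonforking extension in $\ts(B)$'' by ``nonforking extension in $\ts_\Delta(B)$'') and that the hypothesis on $\Delta$ is precisely what allows one to take $\kappa = \aleph_0$. You have spelled out this specialization in full, including correctly identifying the one point that needs attention---that the independence-theorem step still works at the level of $\Delta$-types---and the remaining bookkeeping (finite base yielding $i_* < \omega$, Ramseyness of $\mk^{tr}_{\aleph_0}$, iteration as in \ref{m20}).
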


We now return to the main line of the present argument.

\begin{claim} \label{n17} \emph{ }
Let $I$ be standard with universe ${^{\kappa >} \{0 \}}$. For any $\Phi \in \up$, 
the model $\GEM_{\{F_*\}}(I, \Phi)$ is not $\kappa^+$-saturated.  More precisely, it omits some
partial $\varphi$-type of cardinality $\kappa$, where $\varphi = \varphi(x,\bar{y}) = (F_*(y_0, x) = y_1)$. 
\end{claim}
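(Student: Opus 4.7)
My plan is to exhibit an explicit partial $\varphi$-type $p(x)$ of cardinality $\kappa$ omitted in $\GEM_{\{F_*\}}(I,\Phi)$, leveraging the chain structure of the standard $I$ (whose skeleton elements $\{\bar{a}_{\eta_\alpha}:\alpha<\kappa\}$ form a single linearly ordered family). For each $\alpha<\kappa$ I take $\bar{b}_\alpha=(b^0_\alpha,b^1_\alpha)$ with $b^0_\alpha$ some coordinate of $\bar{a}_{\eta_{2\alpha}}$ and $b^1_\alpha$ some coordinate of $\bar{a}_{\eta_{2\alpha+1}}$, chosen so that $\{b^0_\alpha:\alpha<\kappa\}$ are pairwise distinct (using the non-triviality of $\Phi$ from Convention \ref{c:nice}, if necessary by varying the coordinate). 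Set $p(x)=\{\varphi(x,\bar{b}_\alpha):\alpha<\kappa\}$, i.e., $\{F_*(b^0_\alpha,x)=b^1_\alpha:\alpha<\kappa\}$. The motivating intuition is that realizing $p$ would force a single $c$ to encode a ``shift'' of $\kappa$-many consecutive skeleton pairs along the chain, a pattern no finitely-supported $\tau(T_*)$-Skolem term can carry.

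Consistency of $p$ follows immediately from \ref{n2}(6)(a): any finite subset encodes a finite partial function with pairwise distinct domain values, and $T_*$ proves every such finite function is represented by some $F_*(-,a)$, witnessed inside $\GEM(I,\Phi)\models T_*$. For non-realization, suppose for contradiction $c\in M$ satisfies $p$ and write $c=\sigma^M(\bar{a}_{\eta_{i_1}},\ldots,\bar{a}_{\eta_{i_n}})$ for a $\tau(T_*)$-term $\sigma$ and indices $i_1<\cdots<i_n<\kappa$. For every $\alpha$ with $2\alpha,2\alpha+1>i_n$ (so cofinally many $\alpha$), the constraint $F_*(b^0_\alpha,c)=b^1_\alpha$ exhibits $b^1_\alpha$ as a $\tau(T_*)$-Skolem term in $\{\bar{a}_{\eta_{i_1}},\ldots,\bar{a}_{\eta_{i_n}},\bar{a}_{\eta_{2\alpha}}\}$, hence, via the elementary submodel property \ref{d:rmk}(1)(c), as an element of a $\GEM$-submodel indexed by a subset of $I$ not containing $\eta_{2\alpha+1}$.

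The main obstacle is concluding from this that $b^1_\alpha$ does not in fact lie in that submodel, given that Convention \ref{c:nice} only guarantees the whole tuple $\bar{a}_{\eta_{2\alpha+1}}$ is distinct from skeleton tuples inside, not that every coordinate escapes the Skolem closure. To resolve this I would let the construction depend on the coordinate chosen for $b^1_\alpha$: if every coordinate choice produced a realized type, then pigeonholing over the boundedly many coordinates and boundedly many Skolem-term patterns $(\sigma,n)$ would force cofinally many $\alpha$ to admit a uniform realization covering every coordinate of $\bar{a}_{\eta_{2\alpha+1}}$, whence the entire tuple $\bar{a}_{\eta_{2\alpha+1}}$ lies in the corresponding Skolem closure, contradicting non-triviality. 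Selecting the coordinate for which the type is omitted furnishes the desired partial $\varphi$-type of cardinality $\kappa$ and completes the proof.
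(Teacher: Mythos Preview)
Your approach has a genuine gap at the final step. You correctly observe that if $c$ realizes $p$, then for large $\alpha$ the element $b^1_\alpha$ (a coordinate of $\bar{a}_{\eta_{2\alpha+1}}$) lies in $\GEM(I',\Phi)$ for some $I'\subseteq I$ with $\eta_{2\alpha+1}\notin I'$, and you then seek a contradiction from this. But non-triviality in Convention~\ref{c:nice} only says the skeleton tuples $\langle \bar{a}_t : t\in I\rangle$ are without repetition; it does \emph{not} say that a skeleton tuple, or one of its coordinates, cannot lie in the Skolem closure of other skeleton tuples. Your pigeonhole workaround does not close this: even granting that every coordinate of $\bar{a}_{\eta_{2\alpha+1}}$ lands in such a submodel (and note the lengths $\lgn(\bar{a}_{\eta_{2\alpha+1}})$ are determined by level, so may be unbounded in $\alpha$, undermining ``boundedly many coordinates''), the conclusion $\bar{a}_{\eta_{2\alpha+1}}\in\GEM(I',\Phi)$ is not by itself a contradiction with anything recorded in \ref{c:nice}.

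The missing idea, which the paper supplies, is to leave the single branch $I$ and pass to a genuinely branching $J\supseteq I$ in $\mk$, say $J={^{\kappa>}\chi}$, with $N=\GEM(J,\Phi)\succeq M$. Choose $\alpha$ with $2\alpha>i_n$ and pick $\nu\in J$ at level $2\alpha+1$ extending $\eta_{2\alpha}$ but $\nu\neq\eta_{2\alpha+1}$. Then $(\eta_{i_1},\ldots,\eta_{i_n},\eta_{2\alpha},\eta_{2\alpha+1})$ and $(\eta_{i_1},\ldots,\eta_{i_n},\eta_{2\alpha},\nu)$ realize the same quantifier-free type in $J$, so full indiscernibility of the skeleton (via \ref{d:rmk}) gives $F_*(b^0_\alpha,c)=a_\nu$ alongside $F_*(b^0_\alpha,c)=a_{\eta_{2\alpha+1}}$, forcing $a_\nu=a_{\eta_{2\alpha+1}}$ and contradicting non-triviality directly. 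The contradiction thus comes from indiscernibility over a \emph{branching} index model, not from any intrinsic submodel argument inside the lone branch $I$; once you insert that step, the coordinate-by-coordinate pigeonhole becomes unnecessary.
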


\begin{proof}
Let $\eta_i \in {^i\{0\}}$, and let 
\[ p({x}) = \{ F(a_{\eta_{2i}}, x) = a_{\eta_{2i+1}} : i < \kappa \} \]
be the type of a code for a function which acts as a ``successor'' operation on even 
elements in this branch of 
of the skeleton.  
Towards contradiction assume $c \in M = \GEM(I, \Phi)$ realizes $p$. So there is a $\tau(\Phi)$-term 
$\sigma(t_0, \dots, t_{n-1})$ and $i_0 < \cdots < i_{n-1} < \kappa$ such that  
\[ M \models\mbox{``} c=\sigma(a_{\eta_{i_0}}, \cdots, a_{\eta_{i_{n+1}}}).\mbox{''} \]
Let $J$ be ${^{\kappa>}\chi}$, so $J$ is standard and extends $I$.  
Let $N = \GEM(J, \Phi)$. 
Recalling that the predicates $P_k$ name elements of level $k$, let 
$\nu \in P^{J}_{2i_{n-1}+4}$ be such that 
$\nu \rstr 2i_{n-1}+2 = \eta_{2i_{n-1}+2}$, but $\nu \neq \eta_{2i_{n-1}+3}$. By the choice of $c$,
\[ N \models \mbox{``} F_*(a_{\eta_{2i_{n-1}+2}}) = a_{\eta_{2i_{n-1}+3}}  \mbox{''} \]
but then by indiscernibility we must also have  
\[ N \models \mbox{``} F_*(a_{\eta_{2i_{n-1}+2}}) = a_\nu  \mbox{''} \]
contradicting $a_{\eta_{2i_{n-1}+3}} \neq a_\nu$. 
\end{proof}

\begin{cor} \label{n22}
Let $I$ be standard with universe ${^{\kappa >} \{0 \}}$ and let $T_{1}$ be the the non-simple theory 
fixed at the beginning of the section. If $T_{1}$ has $TP_2$, then for any $\Phi \in \up$, 
$\GEM_{\tau(T_{1})}(I, \Phi)$ is not $\kappa^+$-saturated.  
\end{cor}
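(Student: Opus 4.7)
My plan is to observe that Corollary \ref{n22} is essentially an immediate consequence of Claim \ref{n17} combined with the structural hypothesis \ref{n2}(6) on $T_*$ and $F_*$. The work has already been done in proving \ref{n17}; what remains is only to translate from the reduct $\GEM_{\{F_*\}}(I,\Phi)$ back to the reduct $\GEM_{\tau(T_1)}(I,\Phi)$.

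First, I would apply Claim \ref{n17} directly to our given $\Phi \in \up$ and $I$: this produces a partial type $p(x)$ of cardinality $\kappa$ (namely the ``successor'' type $\{F_*(a_{\eta_{2i}},x) = a_{\eta_{2i+1}} : i < \kappa\}$) which is omitted in the reduct $\GEM_{\{F_*\}}(I,\Phi)$. Note that $p$ is finitely satisfiable by hypothesis \ref{n2}(6)(a), which asserts that in any model of $T_*$ every finite function is represented by some $F_*(-,a)$; so $p$ really is a partial type in the model-theoretic sense, not merely an inconsistent set of formulas.

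Next I would invoke hypothesis \ref{n2}(6)(b), which says precisely that if the reduct of a model of $T_*$ to $\{F_*\}$ omits some type, then the reduct to $\tau(T_1)$ omits a type of the same cardinality. Applying this with $M = \GEM(I,\Phi)$ (which models $T_*$ by our convention that $\Phi \in \up$) and with the omitted type from Claim \ref{n17}, we conclude that $\GEM_{\tau(T_1)}(I,\Phi)$ omits some type of cardinality $\kappa$, hence is not $\kappa^+$-saturated.

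There is no real obstacle: the entire content of the corollary lies in having chosen $F_*$ carefully back in \ref{n2}(6) so that non-saturation in the reduct to $\{F_*\}$ transfers to non-saturation in the reduct to $\tau(T_1)$, together with the indiscernibility argument already carried out in Claim \ref{n17}. The only minor point worth being explicit about is confirming that the omitted type produced by \ref{n17} has cardinality exactly $\kappa$ (which it does, since it has one formula per $i < \kappa$) so that the transfer via \ref{n2}(6)(b) gives a type of cardinality $\kappa$, witnessing failure of $\kappa^+$-saturation.
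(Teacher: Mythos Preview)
Your proof is correct and follows exactly the same approach as the paper's: invoke Claim \ref{n17} to obtain an omitted type of cardinality $\kappa$ in the $\{F_*\}$-reduct, then use hypothesis \ref{n2}(6) to transfer the omission to the $\tau(T_1)$-reduct. The paper's proof is a single sentence to this effect; your version simply spells out the details more carefully.
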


\begin{proof}
By our hypothesis \ref{n2} the theory represents $F_*$, so apply Claim \ref{n17}. 
\end{proof}

\begin{concl} \label{n21}
Let $I =  {^{\kappa > }{\{ 0 \}}}$. There is $\Phi \in \up$ such that writing $M = \GEM_{\tau(T)}(I, \Phi)$ 
and $N = \GEM_{\tau(F_*)}(I, \Phi)$ we have that $M$ is $\mu$-saturated but $N$ is not $\kappa^+$-saturated.  
Moreover, for this same $\Phi$, if $T_{1}$ has $TP_2$ then $\GEM_{\tau(T_{1})}(I, \Phi)$ is not $\kappa^+$-saturated. 
\end{concl}

\begin{proof} 
By Lemma \ref{n14}, Claim \ref{n17}, and Corollary \ref{n22}.  
\end{proof}

\begin{proof}[Proof of Theorem \ref{t:simple}]
There are two cases. If $T_{1}$ has $SOP_2$ then by \ref{maxl-fact} it is already maximal under $\tlf^*_1$. 
If $T_{1}$ has $TP_2$ apply Conclusion \ref{n21}. 
\end{proof}

\br

\section{Incomparability in $\tlf$ and $\tlf^*$} \label{s:incomp}

\begin{defn} For any finite $k$, let $T_{k+1, k}$ be the generic $(k+1)$-ary hypergraph which forbids a complete hypergraph on $(k+2)$-vertices; 
for $k \geq 2$, these theories were shown to be simple with trivial forking by Hrushovski. 
\end{defn}

\begin{defn}
Consider a model $M$ whose domain is partitioned by predicates $P^M$, $Q^M$. $E^M$ is an equivalence relation on $Q^M$ with infinitely many classes. 
$R^M \subseteq P^M \times Q^M$ is a binary relation. Each element of $P^M$ connects, via $R$, to precisely $n$ elements of the $n$-th equivalence class 
of $E^M$.
Define the ``canonical simple non-low theory'' $T^*$ as the theory of the existential closure of $M$. 
\end{defn} 

\begin{thm-lit} \label{thm-ap}  Assume an uncountable supercompact cardinal exists. 
For $2 \leq k < \omega$, $\mct_k$ and $\mct^*$ are incomparable in Keisler's order. 
\end{thm-lit}

Theorem A was discovered independently by D. Ulrich in 2017 \cite{ulrich}, using the equivalent simple non-low theory of Casanovas, and by the authors in 2015 \cite{MiSh:F1530}, but the latter proof was not published. 
For completeness, 
we include it in the Appendix.  

\begin{cor} \label{c52} Assuming existence of an uncountable supercompact cardinal,  
$\mct_k$ and $\mct^*$ are incomparable in $\tlf^*_1$ and $\tlf^*_{\aleph_1}$. 
\end{cor}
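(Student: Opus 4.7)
The plan is to deduce $\tlf^*$-incomparability directly from Keisler-order incomparability (Theorem A), using the reduction already established in \S\ref{s:known}, namely Claim \ref{c15} together with the monotonicity Corollary \ref{corp3}. No new combinatorics is needed; the argument is purely a translation between the two orders.

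First I would verify the form in which we need Theorem A. The statement as given asserts incomparability in Keisler's order $\tlf$. As emphasized at the end of \S\ref{s:known}, the known ZFC (and large-cardinal) dividing lines in Keisler's order hold for arbitrarily large $\lambda$; in particular, inspecting the proof of Theorem A (in the Appendix) one checks that the supercompact-based construction produces, for arbitrarily large regular $\lambda$, regular ultrafilters $\de_1, \de_2$ on $\lambda$ witnessing $\neg(\mct_k \tlf_\lambda \mct^*)$ and $\neg(\mct^* \tlf_\lambda \mct_k)$ respectively. This is the version of Theorem A I will feed into the machinery.

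Next I would apply Claim \ref{c15} twice. Taking $(T_j, T_i) = (\mct_k, \mct^*)$, for arbitrarily large regular $\lambda$ we obtain $\neg(\mct_k \tlf^*_{\lambda^+, \aleph_1} \mct^*)$; since the defining clause of $\tlf^*_{\aleph_1}$ requires the implication to hold for \emph{all} sufficiently large regular cardinals, the existence of arbitrarily large counterexamples yields $\neg(\mct_k \tlf^*_{\aleph_1} \mct^*)$. Swapping the roles gives $\neg(\mct^* \tlf^*_{\aleph_1} \mct_k)$. This establishes incomparability in $\tlf^*_{\aleph_1}$.

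Finally I would transfer to $\tlf^*_1$ by monotonicity. By Corollary \ref{corp3}, $\neg(\mct_k \tlf^*_{\aleph_1} \mct^*)$ implies $\neg(\mct_k \tlf^*_1 \mct^*)$, and symmetrically for the other direction. Hence $\mct_k$ and $\mct^*$ are also incomparable in $\tlf^*_1$, completing the proof. The only point requiring care, and the one I would flag as the main obstacle, is ensuring that the appeal to Theorem A really produces incomparability at arbitrarily large $\lambda$ rather than at a single $\lambda$; this hinges on the fact that the supercompactness hypothesis gives regular ultrafilters witnessing non-saturation on a proper class of cardinals, which is exactly the feature highlighted in \S\ref{s:known}.
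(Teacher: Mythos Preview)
Your proposal is correct and matches the paper's intended argument: the paper gives no explicit proof of Corollary~\ref{c52}, treating it as immediate from Theorem~A via the machinery of \S\ref{s:known} (specifically Claim~\ref{c15} or equivalently Corollary~\ref{c16}, together with the remark that the Keisler-order separations hold for arbitrarily large $\lambda$). Your flagged point about needing incomparability at arbitrarily large $\lambda$ is exactly right, and the Appendix construction (with freely chosen suitable cardinal parameters) confirms this.
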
 

\begin{thm-lit}[\cite{MiSh:1140} Theorem 7.2]  Let $k \geq 2$, let $T_{k+1,k}$ be as above, and let $T_f$ be the simple low theory from \cite{MiSh:1140} Definition 2.4. 
Then $T_{k+1,k}$ and $T_f$ are incomparable in Keisler's order, in ZFC. 
\end{thm-lit}

\begin{cor}
Let $k \geq 2$ and let $T_{k+1,k}$ and $T_f$ be as above. Then  $T_{k+1,k}$ and $T_f$ are incomparable in $\tlf^*_1$ and 
$\tlf^*_{\aleph_1}$, in ZFC. 
\end{cor}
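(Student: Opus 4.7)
The plan is to deduce this corollary from the cited Keisler-incomparability theorem via Claim \ref{c15} and monotonicity, exactly paralleling the strategy used for Corollary \ref{c52} above, with the key difference being that here the input is a ZFC incomparability result rather than one requiring a supercompact. So there is essentially no new work: the content is entirely in the quoted \cite{MiSh:1140} Theorem 7.2.

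First I would invoke the preceding theorem of \cite{MiSh:1140} in its uniform-in-$\lambda$ form: for \emph{arbitrarily large} regular $\lambda$, both $\neg(T_{k+1,k} \tlf_\lambda T_f)$ and $\neg(T_f \tlf_\lambda T_{k+1,k})$ hold in ZFC. This uniformity is standard for the known ZFC dividing lines in Keisler's order, as noted at the end of Section \ref{s:known}; one should double-check that the proof in \cite{MiSh:1140} indeed produces, for each sufficiently large target $\lambda$, regular ultrafilters on $\lambda$ witnessing non-saturation in each direction, which is how these arguments are typically set up.

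Next, for each such $\lambda$, apply Claim \ref{c15} in both directions: $\neg(T_{k+1,k} \tlf_\lambda T_f)$ yields $\neg(T_{k+1,k} \tlf^*_{\lambda^+, \aleph_1} T_f)$, and $\neg(T_f \tlf_\lambda T_{k+1,k})$ yields $\neg(T_f \tlf^*_{\lambda^+, \aleph_1} T_{k+1,k})$. Since these failures occur for arbitrarily large regular $\lambda^+$, the definition of $\tlf^*_{\aleph_1}$ (recall Summary \ref{s:sum}(2)) gives $\neg(T_{k+1,k} \tlf^*_{\aleph_1} T_f)$ and $\neg(T_f \tlf^*_{\aleph_1} T_{k+1,k})$, i.e.\ incomparability in $\tlf^*_{\aleph_1}$.

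Finally, apply the monotonicity statement Corollary \ref{corp3}: since $\neg(T_0 \tlf^*_{\aleph_1} T_1)$ implies $\neg(T_0 \tlf^*_1 T_1)$, both directions lift to give incomparability in $\tlf^*_1$ as well. The only conceivable obstacle is the preliminary sanity-check that the \cite{MiSh:1140} argument is genuinely uniform in $\lambda$ (rather than producing incomparability only at a single cardinal), but this is the intended reading of the cited result and matches the general pattern for ZFC dividing lines recorded in Section \ref{s:known}.
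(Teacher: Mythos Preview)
Your proposal is correct and is precisely the intended argument: the paper gives no explicit proof for this corollary, but the final paragraph of Section~\ref{s:known} states that all such transfers from Keisler's order to $\tlf^*$ go by invoking \ref{c15} or \ref{c16} together with the observation that the ZFC dividing lines hold for arbitrarily large $\lambda$. Your route through Claim~\ref{c15} and monotonicity (Corollary~\ref{corp3}) is exactly this.
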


Theorem A remains valuable after the discovery of Theorem B since the theories are different. In some sense, the mechanism of Theorem A works by leveraging forking against independence,  as explained below, whereas the mechanism of Theorem B works on two low theories with trivial forking, 
leveraging imperfections coming from trees against imperfections coming from amalgamation. 

\begin{qst}  \label{q:one}
Do there exist proofs of incomparability in $\tlf^*_{\aleph_1}$ having no analogue in Keisler's order? For example, can \ref{c52} be proved 
directly in ZFC? 
\end{qst}

\begin{disc} \label{d:two}
\emph{A model-theoretic value of the proof of Theorem A lies in highlighting a certain independence between complexity in the sense of forking and complexity in the sense of independence in these simple theories. On the set-theoretic side, it gives yet another proof of separation of OK and good, under large cardinal hypotheses, addressing a question of Dow from 1985 (which we had previously proved under the assumption of a measurable 
cardinal \cite{MiSh:996}): in this sense, we may think of Theorem A as allowing for a further calibration of the level of goodness of various OK ultrafilters, 
using the $T_{k+1,k}$'s, e.g.: }
\end{disc} 

\begin{cor} \label{c:ok} Assuming $k \geq 2$, $\lambda, \mu, \theta, \sigma$ are suitable, $\lambda = \mu^{+k+1}$,  and $\theta = \sigma$ is uncountable and supercompact, there exists a regular ultrafilter on $\lambda$ which is OK, and good for the random graph, but not good for $T_{k+1,k}$. 
\end{cor}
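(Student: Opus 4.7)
The plan is to read this off from the construction underlying Theorem A in the Appendix, together with standard features of iterative regular ultrafilter constructions. Starting from the supercompact $\sigma = \theta$ and working at $\lambda = \mu^{+k+1}$, that construction produces a regular ultrafilter $\de$ on $\lambda$ which saturates $\mct^*$ (and hence witnesses the separation $\neg(\mct_k \tlf \mct^*)$ of Theorem A) yet fails to saturate $T_{k+1,k}$.

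First I would observe that goodness for the random graph comes for free: since $\trg$ is $\tlf$-minimum among the unstable theories and $\mct^*$ is unstable (indeed simple and non-low), any regular ultrafilter which $\lambda^+$-saturates ultrapowers of models of $\mct^*$ is a fortiori good for $\trg$. So this $\de$ is automatically good for the random graph. Failure of goodness for $T_{k+1,k}$ is precisely the content of Theorem A in the direction $\neg(T_{k+1,k}\, \tlf\, \mct^*)$ and is built into $\de$ by construction.

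The remaining point, and the main obstacle, is to arrange that $\de$ is OK in Kunen's sense. Here I would rely on the fact that the standard iterative construction of regular ultrafilters via multiplicative compatible sequences, as employed in the Appendix and in \cite{MiSh:996}, produces ultrafilters which are OK essentially by design: OK-ness corresponds to the ability to extend a single independent family in a controlled, non-branching way, which is strictly weaker than goodness for any nontrivial unstable theory and is guaranteed by the usual bookkeeping along the iteration.

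The delicate point is the simultaneous control. One must verify that the specific demands imposed during the Theorem A construction in order to \emph{defeat} $T_{k+1,k}$-saturation do not also accidentally destroy OK-ness. The expected reason this works is that OK-ness concerns only the extendibility of a single multiplicative sequence, whereas refusing to saturate $T_{k+1,k}$ amounts to refusing to extend a certain $(k+1)$-dimensional system of multiplicative sequences whose compatibility is obstructed by the forbidden-clique axiom; so the two requirements operate essentially on disjoint parts of the construction. Alternatively, one may start with an OK base ultrafilter (as in Kunen, or as in \cite{MiSh:996}) and apply the Theorem A machinery to extend it to $\de$, invoking the standard preservation lemmas for OK-ness under such extensions within this framework.
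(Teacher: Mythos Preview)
Your route to goodness for the random graph and to failure of goodness for $T_{k+1,k}$ is fine and matches the paper's implicit argument: the $(\lambda,\mu,\theta,\sigma)$-optimized ultrafilter with $\theta=\sigma$ uncountable supercompact and $\lambda=\mu^{+k+1}$ does both, by Claim~\ref{sat-low} (or its footnote) and Claim~\ref{c:upgrade}(a) with $\ell = k+1 > k$.

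The gap is in your argument for OK-ness. You appeal to general features of ``iterative regular ultrafilter constructions'' and to unspecified ``standard preservation lemmas,'' but this is not a proof as written: optimized ultrafilters are built via the separation-of-variables quotient construction of \cite{MiSh:1030}, not via a direct iteration of the kind you seem to have in mind, and there is no blanket reason such a construction yields OK ultrafilters. Your alternative of ``starting with an OK base ultrafilter and extending'' likewise does not fit the actual construction.

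The intended argument is much shorter and avoids all of this. The footnote to Claim~\ref{sat-low} already records that when $\sigma>\aleph_0$ the optimized ultrafilter is \emph{flexible} (by \cite{MiSh:1030}~5.16). The fact you are missing is that for a regular ultrafilter on $\lambda$, being $\lambda$-flexible is equivalent to being $\lambda$-OK; this equivalence is in \cite{MiSh:996}. So OK-ness follows in one line from flexibility, with no need to inspect the internal bookkeeping of the construction. The discussion immediately preceding the corollary, which references \cite{MiSh:996} and Dow's question on OK versus good, is pointing at exactly this connection.
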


Our earlier separation of good and OK using a measurable produced an ultrafilter not good for the random graph; here the 
cardinal assumption is stronger, but also the ultrafilter is closer to being good.

\section{Discussion: weak definability of types} \label{s:wd}

From our proofs of Theorems \ref{t:trg-s} and \ref{t:simple} one may extract the following principle.

\begin{hyp} \label{d:cont1}
Fix for this section: 
\begin{enumerate}[a)]
\item a theory $T$.
\item a class of index models $\mk$ satisfying the Ramsey property $\ref{d:ramsey-exp}$.
\item a class $\Upsilon$ of templates $\Phi$ proper for $\mk$ satisfying $\ref{c:nice}$, and with $\tau(\Phi) \supseteq T$ for each $\Phi \in \Upsilon$, 
recalling that $\ref{c:nice}$ implies $T_\Phi$ is well defined and has Skolem functions.
\item $\leq$ the natural order on $\Upsilon$. 
\item \emph{thus} the set $\operatorname{Terms}$ of $\tau(\Phi)$-terms.
\end{enumerate}
\end{hyp}

\begin{defn} \label{d:gd}
Suppose $I \in \mk$, $\Phi \in \Upsilon$, $M = \GEM(I, \Phi)$ with skeleton $\ma$, $\Delta$ is a set of $\ml(\tau_T)$-formulas, $p$ a partial type 
$p \subseteq q \in \ts_\Delta(M)$. 
We may say $p$ has a \emph{weak  definition} 
if there is a partial function 
\[ F: \Delta \times {^{\omega>}(\operatorname{Terms})} \times D_{\operatorname{qf}}(I) \rightarrow \{ 0, 1 \} \] such that 
for some $\aleph_0$-saturated $J \in \mk$,  
when evaluated in $N = \GEM_{\tau(T)}(J, \Phi)$, the set of formulas
\begin{align} \label{gd:eqn}
\begin{split} 
\{ \vp(\bar{x}, \bar{\sigma}(\bar{a}_{\bar{t}}))^\trv & \colon \hspace{5mm} \vp \in \Delta, ~ \bar{\sigma} \in {^{\omega>}(\operatorname{Terms})}, \\  
& \hspace{8mm} \bar{t} \subseteq J,  ~\qftp(\bar{t}, \emptyset, J) = \xr \\ 
& \hspace{4mm} \mbox{  and }\trv = F(\vp, \bar{\sigma}, \xr) \in \{ 0, 1 \} \} \\
\end{split}
\end{align}
 is a partial type which 
 extends $p$. 
\end{defn}

Note that if $\bar{t}$, 
$\bar{\sigma}$ don't have the appropriate length or size for the given $\vp$, the function $F$ from may be undefined on the 
tuple $(\vp, \bar{\sigma}, \xr)$; but order to meet the condition that (\ref{gd:eqn}) extends $p$, 
$F$ will need to be defined on all of the tuples $(\vp, \bar{\sigma}, \xr)$ which arise from $p$.\footnote{Moreover, if $\vp(x_0, \dots, x_{m-1}, y_0, \dots, y_{n-1}) \in \Delta$, and $\bar{\sigma} = \langle \sigma_0, \dots, \sigma_{n-1} \rangle$ 
is a finite sequence from $\operatorname{Terms}$, then without loss of generality (by adding dummy variables) we may assume these terms all have 
the same number $r$ of free variables, and so if $\bar{a}_{\bar{t}}$ is from the skeleton and $\ell(\bar{a}_{\bar{t}})=r$, 
we may write ``$\vp(\bar{x}, \bar{\sigma}(\bar{a}_{\bar{t}}))$'' 
for $\vp(\bar{x}, \sigma_0(\bar{a}_{\bar{t}}), \dots, \sigma_{n-1}(\bar{a}_{\bar{t}}))$.}  So this condition 
does generalize e.g. $(\ref{eq:q})$ from the proof of Claim \ref{m17}. 

\begin{rmk} \label{rmk15}
Definition \ref{d:gd} can be extended to include weak definitions over some finite $\bar{t}^* \subseteq I$, but since this was not 
used in the present proofs, we defer this to the companion paper \cite{MiSh:F1692}.  
\end{rmk}

\begin{claim} \label{o:stabledef} 
Suppose $T_\Phi$ has Skolem functions for $T$.  
If $p$ has a definition over the empty set in $M$, 
 a finite subset of $M$ in the usual sense of stability theory, then $p$ has a weak  definition in the 
sense of Definition $\ref{d:gd}$.
\end{claim}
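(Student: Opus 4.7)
The plan is to translate the stability-style definition schema of $p$ directly into a weak definition in the sense of \ref{d:gd}. Fix any $\aleph_0$-saturated $J \in \mk$ with $I \subseteq J$, and let $N = \GEM(J, \Phi)$, so by \ref{c:nice} we have $M \preceq N$. By hypothesis, for each $\vp(\bar{x},\bar{y}) \in \Delta$ there is a formula $d_p\vp(\bar{y}) \in \ml(\tau(T))$ over $\emptyset$ such that $\vp(\bar{x},\bar{b}) \in p$ iff $M \models d_p\vp(\bar{b})$ for every $\bar{b} \in M$.

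First I would define $F$ on appropriate triples by picking any $\bar{t}$ in $J$ realizing $\xr$ and setting $F(\vp,\bar{\sigma},\xr) = 1$ iff $N \models d_p\vp(\bar{\sigma}(\bar{a}_{\bar{t}}))$. Well-definedness is the first thing to check: since $T_\Phi$ has Skolem functions and $\Phi$ is very nice, Remark \ref{d:rmk}(1)(b) upgrades quantifier-free indiscernibility of $\langle \bar{a}_t : t \in J \rangle$ to full indiscernibility in $N$. Hence for any two $\bar{t},\bar{t}'$ realizing $\xr$, the tuples $\bar{\sigma}(\bar{a}_{\bar{t}})$ and $\bar{\sigma}(\bar{a}_{\bar{t}'})$ have the same complete type over $\emptyset$ in $N$, so the parameter-free formula $d_p\vp$ takes the same truth value on both.

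Next I would verify that the resulting set of formulas (\ref{gd:eqn}) extends $p$. Each $\vp(\bar{x},\bar{b}) \in p$ admits $\bar{b} = \bar{\sigma}(\bar{a}_{\bar{t}_*})$ for some $\tau(\Phi)$-term tuple $\bar{\sigma}$ and some $\bar{t}_* \in I \subseteq J$, because $M$ is generated by its skeleton (\ref{d:generates}). Setting $\xr = \qftp(\bar{t}_*,\emptyset,J)$ and $\bar{t}=\bar{t}_*$ puts $\vp(\bar{x},\bar{b})^{F(\vp,\bar{\sigma},\xr)}$ into (\ref{gd:eqn}), and by construction $F(\vp,\bar{\sigma},\xr)=1$ iff $N \models d_p\vp(\bar{b})$ iff $M \models d_p\vp(\bar{b})$ (using $M \preceq N$ and that $d_p\vp$ is parameter-free) iff $\vp(\bar{x},\bar{b}) \in p$, so the correct truth value is recorded.

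The step I expect to be the main obstacle is verifying that (\ref{gd:eqn}) is consistent. The plan is to realize it as a subset of the candidate global extension $p' := \{\vp(\bar{x},\bar{c})^\trv : \vp \in \Delta,~\bar{c} \in N,~\trv \in \{0,1\},~\trv = 1 \iff N \models d_p\vp(\bar{c})\}$, and to prove $p'$ consistent. Given a finite fragment $\{\vp_i(\bar{x},\bar{c}_i) : i<k\}$ of $p'$, the witnesses $\bar{c}_i$ give $N \models \exists\bar{y}_1\cdots\bar{y}_k\bigwedge_i d_p\vp_i(\bar{y}_i)$, so by $M \preceq N$ the same holds in $M$, producing witnesses $\bar{b}_i \in M$; then $\vp_i(\bar{x},\bar{b}_i) \in p$ for each $i$, and their conjunction is consistent as a subset of a type, so $M \models \exists\bar{x}\bigwedge_i\vp_i(\bar{x},\bar{b}_i)$. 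Since this argument applies to every choice of witnesses in $M$, the universal sentence $\forall\bar{y}_1\cdots\bar{y}_k[\bigwedge_i d_p\vp_i(\bar{y}_i) \to \exists\bar{x}\bigwedge_i \vp_i(\bar{x},\bar{y}_i)]$ holds in $M$, hence in $N$ by elementarity, yielding a realization of the fragment inside $N$. This is the standard fact that definable types extend consistently to elementary extensions, and the empty-parameter hypothesis on $d_p\vp$ is exactly what lets the universal sentence travel from $M$ to $N$. An extension of the claim allowing definitions over a finite $\bar{t}^* \subseteq I$, as in Remark \ref{rmk15}, would require carrying $\bar{t}^*$ through both the indiscernibility argument and this elementarity transfer.
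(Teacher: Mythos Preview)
Your proof is correct and follows the same approach as the paper: define $F$ via the stability-theoretic defining schema $d_p\vp$, and use that Skolem functions upgrade quantifier-free indiscernibility of the skeleton to full indiscernibility so that the truth of the parameter-free formula $d_p\vp(\bar{\sigma}(\bar{a}_{\bar{t}}))$ depends only on $\qftp(\bar{t},\emptyset,J)$. The paper's own proof is much terser, recording only the well-definedness step and leaving the verification that (\ref{gd:eqn}) extends $p$ and is consistent as immediate from the standard fact that a definable type transfers to any elementary extension; you have spelled out that transfer explicitly, which is a reasonable thing to do but not a genuinely different route.
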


\begin{proof}[Proof of \ref{o:stabledef}] 
If $p$ is definable over $\emptyset$, then for each $\vp(\bar{x}, \bar{y}) \in \Delta$ there is a $\tau(T)$-formula $d_\vp(\bar{y})$ giving the definition.   
Fix $\bar{\sigma} \in {^{\ell(\bar{y})}(\operatorname{Terms})}$ and consider any finite sequence $\bar{t} \in {^{\omega>}I}$ for 
which $\bar{\sigma}(\bar{a}_{\bar{t}})$ can be evaluated.  Let 
$\xr = \qftp(\bar{t}, \emptyset, J)$.  Since $\Phi$ is a template, for all $\bar{s}$ with 
$\qftp(\bar{s}, \emptyset, J) = \xr$, 
\[ \qftp(\bar{a}_{\bar{t}}, \emptyset, N) = \qftp(\bar{a}_{\bar{s}}, \emptyset, N).\]
The assumption that $\tau(\Phi)$ has Skolem functions for $T$ improves this to 
\[ \tp(\bar{a}_{\bar{t}}, \emptyset, N) = \tp(\bar{a}_{\bar{s}}, \emptyset, N). \]
In particular, 
\[ N \models d_\vp(\bar{\sigma}(\bar{a}_{\bar{t}})) \iff N \models d_\vp(\bar{\sigma}(\bar{a}_{\bar{s}})). \]
Note Skolem functions are not needed for quantifier-free definitions. 
\end{proof}

\begin{obs} 
If $p$ has a weak  definition in the 
sense of Definition $\ref{d:gd}$, this does \emph{not} imply $p$ has a definition in the usual sense of stability theory, 
even assuming Skolem functions for $T$. 
\end{obs}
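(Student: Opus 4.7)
The plan is to give a direct counterexample via the random graph together with Claim~\ref{m17}. Take $T = \trg$, $\mk = \mk_\lambda$ from Definition~\ref{d8}, and let $\Upsilon$ consist of templates $\Phi$ proper for $\mk$, very nice (so $T_\Phi$ has Skolem functions by~\ref{c:nice}), and for which $\GEM(I,\Phi) \rstr \tau(\trg) \models \trg$ with skeleton $\ma = \langle a_t : t \in I \rangle$ consisting of distinct, pairwise $R$-unrelated singletons. Such $\Phi$ exist by applying \ref{d:ramsey-exp} to any distinct, pairwise non-adjacent $\aleph_0$-indexed sequence in a countable Skolemized model of $\trg$. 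In particular the ``Skolem functions for $T$'' hypothesis of the observation is in force.

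Fix a countably infinite separated $I \in \mk$, so each $t \in I$ carries its own color, and set $M = \GEM(I,\Phi)$. Since $M$ is countable, it has only countably many $\tau(\trg)$-definable subsets with parameters; choose $\eta : I \to \{0,1\}$ so that $\{\, a_t : \eta(t)=1\,\}$ differs from each of these, possible since $2^{I}$ is uncountable. Let
\[
p(x) \;=\; \{\, R(x, a_t)^{\eta(t)} \wedge x \neq a_t \,:\, t \in I \,\},
\]
a partial type by the randomness axioms of $\trg$.

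By separation, $\xr_t := \qftp(t,\emptyset,I)$ uniquely identifies $t$ via its color, so the assignment $F(R, \sigma_{=}, \xr_t) := \eta(t)$ is well-defined (with $\sigma_{=}$ the identity term). I claim this $F$ is a weak definition of $p$ in the sense of Definition~\ref{d:gd}: the extension of $p$ it produces over an $\aleph_0$-saturated $J \supseteq I$ in $\mk$, evaluated in $N = \GEM_{\tau(T)}(J,\Phi)$, is precisely the type $q$ of equation~(\ref{eq:q}) from Claim~\ref{m17}, whose consistency is the content of that proof. On the other hand, a stability-sense definition of the $R$-part of $p$ would, per Claim~\ref{o:stabledef}, be a $\tau(\trg)$-formula $d_R(y;\bar c)$ with $\bar c$ finite from $M$ defining $\{\, a_t : \eta(t)=1\,\}$; this is ruled out by the choice of $\eta$. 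Hence $p$ is weakly definable but not stability-definable.

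The main obstacle in such an argument is verifying that the weak definition we exhibit is genuinely meaningful (equation~(\ref{gd:eqn})) rather than a trivial assignment; here that reduces to the consistency of $q$, already established in Claim~\ref{m17}. The quantifier elimination available in $\trg$ makes the non-definability transparent, and no further subtleties arise.
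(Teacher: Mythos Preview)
Your approach is correct in spirit and is a concrete working-out of the paper's two-line proof, which simply observes that definability of all types characterizes stability, while \S\ref{s:trg-max} (Claim~\ref{m17}) and \S\ref{s:simple} already produced weak definitions for types in unstable theories. Where the paper gestures at the earlier sections, you explicitly invoke Claim~\ref{m17} on a particular $R$-type over a separated $I$; this is the same mechanism.

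There is, however, a small slip in the non-definability half. You choose $\eta$ so that $\{a_t : \eta(t)=1\}$ is not \emph{equal} to any $\tau(\trg)$-definable subset of $M$, and then claim that a stability definition $d_R(y;\bar c)$ would define exactly this set. But a stability definition of (a complete extension of) $p$ would define $\{b \in M : R(x,b)\in \hat p\}$, a set whose \emph{trace} on the skeleton is $\{a_t : \eta(t)=1\}$; the full definable set may well contain non-skeleton elements. Since the skeleton need not itself be $\tau(\trg)$-definable, ``not equal to a definable set'' does not rule this out. The fix is immediate: there are still only countably many traces of definable sets on $\{a_t : t\in I\}$, so choose $\eta$ to avoid those instead. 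Relatedly, Claim~\ref{m17} is stated for complete $p\in\ts_R(M)$, so it is cleanest to pass to a complete extension $\hat p\supseteq p$ before invoking it (the proof there goes through for partial types as well, but you should say so if you rely on that).
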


\begin{proof}
 Existence of definitions is characteristic of stability; earlier sections built weak definitions for types in the random graph and in 
arbitrary simple theories, respectively. 
\end{proof}

We may summarize by noting that in each case, the contribution of weak definability was to prove a lemma of the following kind.  
 
\begin{mlem} \label{mlem1}
Suppose $I \in \mk$, $M = \GEM_{\tau(T)}(I, \Phi)$ and 
\[ p(x) = \{ \vp_\alpha(\bar{x}, \bar{\sigma}^M_\alpha(\bar{a}_{\bar{t}_\alpha})) : \alpha < \kappa \} \]
is a consistent partial type in $M$. If $p$ has a weak definition, then for any $\aleph_0$-saturated 
$J$ with $I \subseteq J \in \mk$, 
the set of formulas 
\begin{align*}
\label{eq:qc} q(x) = \{ \vp_\alpha(\bar{x}, \bar{\sigma}^M_\alpha(\bar{a}_{\bar{s}})) : \alpha < \kappa, ~
\tpqf(\bar{s}, \emptyset, J) = 
\tpqf(\bar{t}_\alpha, \emptyset, I) \} 
\end{align*}
is a consistent partial type in $N$. So we may realize it in some elementary extension $N^\prime$ of $N$ 
and name this realization by new constants $\bar{c}$, and applying $\ref{d:ramsey-exp}$, 
we may find $\Psi \geq \Phi$ such that $\bar{c} \subseteq \tau(\Psi)$ 
and $p$ is realized by $c$ in $N = \GEM(I, \Psi)$. 
\end{mlem}

\begin{disc} \emph{Our proofs have suggested that a productive way of comparing theories may be to find, 
in the setup of $\GEM$-models, a class $\mk$
for which types in one theory $T_0$ have weak definitions, and those in another $T_1$ do not.}
\end{disc}

\section{Some open problems} \label{s:op}

We conclude with some open problems. 
The careful reader may also have noticed many natural questions which we have not addressed here, 
for example 
to extend Lemma \ref{l:trg} (and the analogous proof for $\tlf$) to show that all stable theories are $\tlf^*$-below all unstable theories.

Note that no equivalence classes of unstable theories under $\tlf^*$ have been characterized in ZFC (though maximality 
uses only instances of GCH) 
and any result along these lines could potentially be very interesting. 

\br
Towards understanding $\tlf^*$ on the simple unstable theories, for $\kappa = 1$ or $\kappa = \aleph_1$:
\begin{enumerate}
\item Characterize those theories which are $\tlf^*_\kappa$-equivalent to the theory of the random graph. 

\item Are there infinitely many incomparable classes of simple unstable theories under $\tlf^*_\kappa$? 

\item Is it true that every simple theory is $\tlf^*_\kappa$-below every non-simple theory?

\end{enumerate}

Towards understanding $\tlf^*$ on the non-simple theories with $NSOP_2$: 

\begin{enumerate}

\item[(4)] \label{p:nsop2} Prove Fact $\ref{844-fact}$ in ZFC, which would establish in ZFC that a theory is maximal in $\tlf^*_{\aleph_1}$ if and only if it is $SOP_2$. 

\item[(5)] Characterize those theories which are $\tlf^*_\kappa$-equivalent to $\tfeq$.  

\item[(6)] Is there a property of non-simple theories, which is analogous in a natural sense 
to f.c.p. in stable theories and to non-lowness in simple unstable theories, and is detected as a division in $\tlf^*_\kappa$? 

\end{enumerate}

\section*{Appendix: On incomparability}  \label{appendix-b}

\noindent Continuing \S \ref{s:incomp}, we prove Theorem A following our earlier unpublished proof. 
We encourage the reader interested in incomparability to also read Ulrich's proof \cite{ulrich}; the core 
mechanism is the same, but one learns different things from different people.

We will use the following key ingredients of the proof of infinitely many classes from \cite{MiSh:1050}.  
The reader may take the properties of ``optimized'' and ``perfected'' to be black boxes. Such ultrafilters were defined 
and proved to exist in \cite{MiSh:1030} \S 5 and \S 9, respectively.\footnote{for so-called ``suitable'' cardinals $\lambda \geq \mu \geq \theta \geq \sigma$: 
defined in \cite{MiSh:1030} Definition 1.1.} Note that existence proof for optimized ultrafilters and 
uncountable $\sigma$ from \cite{MiSh:1030} assumed $\sigma$ supercompact. This is probably more that is needed, however, 
the existence result uses that a certain ultrafilter $\de_*$ is $\sigma$-complete.

\begin{thm-lit}[\cite{MiSh:1050}
\footnote{Theorem 6.1. The non-saturation result depends only on $\lambda$ and $\mu$, see Remark 5.2 there.} 
] \label{t:p2a}
Let $(\lambda, \mu, \theta, \sigma)$ be suitable. 
Suppose $\mu = \aleph_\alpha$ and $\lambda = \aleph_{\alpha + \ell}$ for $\alpha$ an ordinal and $\ell$ a nonzero integer. 
Suppose that either:
\begin{enumerate}
\item[(i)] $\theta = \sigma = \aleph_0$ and $\de$ is a $(\lambda, \mu)$-perfected ultrafilter on $\lambda$.
\item[(ii)] $\theta \geq \sigma > \aleph_0$, so $\sigma$ is supercompact, 
\\ and $\de$ is a $(\lambda, \mu, \theta, \sigma)$-optimized ultrafilter on $\lambda$.
\end{enumerate}
Then for any $2 \leq k < \omega$: 
\begin{enumerate}
\item[(a)] If $k < \ell$,  $\de$-ultrapowers of models of $T_{k+1, k}$ are not $\lambda^+$-saturated. 
\item[(b)] If $\ell < k$ and (i), $\de$-ultrapowers of models of $T_{k+1, k}$ are $\lambda^+$-saturated.\footnote{Our 2015 manuscript 
states (b) in full generality:  if $\ell < k$, $\de$-ultrapowers of models of $T_{k+1, k}$ are $\lambda^+$-saturated. This requires some thought  
beyond \cite{MiSh:1050} Theorem 6.1 on the reader's part, e.g. verifying that \cite{MiSh:1050} gives explicit simplicity for $T_{k+1,k}$ and then applying \cite{MiSh:1030} Theorem 7.3.}

\end{enumerate} 
\end{thm-lit}

\begin{claim} \label{sat-low}
If $\sigma > \aleph_0$ $($so $\de_*$ is $\sigma$-complete$)$, $\de$ is flexible and is good for the random graph\footnote{Flexible by  
\cite{MiSh:1030} 5.16, good for the random graph by \cite{MiSh:1030} Theorem 7.3 as the random graph trivially satisfies 
the condition of being explicitly simple, see e.g. \cite{MiSh:1030} Discussion 3.14.} and 
$\de$ is good for $T_*$. 
\end{claim}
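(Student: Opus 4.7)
The plan is to prove the third clause of the claim (the first two are cited in the footnote) by bootstrapping goodness for $T_*$ from the already-established goodness for the random graph, using the triviality of forking in $T_*$ together with the flexibility of $\de$ and the $\sigma$-completeness of the base ultrafilter $\de_*$. The standard translation says that $\de$ is good for $T_*$ iff every consistent $T_*$-type distribution $d\colon[\lambda]^{<\aleph_0}\to\de$ admits a multiplicative refinement, so the task is to exhibit such a refinement for an arbitrary type $p(x)$ over $\lambda$-many parameters in a $\de$-ultrapower.

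First I would invoke the reduction, available because $T_*$ is simple with trivial forking, that a complete $p(x)$ with $P(x)$ is determined by its restriction to $R$-formulas together with the $E$-data on the parameter set. By the independence theorem in $T_*$, realizing $p$ reduces to the following: given pairwise $E$-inequivalent $\langle b_\alpha : \alpha<\lambda\rangle$ in $Q^{M^\lambda/\de}$ and a ``consistent'' sign pattern $\eta\in 2^\lambda$ (meaning the positive coordinates pick out, for each class, exactly as many elements as that class permits via the bipartite degree constraint), produce $a\in P^{M^\lambda/\de}$ with $R(a,b_\alpha)^{\eta(\alpha)}$ for all $\alpha$. The ``graph'' half of this requirement is then absorbed by goodness of $\de$ for the random graph: the distribution sending finite $u\subseteq\lambda$ to the \L os set for $\{R(x,b_\alpha)^{\eta(\alpha)}:\alpha\in u\}$ has a multiplicative refinement inside the reduct to $\{R\}$, yielding for each $\alpha$ a set $X_\alpha\in\de$ on which the graph part coheres.

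The main obstacle, and the reason both flexibility and $\sigma$-completeness enter, is the size constraint encoding non-lowness: the pattern $\eta$ demands \emph{exactly} $n$ positive coordinates in the $n$-th $E$-class as $n$ ranges over $\omega$. The hard part will be turning the per-class finite selections into a single multiplicative refinement of the full distribution. My plan is to use flexibility to select, inside each $X_\alpha$, a pseudofinite subset realizing the correct degree on the relevant class (this is exactly the use of flexibility familiar from non-low theories), and then invoke $\sigma$-completeness of $\de_*$ to intersect the $\sigma$-indexed stream of such refinements into one coherent family --- the optimized construction of $\de$ from \cite{MiSh:1030} transmits $\sigma$-completeness of $\de_*$ into precisely this kind of coordinated-intersection property on $\de$. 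I expect the resulting argument to run parallel to the positive half of Theorem \ref{t:p2a} with the hypergraph-completion step there replaced by the ``pick the right $n$-subset of the $n$-th class'' step here, so that the proof reduces to checking that the per-formula moves already licensed by the two cited properties of $\de$ can be concatenated by the independence theorem inside $T_*$.
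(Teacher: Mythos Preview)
Your reduction to positive $R$-instances via goodness for the random graph matches the paper's first move. After that the arguments diverge, and your plan has a genuine gap. The paper does \emph{not} use flexibility to prove goodness for $T_*$; flexibility is one of the three conclusions of the claim (dispatched by citation in the footnote), not an input to the third. Your proposal to ``use flexibility to select, inside each $X_\alpha$, a pseudofinite subset realizing the correct degree'' does not correspond to any concrete move --- flexibility controls regularizing families below a nonstandard integer, not which $n$-element subset of a given $E$-class a realization should connect to --- and you give no indication of how this would produce a multiplicative refinement.

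The idea you are missing is the paper's pivot through a \emph{countable model $M_*$ over which $p$ does not fork}. Since $T_*$ is simple, such $M_*$ exists and may be taken to contain the prime model; then $p\rstr M_*$ already records, for each standard $n$, exactly which $n$ elements $a_{\langle i,n\rangle}$ ($i<n$) of the $n$th $E$-class the realization connects to. The payoff is that every \emph{other} positive parameter $a^*_\alpha$ (with $\alpha$ not of the form $\langle j,n\rangle$) must lie in a nonstandard $E$-class, which is witnessed by the countably many conditions $\neg E(a^*_\alpha,a_{\langle i,n\rangle})$ as $(i,n)$ ranges over $i<n<\omega$. The paper packages these into the continuous sequence $\langle \mb_u : u\in[\lambda]^{<\sigma}\rangle$, each $\mb_u$ being an intersection of at most $|u|\cdot\aleph_0<\sigma$ many \L o\'s sets, and this is exactly where $\sigma>\aleph_0$ enters via the optimized-ultrafilter machinery. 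Your plan never isolates the standard-class data this way, so the step ``turn the per-class finite selections into a single multiplicative refinement'' has no foundation: without the countable-model reduction you have no bound on how many positive parameters fall into each standard class, and neither flexibility nor the independence theorem supplies one.
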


\begin{proof}
Let $M \models T_*$, $p \in \ts(N)$ where $N \preceq M^I/\de$ and $||N|| \leq \lambda$. 

Let $\langle \vp(x,a^*_\alpha) : \alpha < \lambda \rangle$ be an enumeration of $p$. 
In the main case we may assume each $\vp$ is of the form 
$R(x,a^*_\alpha)^{\trv(\alpha)}$.  
As $\de$ is good for the random graph, it will suffice to consider the case where each $\trv(\alpha) = 1$. 
Let $M_*$ be a countable model over which $p$ does not fork.  So $M_*$ contains the prime model, and $p \rstr M_*$ 
includes the data of which $n$ elements $p$ connects to in the $n$th class of $E$, for each finite $n$. Denote these
by $\{ a_{\langle i,n \rangle} : i < n, n < \omega \}$, where $\langle ~ \rangle$ is some fixed coding function from $\omega \times \omega$ 
to $\omega$, and assume that each $a_{\langle i, n \rangle}$ is $a^*_\alpha$ for $\alpha = \langle i, n \rangle < \omega$. 
Let $\langle \mb_u : u \in [\lambda]^{<\sigma} \rangle$ 
be the continuous sequence given by 
\[ \mb_u = \bigcap_{\alpha \in u, i < n < \omega, \bigwedge_{j < n}\alpha \neq \langle j,n \rangle}  \ma_{\neg(E(x_\alpha, x_{\langle i,n \rangle}))}. \] 
With this sequence we may realize $p$. 
\end{proof}

\begin{defn} 
Let $\mct_n$ be the theory given by the disjoint union of $T_{k+1,k}$ for $k \geq n$. 
Let $\mct^*_n$ be the theory given by the disjoint union of $\mct_n$ and $T_*$.  
\end{defn}

\begin{claim} \label{c:upgrade}
Let $(\lambda, \mu, \theta, \sigma)$ be suitable. 
Suppose $\mu = \aleph_\alpha$ and $\lambda = \aleph_{\alpha + \ell}$ for $\alpha$ an ordinal and $\ell$ a nonzero integer. 
Suppose that $\theta \geq \sigma > \aleph_0$, so $\sigma$ is supercompact, 
\\ and $\de$ is a $(\lambda, \mu, \theta, \sigma)$-optimized ultrafilter on $\lambda$.
Then for any $2 \leq k < \omega$: 
\begin{enumerate}
\item[(a)] If $k < \ell$, then $\de$-ultrapowers of models of $T_{k+1, k}$ are not $\lambda^+$-saturated, \emph{moreover} 
the same is true for $\mct^*_k$. 

\item[(b)] If $\ell < k$, then $\de$-ultrapowers of models of $\mct_*$ are $\lambda^+$-saturated.
\footnote{Continuing 
footnote 18, our manuscript 
had the more general statement: $\ell < k$, then $\de$-ultrapowers of models of $\mct_*$ are $\lambda^+$-saturated, 
\emph{moreover} the same is true for $\mct^*_k$.  Likewise, $\mct_*$ was the more general $\mct^*_n$ in the main incomparability result. 
Notice this gives some interesting additional information, namely, that assuming existence of a supercompact cardinal, 
there is an infinite descending chain of non-low simple theories in Keisler's order.}
\end{enumerate} 
\end{claim}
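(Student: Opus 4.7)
The strategy is to reduce both parts of Claim \ref{c:upgrade} to the corresponding results for single component theories ($T_{k+1,k}$ in part (a); each $T_{j+1,j}$ for $j \geq k$ together with $T_*$ in part (b)) and to exploit the disjoint-union-in-disjoint-signatures structure of $\mct^*_k$ and $\mct_*$. Two standard facts will drive the argument: ultrapowers commute with reducts, and, in a disjoint union over pairwise disjoint signatures, $\lambda^+$-saturation of the full ultrapower is equivalent to $\lambda^+$-saturation of each component reduct, since types split canonically along the signature partition and partial realizations glue because the components do not interact.

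For part (a), assume $k < \ell$. Let $M \models \mct^*_k$ and $N = M^\lambda/\de$. By commutation with reducts, $N \rstr \tau(T_{k+1,k}) \cong (M \rstr \tau(T_{k+1,k}))^\lambda/\de$, and Theorem \ref{t:p2a}(a) (applicable under either (i) or (ii)) gives that this reduct is not $\lambda^+$-saturated. Since a type omitted in a reduct is automatically omitted, regarded as a type in the larger language, in the whole structure, we conclude $N$ is not $\lambda^+$-saturated. This step is essentially formal and presents no obstacle.

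For part (b), assume $\ell < k$. We must verify saturation reduct-by-reduct and then combine. For each $j > \ell$, the $T_{j+1,j}$-reduct of the ultrapower is $\lambda^+$-saturated by the form of Theorem \ref{t:p2a}(b) extended to the optimized case, as explained in the footnote: one extracts explicit simplicity of $T_{j+1,j}$ from \cite{MiSh:1050} and then invokes \cite{MiSh:1030} Theorem 7.3 that optimized ultrafilters are good for explicitly simple theories. For the $T_*$-reduct, Claim \ref{sat-low} applies: its hypotheses are met because $\sigma > \aleph_0$ (so $\de_*$ is $\sigma$-complete), $\de$ is flexible by \cite{MiSh:1030} 5.16, and $\de$ is good for the random graph via \cite{MiSh:1030} Theorem 7.3 applied to the trivially explicitly simple random graph theory. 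Once each reduct is known to be $\lambda^+$-saturated, saturation of the whole follows by the disjoint-signature decomposition: a type $p$ over a small set $A \subseteq N$ in $\mct_*$ decomposes as a union $\bigcup_i p_i$ of types in each component signature (restricting formulas sort-by-sort or predicate-by-predicate); each $p_i$ is realized in its reduct by the separate saturation statements, and because the signatures are pairwise disjoint, these realizations are mutually compatible and together realize $p$.

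The main obstacle lies inside part (b): tracing carefully the upgrade of Theorem \ref{t:p2a}(b) from hypothesis (i) to hypothesis (ii), i.e., from perfected to optimized ultrafilters. This requires verifying that the arguments of \cite{MiSh:1050} do yield explicit simplicity for each $T_{k+1,k}$ in the form demanded by \cite{MiSh:1030} Theorem 7.3, and then reading off $\lambda^+$-saturation of $T_{k+1,k}$-ultrapowers by $(\lambda,\mu,\theta,\sigma)$-optimized $\de$. The reduct-and-glue step and part (a) are each routine once this upgrade is in hand.
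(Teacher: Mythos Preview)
Your approach is essentially the paper's own: the proof there reads simply ``By Claim \ref{sat-low} and Theorem \ref{t:p2a},'' and your reduct-by-reduct argument is exactly how one unpacks that citation. Part (a) is handled just as you say.

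One clarification on part (b): in the published statement, $\mct_*$ appears to denote simply the non-low theory $T_*$ (cf.\ the footnote, which says the more general version with $\mct^*_k$ was in the earlier manuscript but not here). Under that reading, part (b) follows immediately from Claim \ref{sat-low}, and the ``main obstacle'' you identify---upgrading Theorem \ref{t:p2a}(b) from perfected to optimized ultrafilters via explicit simplicity of the $T_{j+1,j}$---is not needed for the claim as stated. That upgrade is exactly what the footnotes flag as requiring additional thought beyond a direct citation, and it is only relevant if you want the stronger conclusion about $\mct^*_k$. So your proof is correct and in fact proves more than is asked; for the statement as written, you can drop the $T_{j+1,j}$ discussion in (b) entirely.
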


\begin{proof}
By Claim \ref{sat-low} and Theorem \ref{t:p2a}.  
\end{proof}

\begin{proof}[Proof of Theorem A]
If $\sigma = \theta = \aleph_0$, $\lambda \leq \mu^{+k}$ and $\de$ is $(\lambda, \mu)$-perfected, then $\de$ is good for 
$\mct_k$. However, $\de$ is not flexible,\footnote{by \cite{MiSh:999} Corollary 9.9.} so it is not good for $\mct_*$. 
If $\sigma = \theta$ is an uncountable supercompact cardinal, but $\lambda = \mu^{+k+1}$, then $\de$ is good for 
$\mct_*$ 
by \ref{c:upgrade}(b), but it is not good for $\mct_{k^\prime}$ for any $k^\prime \leq k$ by \ref{c:upgrade}(a). 
\end{proof}

\end{document}